\tikzset{
    circ/.style={draw, circle,inner sep=0pt,minimum size=8mm, font=\scriptsize},
    triangle/.tip={Computer Modern Rightarrow[open,angle=120:3pt]}
}
\newtheorem{theorem}{Theorem}%[section]
\newtheorem{corollary}[theorem]{Corollary}%[section]
\newtheorem{lemma}[theorem]{Lemma}%[section]
\newtheorem{proposition}[theorem]{Proposition}%[section]
\newtheorem{remark}[theorem]{Remark}%[section]
\def\P{{\mathbb P}}
\newcommand{\A}{\mathcal{A}}
\newcommand{\st}{\mathcal{S}}
\newcommand{\pcons}{\mathcal{C}^+}
\newcommand{\ncons}{\mathcal{C}^-}
\newcommand{\ladder}{\mathcal{L}}
\newcommand{\pladder}{\mathcal{L}^+}
\newcommand{\nladder}{\mathcal{L}^-}
\newcommand{\str}{\hat{\mathcal{S}}}
\newcommand{\strength}{\alpha}
\newcommand{\pladderr}{\hat{\mathcal{L}}^{+,\strength}}
\newcommand{\mydef}{:=}
\title{\textbf{\Large{Fast Consensus and Metastability \\  in a Highly Polarized
     Social Network}} \\ \vspace{0.5cm} \textbf{ {\normalsize Dedicated to the memory of Antonio Galves}}}
\author{ 
Antonio Galves\footnote{Deceased September 5, 2023.} \and \stepcounter{footnote} \stepcounter{footnote} \stepcounter{footnote} \stepcounter{footnote} \stepcounter{footnote} 
Kádmo  Laxa \footnote{email: kadmo.laxa@usp.br}
}
\date{August, 2024}
\begin{document}

\maketitle

\begin{abstract}
A polarized social network is modeled as a system of interacting marked point processes with memory of variable length. Each point process indicates the successive times in which a social actor expresses a "favorable"  or "contrary" opinion. After expressing an opinion, the social pressure on the actor is reset to 0, waiting for the group’s reaction. The orientation and the rate at which an actor expresses an opinion is influenced by the social pressure exerted on it, modulated by a polarization coefficient. We prove that the network reaches an instantaneous but metastable consensus, when the polarization coefficient diverges.
 \vspace{0.2cm}
 
\textit{Keywords}: Interacting marked point processes with memory of variable length, metastability, fast consensus, social networks.
 
 \vspace{0.2cm}
 
 \textit{AMS MSC}: 60K35, 60G55, 91D30. 
\end{abstract}

%% main text
\section{Introduction}
\label{cap:introducao}

Huge discrepancies between the results of electoral intentions carried out a few days before the actual voting and the electoral poll results during the first rounds of the 2018 and 2022 presidential elections in Brazil were striking
(see for instance \cite{exame}, \cite{bbc}, \cite{nyt} and \cite{guardian}).

It was conjectured that these discrepancies in the Brazilian elections were due to 
campaigns in support of a group of candidates launched on social media a few days before the elections (see for instance \cite{conversation} and \cite{mello1,mello2}).
This conjecture raises a question: is social media campaigning enough to change in a quite short period of time the voting intention of a significant portion of voters? 

To address this question, we introduce a new stochastic model that mimics some important features of real world social networks. This model can be informally described as follows. 
 
\begin{enumerate}
    \item The model
is a system with interacting marked point processes. 

\item Each point process indicates the successive times in which a social actor expresses either a ``favorable''  $(+1)$ or ``contrary'' $(-1)$  opinion on a certain topic.

\item When an actor expresses an opinion, social pressure on it is reset to 0, and simultaneously, social pressures on all the other actors change by one unit in the direction of the opinion that was just expressed.

\item The orientation and the rate at which an actor expresses an opinion is influenced by the social pressure exerted on it, modulated by a polarization coefficient. The greater the value of the polarization coefficient, the greater the speed in which actors express opinions and the greater the tendency of each actor to express opinions in the same direction of its social pressure.

\end{enumerate}

In this model, the actors listen to and influence each other. Thus, after expressing an opinion, each actor waits for the group's reaction to the opinion it has just expressed. It is this reaction that will guide the actor's next manifestation. This is the content of the third point presented above. The resetting of the social pressure on the actor every time it expresses an opinion makes the point processes describing the activity of the actor to have a memory of variable length.
In addition, the fact that each actor is always willing to accept the opinion of the other actors makes our social network model a kind of ``consensus building machine''. We will come back to this last point in the Discussion Section at the end of this article.

Starting with the classical voter model, introduced by \cite{votermodel}, several articles addressed issues associated to opinion dynamics in a social network. See \cite{wasserman1,socialdynamics} and \cite{aldous}  for a general review on this subject.
However, to the best of our knowledge, a model with the features introduced here was not considered yet in the literature of social networks.

The model introduced here belongs to the same class of systems of interacting point processes with memory of variable length that was introduced in discrete time 
by \cite{glmodel} 
and in continuous time by 
\cite{gl4} to model systems of spiking neurons. For a self-contained and neurobiological motivated presentation of this class of variable length memory models for systems of spiking neurons, both
in discrete and continuous time, we refer the reader to \cite{gb}.

Several questions have been studied for models in this class, including metastability \cite{mandre1,mandre2,taille,evamonm,mandre3,laxa}, phase transition \cite{lud,gl2, amarcos}, hydrodynamic limits and propagation of chaos \cite{gl4, ostduarte2,evafour}, 
time evolution and stationary states \cite{ostduarte1,lebow},
replica-mean-field limits \cite{baccelli1,gl3,baccelli2,baccelli3}, existence and perfect simulation \cite{glmodel, karina} and statistical inference of the graph of interaction between neurons \cite{ostduarte3,desantis}. However, none of these articles addressed the consensus issue considered here.

A main difference between the model considered in the present article and the models considered in the articles mentioned above is the fact that in our case we consider marked point processes, each mark indicating whether the expressed opinion is favorable or contrary on a certain topic. A second important difference lies in the presence of a polarization coefficient tuning the social influence on the opinion expressed by each actor.
These specific characteristics make it possible to study the constitution of consensus situations. Our model, results and proofs are new and original.

Let us now informally present our results. Our model is described by the time evolution of the list containing the social pressures of the actors. The existence of the process and the uniqueness of its invariant probability measure is the content of Theorem \ref{existence}.

When the polarization coefficient diverges,
the invariant probability measure concentrates on a subset of the set of consensus lists and the time the system needs to get there goes to zero. Here by a consensus list, we mean any list in which all the social pressures push in the same direction. This is the content of 
Theorem \ref{fastconsensus}, which gives a mathematical rigorous meaning of the informal description of a social network as a ``consensus building machine''.

In the social network, the consensus has a metastable behavior. This means that the direction of the social pressures on the actors globally changes after a long and unpredictable random time. This is the content of Theorem \ref{metastable}.
 
The notion of metastability considered here is inspired by the so called \textit{pathwise approach to metastability}
introduced by \cite{cassandro}. 
For more references and an introduction to the topic, we refer the reader to \cite{metaest2,metaest3} and \cite{metaest1}.

The original motivation of this article was to provide a mathematical framework to model global changes of the voting intention. The model with the four features informally described above accounts for the fast constitution of consensus in our model of social network. But something is missing in the model, namely the effect of an external influencer campaigning to push the global orientation of the social network in a certain direction. This can be done by adding the following fifth feature to the model.

\begin{itemize} 
\item[5.] A robot may be present in the system. A robot behaves as an actor who always expresses the same opinion with the same time rate. This time rate 
increases exponentially with the polarization coefficient and with a strictly
positive parameter describing the ``strength'' of the robot.  
\end{itemize}

For the model of social network with external influence, let us suppose that the strength of the robot is sufficiently big. Then, as the polarization coefficient diverges, the invariant probability measure of the system concentrates on a subset of the set of consensus lists oriented in the direction in which the robot campaigns. Moreover, the system reaches this subset very fast. This is the content of Theorem \ref{fastconsensus2}.

This article is organized as follows. In Section \ref{cap:model} we define the model of a social network without external influence and state the main results for this model. In Section \ref{cap:modelrobot} we define the model of a social network with a robot and state the theorem describing how the robot affects the behavior of the social network.
In Section \ref{sec:auxiliary} we introduce some extra notation and prove two auxiliary propositions. 
In Sections \ref{sec:existence}, \ref{sec: fastconsensus}, \ref{cap:metastability}, and \ref{sec:robot}, we prove Theorems \ref{existence}, \ref{fastconsensus}, \ref{metastable}, and \ref{fastconsensus2} respectively.

\section{Definitions, notation and main results}
\label{cap:model}

Let $\A=\{1,2,...,N\}$ be the set of social actors, with $N \geq 3$, and let $\mathcal{O}=\{-1,+1\}$ be the set of opinions that an actor can express, where $+1$ (respectively, $-1$) represents  a \textit{favorable} opinion (respectively, a \textit{contrary} opinion).

Let $\beta \geq 0$ be the polarization coefficient of this network. The polarization coefficient of the network parametrizes the tendency of each actor $a \in \A$ to follow the social pressure that the actors belonging to $\A \setminus \{a\}$ exert on $a$.

A list of social pressures $u=(u(a): a \in \A)$ is a list in which $u(a)$ is an integer number indicating the social pressure of actor $a \in \A$. To describe the time evolution of the social network we introduce a family of maps on the set of lists of social pressures. For any actor $a \in \A$, for any opinion $\mathfrak{o} \in \mathcal{O}$ and for any list of social pressures $u=(u(a): a \in \A)$, 
we define the new list $\pi^{a,\mathfrak{o}}(u)$ as follows. For all $b \in \A$, 
$$
\pi^{a,\mathfrak{o}}(u)(b)\mydef
\begin{cases}
u(b)+\mathfrak{o}, \text{ if } b\neq a, \\
0, \text{ if } b  = a.
\end{cases}
$$

The time evolution of the social network can be described as follows.
\begin{itemize}
    \item Assume that at time $0$, the list of social pressures exerted on the actors is $u=(u(a): a \in \A)$.
    \item Independent exponential random times with parameters $\exp\left(\beta \mathfrak{o} u(a)\right)$ are associated to each actor $a \in \A$ and each opinion $\mathfrak{o} \in \mathcal{O}$.

    \item Denote $(A_1,O_1)$ the pair (actor, opinion) associated to the exponential random time that occurs first.
    \item At this random time, the list of social pressures changes from $u$ to $\pi^{A_1,O_1}(u)$.
    
    \item At the new list of social pressures $\pi^{A_1,O_1}(u)$, independent exponential random times  with parameters $\exp\left(\beta \mathfrak{o} \pi^{A_1,O_1}(u)(a)\right)$  are associated to each actor $a \in \A$ and opinion $\mathfrak{o} \in \mathcal{O}$. 
    
    \item Denote $(A_2,O_2)$  the pair (actor, opinion) associated to the exponential random time that occurs first, and so on.
\end{itemize}

Note that for any $\beta >0$ and $u(a) \neq 0$, $\exp\left(\beta \mathfrak{o} u(a)\right)>1$ when the signs of $\mathfrak{o}$ and $ u(a)$ are equal and $\exp\left(\beta \mathfrak{o} u(a)\right)<1$ in the opposite case. When $\beta=0$, the social network is a system with independent components in which actors express either opinion $+1$ or $-1$ with equal probabilities (each actor waits an exponential random time with rate $2$ to express an opinion)
regardless of the social pressure exerted on them.  

Let $(T_n: n\geq 1)$ be the cumulative sums of the successive random times realizing the successive minima and let $((A_n,O_n):n\geq 1)$ be the sequence of pairs (actor, opinion) associated to them. For each $n\geq 1$, $T_n$ is the sum of $n$ exponentially distributed random variables.
Let also $(U_t^{\beta,u})_{t\in [0,+\infty)}$ be the time evolution of the list containing the social pressure of the actors,  starting with the list $u$, defined as follows 
$$
U_t^{\beta,u}\mydef u \text{, if } 0\leq t <T_1,
$$
and for any $t\geq T_1$,
$$
U_t^{\beta,u} \mydef \pi^{A_m,O_m}(U^{\beta,u}_{T_{m-1}}) \text{, if } T_{m}\leq t <T_{m+1},
$$
where $T_0=0$. 

So defined, the time evolution of the list of social pressures $(U_t^{\beta,u})_{t\in [0,+\infty)}$ is a Markov jump process taking values in the set
$$
\st \mydef \{u=(u(a): a \in \A) \in \mathbb{Z}^N:\min\{|u(a)|:a \in \A\}=0\}
$$
for any initial list $u \in \st$ and with infinitesimal generator defined as follows
\begin{equation} \label{generator}
\mathcal{G}f(u)\mydef \sum_{\mathfrak{o} \in \mathcal{O}}\sum_{b\in \A}\exp\left(\beta \mathfrak{o} u(b)\right)\left[f(\pi^{b,\mathfrak{o}}(u))-f(u)\right],
\end{equation}
for any  bounded function $f:\st \to \mathbb{R}$. Note that $U_t^{\beta,u} \in \st$ for any $t\geq 0$ and for any initial list $u \in \st$ since the actor that expressed an opinion most recently at each instant has null social pressure.

The opinion dynamics of the social network is described either by the system of interacting marked point processes $((T_n,(A_n,O_n)):n\geq 1)$ together with the initial list of social pressures $U_0^{\beta,u}=u$, or by the time evolution of the list of social pressures $(U_t^{\beta,u})_{t\in [0,+\infty)}$. 

The process $(U_t^{\beta,u})_{t\in [0,+\infty)}$ is well defined for any 
$t \in [0, \sup\{T_m:m\geq 1\})$. The unique thing that must yet be clarified is whether this process is defined for any positive time $t$, i.e. if $\sup\{T_m:m\geq 1\}=+\infty$ or not. This is part of the content of the first theorem.

\begin{theorem} \label{existence}
For any $\beta\geq 0$ and for any starting list $u \in \st$, the following holds.
\begin{enumerate}
    \item The sequence $(T_m: m\geq 1)$ of jumping times of the process $(U_t^{\beta,u})_{t \in [0,+\infty)}$ satisfies
    $$
    \P(\sup\{T_m: m\geq 1\}=+\infty)=1,
    $$ 
    which assures the existence of the process for all time $t\in [0,+\infty)$. 
    
    \item The process $(U_t^{\beta,u})_{t\in [0,+\infty)}$ 
    has a unique invariant probability measure $\mu^{\beta}$. 
    \end{enumerate}
    
\end{theorem}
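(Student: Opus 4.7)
The plan is to derive both claims from a Lyapunov estimate on $V(u) := \lnorm u \rnorm_1 = \sum_{a \in \A} |u(a)|$, combined with a direct irreducibility argument on the countable state space $\st$. The key structural observation behind the estimate is that under a jump $u \mapsto \pi^{b,\mathfrak{o}}(u)$ the coordinate $b$ is reset to $0$ while each of the other $N-1$ coordinates changes in absolute value by at most one, so
$$
V(\pi^{b,\mathfrak{o}}(u)) - V(u) \leq -|u(b)| + (N - 1).
$$
Multiplying by the rate $\exp(\beta \mathfrak{o} u(b))$, summing over $(b,\mathfrak{o}) \in \A \times \opn$ and pairing the two opinions, I get
$$
\GG V(u) \leq 2 \sum_{b \in \A} (N - 1 - |u(b)|)\, \cosh(\beta u(b)).
$$
Each summand with $|u(b)| \leq N-1$ is bounded above by $2(N-1)\cosh(\beta(N-1))$, while each with $|u(b)| \geq N$ is nonpositive; hence $\GG V$ is bounded above by the finite constant $C_0 := 2N(N-1)\cosh(\beta(N-1))$ on all of $\st$. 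Since $V$ is coercive on $\st$, the standard Lyapunov non-explosion criterion for countable-state jump processes then yields $\P(\sup_m T_m = +\infty) = 1$, proving claim (1).

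For claim (2), the same computation produces a sharper Foster drift. Choosing $M_0 = M_0(N,\beta)$ large enough that $2(M_0 - N + 1)\cosh(\beta M_0) \geq C_0 + 1$ and setting $K := \{u \in \st : \max_a |u(a)| < M_0\}$ (a finite subset of $\st$), one obtains
$$
\GG V(u) \leq -1 + (C_0 + 1)\, \mathbf{1}_K(u),
$$
since whenever $u \notin K$ some index $b$ with $|u(b)| \geq M_0$ produces a negative term large enough to absorb the positive contributions from the other indices. This is the classical Foster--Lyapunov drift condition for positive recurrence, which combined with irreducibility yields the existence and uniqueness of $\mu^\beta$. To establish irreducibility, note that every transition $u \mapsto \pi^{b,\mathfrak{o}}(u)$ has strictly positive rate and that the composition $\pi^{a,-1} \circ \pi^{a,+1}$ sends $u(a)$ to $0$ while leaving every other coordinate unchanged; iterating this operation over all $N$ actors drives any $u \in \st$ to $\mathbf{0}$ in $2N$ jumps, and the reverse direction $\mathbf{0} \to v$ for an arbitrary $v \in \st$ can be handled by an explicit induction, for instance on $\lnorm v \rnorm_1$.

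I expect the main obstacle to be the $\mathbf{0} \to v$ step of the irreducibility argument: because the moves $\pi^{b,\mathfrak{o}}$ do not commute and the constraint $\min_a |u(a)| = 0$ must hold at every intermediate state, writing down an explicit sequence of jumps taking $\mathbf{0}$ to an arbitrary $v$ requires a careful coordinate-by-coordinate construction. The Lyapunov computation itself, by contrast, is essentially immediate once the one-step bound on $V(\pi^{b,\mathfrak{o}}(u)) - V(u)$ has been isolated.
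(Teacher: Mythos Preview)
Your approach is correct and genuinely different from the paper's. For non-explosion, the paper does not use a Lyapunov function: it proves a combinatorial lemma stating that within any window of $N$ consecutive jumps at least one is made by an actor whose pressure has absolute value below $N$; since the total rate of such ``bounded'' jumps is at most $\lambda = N(e^{\beta(N-1)}+e^{-\beta(N-1)})$, an explicit coupling with a planar rate-$1$ Poisson process shows these jump times cannot accumulate, and the lemma forces the remaining jump times to be interspersed among them. For positive recurrence the paper again avoids drift, showing instead that from any state the embedded chain reaches a fixed ladder $l$ in $2N$ steps with probability bounded below uniformly in the starting point; this gives a geometric tail for the return time to $l$, and the uniform lower bound $q_\beta(v)\geq 2N$ on the total jump rate then yields ergodicity of the continuous-time process. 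Your Lyapunov route is shorter and more textbook; the paper's constructive hitting bound, on the other hand, is reused later in the proofs of Theorems~\ref{fastconsensus} and~\ref{metastable}.

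One remark on the obstacle you flag: for \emph{uniqueness} of $\mu^\beta$ you do not need the $\vec{0}\to v$ direction. Your observation that $\pi^{a,-1}\circ\pi^{a,+1}$ zeros coordinate $a$ without disturbing the others already shows $u\to\vec{0}$ for every $u\in\st$, so every closed (in particular every recurrent) communicating class must contain $\vec{0}$; since the Foster drift guarantees at least one recurrent class, there is exactly one, and uniqueness follows. The reverse direction is only needed if you also want $\mu^\beta$ to have full support, and can be obtained by the elementary move that, for $a$ with $u(a)=0$, the composition $(\pi^{a,+1})^{k+1}\circ\pi^{b,-1}\circ\pi^{b,+1}\circ(\pi^{a,-1})^{k+1}$ increments $u(b)$ by one while fixing all other coordinates (with $k=|u(b)|$), every intermediate state having a zero at $a$ or $b$.
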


We define the set of positive (respectively negative) consensus lists as
$$
\pcons\mydef\{u \in \st: u \neq \vec{0}, u(a) \geq 0 \text{, for all } a \in \A\}
$$
and 
$$
\ncons\mydef\{u \in \st: u \neq \vec{0}, u(a) \leq 0 \text{, for all } a \in \A\},
$$
where $\vec{0} \in \st$ is the null list. 
Consider also the set of positive and negative ladder lists
$$
\pladder\mydef\{u \in \st: \{u(1),...,u(N)\}=\{0,1,...,N-1\}\}
$$
and
$$
\nladder\mydef\{u \in \st: \{u(1),...,u(N)\}=\{0,-1,...,-(N-1)\}\}.
$$
The set of ladder lists is given by $\ladder\mydef\pladder \cup \nladder$.

To state the next theorems, we define for any $u \in \st$ and for any $B \subset \st$, the hitting time $R^{\beta,u}(B)$ as follows
$$
R^{\beta,u}(B)\mydef \inf\{t>0: U_t^{\beta,u} \in B\}.
$$
Theorem \ref{fastconsensus} states that the invariant measure gets concentrated in the set of the ladder lists as the polarization coefficient diverges. Moreover, for any non-null initial list, the time it takes for the process to reach the set of ladder lists goes to $0$, as the polarization coefficient diverges. 
Corollary \ref{coro: fastconsensus} in Section \ref{sec: fastconsensus} presents an equivalent result when the initial list is the null list.
In this case, after the first jump time, which is exponentially distributed, the time it takes for the process to reach the set of ladder lists goes to $0$, as the polarization coefficient diverges.

\begin{theorem} \label{fastconsensus} 
 \begin{enumerate}
 \item[]
\item There exists a constant $C=C(N)>0$, such that for any $\beta\geq 0$  the invariant probability measure $\mu^{\beta}$ satisfies
$$
\mu^{\beta}(\ladder) \geq  1-C e^{-\beta}.
$$

\item 
 For any fixed $\delta>0$ 
$$
\sup_{u \in \st \setminus\{\vec{0}\}}\P\Big(R^{\beta,u}(\ladder)>e^{-\beta(1-\delta)}\Big) \to 0 \text{, as } \beta \to +\infty.
$$
\end{enumerate}
\end{theorem}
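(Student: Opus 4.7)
My plan is to prove Part 2 by a greedy-trajectory analysis of the polarized dynamics, and then to deduce Part 1 from a flux-balance argument between $\ladder$ and $\ladder^c$ that takes Part 2 as an input.

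\emph{Proof of Part 2.} \textbf{Step 1:} I first observe that at any $u \in \st$ with $M(u):=\max_a|u(a)|\geq 1$, the "aligned max-pressure firing" $(a^\star,\mathfrak{o}^\star)$ -- where $a^\star \in \argmax_a|u(a)|$ and $\mathfrak{o}^\star = \mathrm{sgn}(u(a^\star))$ -- has rate $e^{\beta M(u)}$, while every other (actor, opinion) pair has rate at most $e^{\beta(M(u)-1)}$. Consequently the total rate at $u$ lies in $[e^{\beta M(u)},\, 2Ne^{\beta M(u)}]$, the next jump agrees with the greedy choice with conditional probability at least $1-2Ne^{-\beta}$, and the holding time is stochastically dominated by $\mathrm{Exp}(e^\beta)$. \textbf{Step 2:} I would then prove the combinatorial lemma that from any $u \in \st\setminus\{\vec{0}\}$, the deterministic greedy trajectory reaches $\ladder$ in at most $K_0(N)$ steps without visiting $\vec{0}$. \textbf{Step 3:} A union bound over these $K_0(N)$ transitions, combined with Markov's inequality on the sum of their $\mathrm{Exp}(e^\beta)$-dominated holding times, then yields
$$
\sup_{u\in\st\setminus\{\vec{0}\}}\P\bigl(R^{\beta,u}(\ladder)>e^{-\beta(1-\delta)}\bigr) \;\leq\; C(N)\, e^{-\beta\delta} \;\to\; 0.
$$

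\emph{Proof of Part 1.} Using uniqueness of $\mu^\beta$ from Theorem \ref{existence}, I would invoke the stationary flux identity
$$
\sum_{u\in\ladder}\mu^\beta(u)\, q(u,\ladder^c) \;=\; \sum_{v\in\ladder^c}\mu^\beta(v)\, q(v,\ladder),
$$
where $q(\cdot,\cdot)$ denotes total jump rate between the indicated sets. \textbf{Step 1:} For any ladder state $u$, the unique ladder-preserving transition is the aligned top-pressure firing at rate $e^{\beta(N-1)}$, while every other transition at $u$ has rate at most $e^{\beta(N-2)}$; hence $q(u,\ladder^c)\leq CNe^{\beta(N-2)}$. \textbf{Step 2:} I would then partition $\ladder^c$ into shells by graph-distance from $\ladder$ and bound the invariant mass of each shell through its own local flux equation: the shell-1 states $v$ satisfy $\mu^\beta(v) \lesssim e^{-\beta}\mu^\beta(\ladder)$ (comparing predecessor flux from $\ladder$ with the total jump rate at $v$), and deeper shells carry exponentially less mass because each additional hop away from $\ladder$ costs a further factor $e^{-\beta}$ relative to the aligned-max-pressure transition back. \textbf{Step 3:} Summing the shell contributions would then give $\mu^\beta(\ladder^c)\leq C\,e^{-\beta}$, which rearranges to the stated bound.

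The main obstacle is the uniform combinatorial lemma in Part 2, Step 2. The delicate regime is states with $M=1$, where the greedy rule moves within the finite cube $\st\cap\{-1,0,+1\}^N$ and can appear to oscillate; to rule out cycles and bound the orbit length I would need a strict monovariant such as a lexicographic ordering of the sorted absolute-pressure vector, tailored to the greedy dynamics. For initial states with very large $\|u\|_\infty$ the greedy trajectory may be longer, but the first few jumps have rates $\geq e^{\beta\|u\|_\infty}$, far exceeding $e^\beta$, so they contribute negligibly to the total duration and the uniform estimate still goes through. A secondary, easier task for Part 1 is verifying convergence of the shell iteration, which follows from the exponential growth of jump rates with graph-distance from $\ladder$.
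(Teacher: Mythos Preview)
Your Part~2 plan is essentially the paper's own argument. Proposition~\ref{teo:m1m} formalizes your Steps~1--3: its Part~1 gives $\P\bigl(\bigcap_{j=1}^m M_j\bigr)\geq\zeta_\beta^m$ with $\zeta_\beta\to 1$, and its Part~2 is exactly your combinatorial lemma with the explicit bound $K_0(N)=3(N-1)$, proved by first driving the greedy trajectory into an auxiliary class $\mathcal{S}^+\cup\mathcal{S}^-$, then into $\pcons\cup\ncons$, then into $\ladder$. One small slip: your claim that ``every other pair has rate at most $e^{\beta(M-1)}$'' fails when several actors tie for $|u(a)|=M$, but this is harmless once the greedy event is taken to allow any such actor, as the paper does.

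Your Part~1 plan, however, has a genuine gap. The local balance at a shell-1 state $v$ reads $\mu^\beta(v)\,q_\beta(v)=\sum_w\mu^\beta(w)\,q(w,v)$, and the right-hand side includes flux from \emph{deeper} shells, not just from $\ladder$; comparing ``predecessor flux from $\ladder$'' with $q_\beta(v)$ therefore yields only a \emph{lower} bound on $\mu^\beta(v)$, so the shell iteration is circular. Moreover the heuristic that each hop away from $\ladder$ costs a factor $e^{-\beta}$ is false in this model: from the shell-1 state with pressure multiset $\{0,0,1,\ldots,N-2\}$ (reached from a positive ladder when the zero-pressure actor fires $-1$), the dominant transition is the greedy one at rate $e^{\beta(N-2)}$, and it \emph{increases} the graph distance from $\ladder$, while the only one-step returns to $\ladder$ have rate $O(1)$. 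The paper sidesteps this entirely by working with the embedded chain. Kac's lemma gives $\tilde\mu^\beta(u)=1/\E[\tilde R^{\beta,u}(u)]$, and for each of the finitely many $u\notin\ladder$ with $\max_a|u(a)|<N$ one bounds the expected return time from below by observing that the greedy path from $u$ avoids $u$, enters $\ladder$, and then stays there for geometrically many steps (Proposition~\ref{decay}); this yields $\tilde\mu^\beta(u)\leq C'e^{-\beta(N-1)}$. The infinitely many states with $\max_a|u(a)|\geq N$ are handled by the crude bound $q_\beta(u)\geq e^{\beta N}$, so that $\sum_{u}\tilde\mu^\beta(u)/q_\beta(u)\leq e^{-\beta N}$ over that set. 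This split into a finite ``small-pressure'' piece (controlled by return times) and an infinite ``large-pressure'' piece (controlled by jump rates) is the missing idea in your outline.
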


\vspace{0.3cm}

Theorem \ref{metastable} states that a highly polarized social network has a metastable behavior.

\begin{theorem}\label{metastable} 
For any $v \in \pcons$,
$$
\frac{R^{\beta,v}(\ncons)}{\mathbb{E}[R^{\beta,v}(\ncons)]}\to \text{Exp}(1) \text{ in distribution, as } \beta \to +\infty,
$$
where $\text{Exp}(1)$ denotes the mean $1$ exponential distribution.
\end{theorem}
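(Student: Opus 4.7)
The strategy is the classical pathwise approach to metastability, using $\pladder$ as a reference recurrent set inside the metastable well $\pcons$. By Theorem \ref{fastconsensus}(2), starting from any $v \in \pcons$ the process enters $\pladder$ in time at most $e^{-\beta(1-\delta)}$ with high probability; since this scale will turn out to be negligible compared to $\E[R^{\beta,v}(\ncons)]$, it suffices to treat $v \in \pladder$. Denote by $0 = \tau_0 < \tau_1 < \tau_2 < \dots$ the successive return times of $(U_t^{\beta,v})$ to $\pladder$ and, for each $i \geq 1$, let $I_i = 1$ on the event that the $i$th excursion hits $\ncons$ before returning to $\pladder$. By the strong Markov property the $I_i$ are conditionally Bernoulli, with success probability $p_\beta(w) := \P_w(I_1 = 1)$ depending on the ladder starting state $w$, and $R^{\beta,v}(\ncons) = \tau_{N_\beta - 1} + S_\beta$, where $N_\beta = \inf\{i : I_i = 1\}$ and $S_\beta$ is a short final excursion.

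Two quantitative estimates drive the proof. First, a uniform escape estimate: $p_\beta(w)$ is exponentially small in $\beta$, and $p_\beta(w)/p_\beta(w') \to 1$ uniformly over $w, w' \in \pladder$. Any trajectory from $\pcons$ to $\ncons$ must contain order $N$ ``against the pressure'' transitions, each occurring at rate at most $e^{-\beta}$, while pressure-aligned transitions have rates as large as $e^{\beta(N-1)}$; this yields the exponential smallness, and the uniformity over $\pladder$ follows because the typical internal dynamics cycles quickly through the states of $\pladder$ (the actor with highest pressure speaks positively, producing a cyclic rotation), so the precise starting state becomes essentially irrelevant. Second, the excursion length conditional on $I_1 = 0$ satisfies $p_\beta \cdot \E_w[\tau_1 \mid I_1 = 0] \to 0$ as $\beta \to +\infty$, which is a consequence of Theorem \ref{fastconsensus}(2) giving return times to $\ladder$ of order at most $e^{-\beta(1-\delta)}$ with high probability.

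Combining these estimates, $R^{\beta,v}(\ncons)$ is, up to a negligible remainder, a geometric sum (with success probability $p_\beta$) of approximately i.i.d.\ short excursion lengths. The standard limit theorem --- a geometric sum of random variables whose individual contribution is negligible, normalized by its mean, converges in distribution to $\mathrm{Exp}(1)$ --- then yields the conclusion, for example via a Laplace transform computation showing $\E[\exp(-s R^{\beta,v}(\ncons)/\E[R^{\beta,v}(\ncons)])] \to 1/(1+s)$. The main obstacle is the uniformity in the escape estimate: the starting state of each excursion varies over $\pladder$, and one must show that $p_\beta(w)$ does not fluctuate significantly with $w$. The cleanest route is to identify an effective ``fast'' deterministic cyclic dynamics on $\pladder$ and show that it equilibrates on the excursion time scale, so that the sequence of starting states visits $\pladder$ nearly uniformly long before any excursion escapes to $\ncons$; alternatively, a coupling argument showing that two excursions started from different $w, w' \in \pladder$ meet (and return to $\pladder$) before either hits $\ncons$ would suffice.
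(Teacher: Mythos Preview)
Your regeneration/excursion strategy is a legitimate alternative to the paper's approach, but you have manufactured your own ``main obstacle.'' The process has full permutation symmetry: for any bijection $\sigma:\A\to\A$, the law of $(U_t^{\beta,\sigma(u)})_{t\geq 0}$ equals the law of $(\sigma(U_t^{\beta,u}))_{t\geq 0}$, and both $\pladder$ and $\ncons$ are permutation-invariant. Hence $p_\beta(w)$ is \emph{literally constant} over $w\in\pladder$, and more to the point the entire excursion law (length and outcome) is identical for every starting point in $\pladder$. Your excursions are therefore genuinely i.i.d.\ and the Markov chain of return points on $\pladder$ is irrelevant; the equilibration and coupling arguments you propose are unnecessary. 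The paper invokes exactly this symmetry (see the remark after the definition of $c_\beta$ in \eqref{cbeta}, and the proof of Lemma~\ref{propcoupling}).

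Two further issues. First, the condition you isolate, $p_\beta\cdot\E_w[\tau_1\mid I_1=0]\to 0$, is not what drives a R\'enyi-type limit; what you need is $p_\beta\to 0$ (so that $N_\beta\to\infty$) together with enough control on the tail of $\tau_1$ to get uniform integrability after normalization. Second, Theorem~\ref{fastconsensus}(2) is a statement in probability, not in expectation, so it does not directly bound $\E_w[\tau_1]$; a long-excursion tail estimate is still required (and, incidentally, you also need to show that the final ``successful'' excursion length $S_\beta$ is $o(\E[R^{\beta,v}(\ncons)])$, which you gloss over). These are fixable, but as written the proposal is a plan rather than a proof.

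By contrast, the paper bypasses excursions entirely. It shows directly that the survival function is approximately multiplicative,
\[
\P\big(R^{\beta,l}(\nladder)>c_\beta(s{+}t)\big)\approx \P\big(R^{\beta,l}(\nladder)>c_\beta s\big)\,\P\big(R^{\beta,l}(\nladder)>c_\beta t\big),
\]
the error being controlled by $\P(U^{\beta,l}_{c_\beta s}\notin\ladder)$, which is small by the invariant-measure concentration of Theorem~\ref{fastconsensus}(1) via the elementary Lemma~\ref{propcoupling}. The exponential limit then follows by iterating this at dyadic times, and a dominated-convergence step (Lemma~\ref{prop7}) converts $c_\beta$ into the mean. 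This route uses Part~1 of Theorem~\ref{fastconsensus}, whereas your approach relies on Part~2; the paper's argument is shorter precisely because it avoids any need to analyse individual excursions.
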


\section{The social network under the influence of a robot}
\label{cap:modelrobot}

Theorems \ref{existence}, \ref{fastconsensus} and \ref{metastable} describe the behavior of the social network without any external influence.
To model the effect of an external influencer campaigning to push the global orientation of the social network in a certain direction, we add an extra feature to the model, namely the presence of a ``robot'' in the social network. 
Informally speaking, a robot behaves as an actor who is not influenced by the other actors and
always expresses the same opinion.  To simplify the presentation, we assume that the robot will campaign for the ``favorable'' opinion. 

Formally speaking, to describe the effect of the robot campaigning for the ``favorable'' opinion we consider an extra map $\pi^{*,+1}$ defined as follows. For any list of social pressures $u=(u(a): a \in \A)$, 
the new list $\pi^{*,+1}(u)$ satisfies
$$
\pi^{*,+1}(u)(b)\mydef
u(b)+1, \text{ for all } b\in \A.
$$

Let  
$$
\hat{U}_t^{\beta,u}= (\hat{U}_t^{\beta,u}(a): a \in \A)
$$ 
be the list of social pressures on the actors of the social network under the influence of the robot at time $t\geq 0$. The time evolution of this list $(\hat{U}_t^{\beta,u})_{t\in [0,+\infty)}$ is a Markov jump process taking values in $\str\mydef\mathbb{Z}^N$ with infinitesimal generator defined as follows
\begin{equation} \label{generator2}
\hat{\mathcal{G}}f(u)\mydef \sum_{\mathfrak{o} \in \mathcal{O}}\sum_{b\in \A}\exp\left(\beta \mathfrak{o} u(b)\right)\left[f(\pi^{b,\mathfrak{o}}(u))-f(u)\right]+\exp\left(\beta  \strength\right)\left[f(\pi^{*,+1}(u))-f(u)\right],
\end{equation}
for any  bounded function $f:\str \to \mathbb{R}$. In the above formula $\strength >0$ is a strictly positive parameter describing the ``strength'' of the robot.

The robot campaign for the ``favorable'' opinion with rate $\text{exp}(\beta \alpha)$, regardless of the social pressure exerted on the actors. Each time the robot campaign for the ``favorable'' opinion the social pressures on all the actors increase by one unit. Note that the robot is not an element of $\A$.

For any $\beta\geq 0$, for any $\strength >0$ and for any initial list $u \in \str$, the process $(\hat{U}_t^{\beta,u})_{t \in [0,+\infty)}$ exists and has a unique invariant probability measure $\nu^{\beta, \strength}$. The proof of this statement is similar as the proof of Theorem \ref{existence}.

For any $\strength \geq N-1$, we define $\pladderr$ as the set of lists in which all actors have different social pressures and the social pressure of each actor is non negative and
smaller than or equal to $\lfloor \strength\rfloor +1$
$$
\pladderr \mydef\left\{u \in \str: \bigcap_{a\in \A}\bigcap_{b\neq a}\{u(a)\neq u(b)\} \text{ and } 0 \leq u(a) \leq \lfloor \strength\rfloor +1 \text{, for all }  a \in \A \right\}.
$$
Observe that the definition of $\pladderr$ only makes sense when $\strength \geq N-1$. Note also
that for any $u \in \pladderr$, there exists $l \in \pladder$ such that $u(a) \geq l(a)$ for all $a\in \A$. In the case of a social network with a robot campaigning for the favorable opinion,
the set  $\pladderr$ will play the same role as the set of ladder lists $\ladder$ in a social network without external influence. This is the content of the next Theorem \ref{fastconsensus2}.

Theorem \ref{fastconsensus2} describes the behavior of the system with a robot pushing in the $+1$ direction.
If the strength of the robot $\strength$ is greater than $N-1$, then, as the coefficient polarization diverges, the system reaches very fast the set $\pladderr$ in which all the social pressures are positive. Moreover, in this case the invariant measure of the system of social pressures gets concentrated exponentially fast in the set $\pladderr$ as $\beta \to +\infty$. 

To state Theorem \ref{fastconsensus2}, we define for any $u \in \str$ and for any $B \subset \str$, the hitting time $\hat{R}^{\beta,u}(B)$ as follows
$$
\hat{R}^{\beta,u}(B)\mydef \inf\{t>0: \hat{U}_t^{\beta,u} \in B\}.
$$

\begin{theorem} \label{fastconsensus2}  For the model of social network with a robot with time evolution described by \eqref{generator2},
if $\strength > N-1$, then the following holds.
 \begin{enumerate} 
\item There exists a constant $\hat{C}=\hat{C}(N)>0$, such that for any $\beta\geq 0$  the invariant probability measure $\nu^{\beta, \strength}$ of the system satisfies
$$
\nu^{\beta, \strength}(\pladderr) \geq  1-\hat{C} e^{-\beta}.
$$
\item For any fixed $\delta>0$,
$$
\sup_{u \in \str 
}\P\Big(\hat{R}^{\beta,u}(\pladderr)>e^{-\beta \alpha(1-\delta)}\Big) \to 0 \text{, as } \beta \to +\infty. 
$$
\end{enumerate}
\end{theorem}

\section{Auxiliary notation and results} \label{sec:auxiliary}

In this section we will prove some auxiliary results that will be used to prove Theorems \ref{existence}, \ref{fastconsensus} and \ref{metastable}. To do this, we need to extend the notation introduced before.

\vspace{0.2cm}

\noindent \textbf{Extra notation}
\begin{itemize}
    \item 
     The Markov chain embedded in the process  $(U_t^{\beta,u})_{t\in [0,+\infty)}$ will be denoted  $(\tilde{U}^{\beta,u}_n)_{n \geq 0}$. In other terms, 
 $$\tilde{U}^{\beta,u}_0=u \ \text{ and } \
 \tilde{U}^{\beta,u}_n=U_{T_n}^{\beta,u}, \text{ for any $n\geq 1$. }
 $$
 
 \item 
 $(\tilde{U}_n^{\beta, u})_{n\geq 0}$ is a positive-recurrent Markov chain (see the proof of Part 2 of Theorem \ref{existence}). Its invariant probability measure  will be denoted $\tilde{\mu}^{\beta}$.
 
 \item For any list $u \in \st$, the first return time of the embedded Markov chain $(\tilde{U}_n^{\beta, u})_{n\geq 0}$ to $u$ will be denoted
 $$\tilde{R}^{\beta,u}(u) \mydef \inf\{n\geq 1:\tilde{U}^{\beta,u}_n=u\}.$$

\item For any $u\in \st$, the opposite list $-u \in \st$ is given by
$$
(-u)(a)=-u(a) \text{, for all } a \in \A.
$$

\item Let $\sigma: \A \to \A$ be a bijective map. For any $u\in \st$, the permuted list $\sigma(u)\in \st$ is given by
$$
\sigma (u)(a)=u(\sigma(a))  \text{, for all } a \in \A.
$$

\item The following event will appear several times in what follows. For any initial list $u \in \st$ and for any $n \geq 1$, 
$$
M_n^u \mydef \{A_n \in \text{argmax}\{|\tilde{U}_{n-1}^{\beta,u}(a)|: a \in \A\} \text{\ \  and \ \ } O_n\tilde{U}_{n-1}^{\beta,u}(A_n) \geq 0\}.
$$
$M_n^u$ is the event in which the $n$-th opinion expressed in the process with initial list $u$ is expressed by one the actors with greatest social pressure in absolute value and in the same direction of its social pressure. Note that this is the most likely choice of the pair (actor, opinion).

\end{itemize}

In Part 1 of Proposition \ref{teo:m1m} we show that the probability of the event $M_n^u$ for all $n=1,\ldots, m$, i.e.,  the sequentially occurrence of the most likely choice of the pair (actor, opinion) in the process,
is lower bounded for any initial list and this lower bound approaches $1$ as $\beta \to +\infty$. In Part 2 of Proposition \ref{teo:m1m} we show that the occurrence of the event $M_n^u$ for all $n=1,\ldots, 3(N-1)$ implies that $\tilde{U}^{\beta,u}_{3(N-1)} \in \ladder$.  

\begin{proposition} \label{teo:m1m}
\begin{enumerate}
\item[]
\item For any $u \in \st$ and for any $m \geq 1$,
$$
\P\left(\bigcap_{j=1}^mM_j^u\right) \geq (\zeta_{\beta})^m,
$$
where
$$
\zeta_{\beta} \mydef \frac{e^{\beta}}{e^{\beta}+e^{-\beta}+2(N-1)}\to 1, \text{ as } \beta \to +\infty.
$$
\item 
For any $u \in \st$,
$$
\P\left(\tilde{U}^{\beta,u}_{3(N-1)} \in \ladder\ \Big| \bigcap_{j=1}^{3(N-1)}M_j^u \right)=1.
$$
\end{enumerate}
\end{proposition}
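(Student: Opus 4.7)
For Part 1, the plan is to compute $\P(M_n \mid \mathcal{F}_{n-1})$ directly. Let $v = \tilde{U}^{\beta,u}_{n-1}$; conditionally on $v$, the pair $(A_n, O_n)$ is drawn from $\A \times \opn$ with probability proportional to $e^{\beta \mathfrak{o}\, v(a)}$. Writing $M = \max_{a \in \A} |v(a)|$ and $\mathcal{M} = \argmax_{a \in \A} |v(a)|$, the event $M_n$ corresponds to choosing some $a^* \in \mathcal{M}$ together with the opinion $\mathfrak{o}$ matching $\mathrm{sign}(v(a^*))$ (any opinion when $v(a^*) = 0$). The case $M = 0$ is trivial since then $M_n$ is sure; for $M \geq 1$ the favorable rates sum to $|\mathcal{M}|\, e^{\beta M}$, while each $a \notin \mathcal{M}$ contributes at most $2 e^{\beta(M-1)}$ to the total rate because $|v(a)| \leq M - 1$. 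Dividing by $|\mathcal{M}|\, e^{\beta M}$ and using $M \geq 1$ together with $(N - |\mathcal{M}|)/|\mathcal{M}| \leq N - 1$ yields $\P(M_n \mid \mathcal{F}_{n-1}) \geq \zeta_\beta$, and $\P(\bigcap_{j=1}^m M_j) \geq \zeta_\beta^m$ then follows by $m$ applications of the tower property; the convergence $\zeta_\beta \to 1$ is immediate from the explicit formula.

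For Part 2, conditioning on $\bigcap_j M_j$ turns the evolution into a (tie-broken) deterministic dynamic: at each step, $u$ is sent to $\pi^{a^*, \sigma}(u)$ for some $a^* \in \argmax_a |u(a)|$ and $\sigma = \mathrm{sign}(u(a^*))$ (any $\sigma$ when $u = \vec 0$). The plan is to split the $3(N-1)$ steps into a consensus-reaching phase of at most $2(N-1)$ steps followed by a ladder-forming phase of at most $N - 1$ steps. For the ladder-forming phase, I would observe that for $u \in \pcons$ with sorted values $(s_1, \ldots, s_N)$ and $s_1 = 0$, the $M$-operation maps this sorted multiset to $(0, 1, s_2 + 1, \ldots, s_{N-1} + 1)$. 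A direct induction on $k$ then shows that after $k$ $M$-steps starting in $\pcons$, the sorted multiset has initial segment $(0, 1, \ldots, k)$; after $k = N - 1$ steps it equals $(0, 1, \ldots, N-1)$, so the state lies in $\pladder$. The analogous statement for $\ncons \to \nladder$ is symmetric; also $\pcons$ and $\ncons$ are invariant under $M$-steps, and from $\vec 0$ a single $M$-step already enters $\pcons \cup \ncons$.

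The main obstacle is the consensus-reaching phase: showing that at most $2(N-1)$ $M$-steps suffice to enter $\pcons \cup \ncons \cup \{\vec 0\}$ from an arbitrary $u \in \st$. The difficulty is that $\max_a |u(a)|$ can \emph{grow} at each step when several argmaxes of $|u(\cdot)|$ share a sign (the picked one resets to $0$ while the others move one unit further), so no naive monotone function of the maximum can serve as a potential. My approach is to track the pair $(u^+, u^-) := (\max_a u(a), -\min_a u(a))$, both nonnegative because $u \in \st$ has a zero coordinate, and argue by case analysis on the sign and uniqueness of the chosen argmax. The key observation is that whenever both $u^+ \geq 1$ and $u^- \geq 1$, each $M$-step strictly decreases $\min(u^+, u^-)$ by at least one (the ``inactive'' side loses a unit of magnitude), while the ``active'' side can only grow if several same-sign coordinates attain the extremal value, an event that can occur for at most $N - 1$ consecutive steps before the active side becomes uniquely maximized. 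A careful accounting of these competing effects, using that only $N$ coordinates are available, should yield the $2(N-1)$ bound. Combined with the ladder-forming phase, this proves the proposition.
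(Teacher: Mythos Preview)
Your Part 1 is correct and matches the paper's argument: both reduce to the uniform lower bound $\P(M_n\mid \tilde U_{n-1}=v)\geq \zeta_\beta$, and your estimate via $|\mathcal M|e^{\beta M}$ versus $2e^{\beta(M-1)}$ is exactly the computation that pins the worst case at $v=(1,0,\dots,0)$.

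Your ladder-forming phase is also fine; the sorted-multiset recursion $(s_1,\dots,s_N)\mapsto(0,1,s_2+1,\dots,s_{N-1}+1)$ is a clean way to see that $N-1$ $M$-steps take $\pcons$ into $\pladder$, and this is essentially what the paper asserts (without writing out the details) in the last displayed implication of the proof of Part~2.

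The gap is in the consensus-reaching phase. Your potential $\min(u^+,u^-)$ does decrease by at least one at every $M$-step while both sides are nonzero, but its \emph{initial} value is completely unbounded in $u\in\st$: the state space allows $u=(0,K,-K,\dots)$ with $K$ arbitrarily large, so ``decreases by one per step'' gives no finite bound, let alone $2(N-1)$. The vague appeal to ``only $N$ coordinates are available'' does not repair this, because nothing in the $(u^+,u^-)$ bookkeeping sees the reset structure. In your own example-tracking you would observe that $\min(u^+,u^-)$ can drop by far more than one in a single step (e.g.\ from $(0,K,-K+1)$ one step yields $\min=1$), but your argument does not harness this.

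The paper's route is different and supplies precisely the missing idea. It introduces the stopping time
\[
\tau=\inf\{n\geq 1: A_n\in\{A_1,\dots,A_{n-1}\}\cup\{a:u(a)=0\}\},
\]
which satisfies $\tau\leq N$ deterministically. The point is that at time $\tau-1$ the selected actor $A_\tau$ has been reset (or was initially zero), so $|\tilde U_{\tau-1}(A_\tau)|\leq \tau-1$; since $M_\tau$ forces $A_\tau$ to be an argmax, \emph{all} pressures at time $\tau-1$ are bounded by $\tau-1\leq N-1$. Moreover the pressures of $a_0,A_1,\dots,A_{\tau-1}$ form a nearest-neighbour path from $0$, which puts $\tilde U_\tau$ into a structured set $\mathcal S^{\pm}$ (a staircase $\{1,\dots,m\}$ together with the bound $u(a)\geq-(m-1)$, or the reflected version). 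From $\mathcal S^+$ the paper then shows, by an argument close in spirit to your $u^-$-decrement, that $n(u)-1\leq N-2$ further $M$-steps reach $\pcons$; the total is $N+(N-2)+(N-1)=3(N-1)$. What makes this work is that after $\tau$ steps one has a bound on $u^-$ of size at most $N-2$, so the decrement argument now terminates in boundedly many steps. Your sketch is missing exactly this bounding step, and without it the consensus phase is not proved.
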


We will first prove Part $1$ of Proposition \ref{teo:m1m}.

\begin{proof}

To prove Part $1$ of Proposition \ref{teo:m1m}, we first observe that by the Markov property, 
 \begin{equation}
     \label{eq1prop11}
 \P\left(\bigcap_{j=1}^mM_j^u\right)=
 \sum_{v \in \st}\P\left(\bigcap_{j=1}^{m-1}M_j^u,\tilde{U}^{\beta,u}_{m-1}=v  \right)\P(M_1^v).
\end{equation}
To obtain a lower bound for \eqref{eq1prop11} we will obtain a lower bound for $\P(M_1^u)$. Note that in the case $u=\vec{0}$, the probability $\P(M_1^u)$ is maximized.
For any $u \in \st \setminus\{\vec{0}\}$,
$$
\P(M_1^u)=\frac{|Y(u)|e^{\beta y(u)}}{|Y(u)|(e^{\beta y(u)}+e^{-\beta y(u)})+ \displaystyle\sum_{b\not\in Y(u)}(e^{\beta u(b)}+e^{-\beta u(b)})},
$$
where
$
y(u)=\max\{|u(a)|: a\in \A\}
$
and
$
Y(u)=\{a \in \A: |u(a)|=y(u)\}.
$
By rearranging the terms of the equation above, we have that 
$$
\P(M_1^u)=\frac{1}{(1+e^{-2\beta y(u)})+ \displaystyle\frac{1}{|Y(u)|}\displaystyle\sum_{b\not\in Y(u)}(e^{\beta (u(b)-y(u))}+e^{-\beta (u(b)+y(u))})}.
$$
Since $|u(b)|-y(u) \leq -1$, for any $b \not\in Y(u)$, we have that
$(e^{\beta (u(b)-y(u))}+e^{-\beta (u(b)+y(u))}) \leq 2e^{-\beta}$, for any $b\not\in Y(u)$. This implies that,
$$
\P(M_1^u)\geq\frac{1}{(1+e^{-2\beta y(u)})+ \displaystyle\frac{N-|Y(u)|}{|Y(u)|}2e^{-\beta}}\geq \zeta_{\beta},
$$
where in the last inequality we use the fact that for any $u\neq \vec{0}$, $|Y(u)|\geq 1$ and $y(u) \geq 1$.
Applying this lower bound $m$ times in Equation \eqref{eq1prop11}, we conclude the proof of Part 1.

\end{proof}

The proof of Part 2 of Proposition \ref{teo:m1m} is based on two Lemmas. Before proving Lemmas \ref{lemma1m1m} and \ref{lemma2m1m}, let us introduce some extra notation.

Let $I_n \mydef \{1,\ldots, n\}$, for each $n\geq 1$.
For any $u\in \st$, let
$$
n^+(u)\mydef\inf\{n\geq 1: I_n \subset \{u(a):a\in \A\} \text{ and } \min\{u(a):a\in \A\} \geq -(n-1)\},
$$
$$
n^-(u)\mydef\inf\{n \geq 1: I_n \subset \{-u(a):a\in \A\} \text{ and } \max\{u(a):a\in \A\} \leq (n-1)\},
$$
$$
n(u)\mydef\max\{n^-(u),n^+(u)\},
$$
with the convention $\inf\{\emptyset\}=0$. Note that $\min\{n^+(u),n^-(u)\}=0$, by definition.

Let $\mathcal{S}^+ \mydef \{u \in \st: n^+(u) \geq 1\}$ and $\mathcal{S}^- \mydef \{u \in \st: n^-(u) \geq 1\}$. In other words, $\mathcal{S}^+$ is the set of lists $u \in \st$ such that there exists $n\in\{1,\ldots, N-1\}$ and a sequence of $n$ different actors $a_1(u),\ldots, a_{n}(u)$ satisfying
\begin{equation*} 
  u(a_j(u))=j, \text{ for } j=1,...,n \quad \text{ and } \quad 
u(a) \geq -(n-1),
 \text{ for all } a \in \A.
\end{equation*}
$n^+(u)$ is the smallest $n$ such that the above conditions are satisfied. $\mathcal{S}^-$ and $n^-(u)$ are described in an analogous way.

For any initial list $u \in \st$, let
$$
\tau(u) \mydef \inf\{n\geq1: A_n \in \{A_1,..., A_{n-1}\} \cup\{a \in \A: u(a)=0\}\}
$$ 
be the number of times in which an actor expressed an opinion in the process until the first instant in which either one actor with null initial social pressure expresses an opinion or an actor expresses an opinion for the second time. Since we have $N$ actors and at least one actor has null social pressure, it follows that $\tau(u) \leq N$.

To prove Part 2 of Proposition \ref{teo:m1m} we first prove that the occurrence of the event 
$M_{\tau(u)}^u$ implies that $\tilde{U}^{\beta,u}_{\tau(u)} \in \mathcal{S}^+ \cup  \mathcal{S}^-$. This is the content of Lemma \ref{lemma1m1m}. Furthermore, for any $u\in \mathcal{S}^+\cup \mathcal{S}^-$, the occurrence of the event $M_n^u$ for all $n=1,\ldots, n(u)-1$ implies that $\tilde{U}^{\beta,u}_{n(u)-1)} \in \pcons\cup \ncons$. This is the content of Lemma \ref{lemma2m1m}. Putting all this together we are able to prove Part 2 of Proposition \ref{teo:m1m}.

\begin{lemma} \label{lemma1m1m}

For any initial list $u \in \st$, we have 
$$
\P\left(\tilde{U}_{\tau(u)}^{\beta,u}\in \mathcal{S}^+ \cup \mathcal{S}^- \ \big| \ M_{\tau(u)}^u\right)=1.
$$

\end{lemma}
\begin{proof}
By definition, the event $\{\tau(u)=1\}$ implies that $A_1 \in \{a\in \A: u(a)=0\}$.
Therefore,
$\{\tau(u)=1\} \cap M_1^u$ is only possible if $u=\vec{0}$. In this case, we have that $\tilde{U}_1^{\beta,u} \in  \mathcal{S}^+ \cup \mathcal{S}^-$ and then, the lemma holds when  $\{\tau(u)=1\}$.

Assume that $\tau(u) \geq 2$. At instant $\tau(u)-1$, we have that $\tilde{U}_{\tau(u)-1}^{\beta,u}(A_{\tau(u)-1})=0$ and since $A_j \neq A_k$, for each $1\leq j < k \leq \tau(u)-1$, we have that for any $j=1,\ldots,\tau(u)-2$,
$$
\tilde{U}_{\tau(u)-1}^{\beta,u}(A_j)=O_{j+1}+\ldots+O_{\tau(u)-1},
$$
which implies that
\begin{equation} \label{eq:spdif1}
|\tilde{U}^{\beta,u}_{\tau(u)-1}(A_j)-\tilde{U}^{\beta,u}_{\tau(u)-1}(A_{j-1})|=|O_j|=1.
\end{equation}
Moreover, for any $a \in \A$ such that $u(a)=0$, by the definition of $\tau(u)$ we have that $a \not\in \{A_1,\ldots, A_{\tau(u)-1}\}$. 
This implies that
$$
\tilde{U}_{\tau(u)-1}^{\beta,u}(a)=O_1+\ldots+O_{\tau(u)-1},
$$
and then, 
\begin{equation} \label{eq:spdif2}
|\tilde{U}_{\tau(u)-1}^{\beta,u}(A_1)-\tilde{U}_{\tau(u)-1}^{\beta,u}(a)|=1.
\end{equation}
Let
$$
m \mydef \max\{|O_{\tau(u)-1}|,|O_{\tau(u)-2}+O_{\tau(u)-1}|,\ldots, |O_1+\ldots+O_{\tau(u)-1}|\}.
$$
It follows from \eqref{eq:spdif1} and \eqref{eq:spdif2} (see Figure \ref{fig:M3}) that there exists a set with $m+1$ actors 
$$
\{a_0,a_1,\ldots, a_m\} \subset \{A_1,\ldots,A_{\tau(u)-1}\} \cup \{a \in \A: u(a)=0\},
$$ 
such that either 
$$
\{\tilde{U}_{\tau(u)-1}^{\beta,u}(a_j):j=0,1,\ldots,m\}=\{0,1,\ldots, m\}
$$
or
$$
\{\tilde{U}_{\tau(u)-1}^{\beta,u}(a_j):j=0,1,\ldots,m\}=\{0,-1,\ldots, -m\}.
$$

\begin{figure}[h]
\centering
\begin{tikzpicture}[scale=0.8]
\begin{axis}[
symbolic x coords={0,$a_0$, $A_1$, $A_2$, $A_3$, $A_4$, $A_5$, $A_6$, $A_7$, $A_8$, $A_{\tau-1}$, $a$}, xtick=data, 
axis y line=center, 
axis x line=bottom,     
axis x line shift=-1, 
] 
\addplot+[only marks,color=black,dashed] coordinates {($a_0$,0) ($A_1$, 4) ($A_2$, 4) ($A_3$, -1) ($A_4$, -1) ($A_5$, 3) ($A_6$, 2) ($A_7$, 3) ($A_8$, -1) ($A_{\tau-1}$, -1) ($a$,2)}; 
\addplot+[color=white,dashed,line width=0.7mm,mark=none] coordinates { (0, 4) ($A_{\tau-1}$, 4) };
\end{axis} 
\end{tikzpicture}
\hfill
\begin{tikzpicture}[scale=0.8]
\begin{axis}[
symbolic x coords={0,$a_0$, $A_1$, $A_2$, $A_3$, $A_4$, $A_5$, $A_6$, $A_7$, $A_8$, $A_{\tau-1}$}, xtick=data, 
axis y line=center, 
axis x line=bottom,     
axis x line shift=-1, 
] 
\addplot+[only marks,color=black,dashed] coordinates {($a_0$, 3) ($A_1$, 2) ($A_2$, 3) ($A_3$, 4) ($A_4$, 3) ($A_5$, 2) ($A_6$, 1) ($A_7$, 0) ($A_8$, -1) ($A_{\tau-1}$, 0)};  
\addplot+[color=red,dashed,line width=0.7mm,mark=none] coordinates { (0, 4) ($A_{\tau-1}$, 4) };
\end{axis} 
\draw[red,dashed] (2.05, 4.55) circle (0.2);
\draw[red,dashed] (2.75,5.68) circle (0.2);
\draw[red,dashed] (4.11, 3.41) circle (0.2);
\draw[red,dashed] (4.795, 2.265) circle (0.2);
\draw[red,dashed] (5.464, 1.142) circle (0.2);
\end{tikzpicture}

\caption{An example showing that 
either $\{0,1,\ldots,m\}$  
or $\{0,-1,\ldots,-m\}$ is a subset of $\{\tilde{U}_{\tau(u)-1}^{\beta,u}(a): a=a_0,\ldots, A_{\tau-1}\}$, where
$m=\max\{|\tilde{U}_{\tau(u)-1}^{\beta,u}(a)|: a=a_0,\ldots, A_{\tau-1}\}$. The figure in the left shows the initial list of social pressures $u$. In this example $N=11$, $\tau=10$, $\{a_0\} = \{b\in \A : u(b)=0\}$ and $\{a\} = \{b\in \A : u(b)\neq 0 \text{ and } b\neq A_j, j=1,\ldots, \tau(u)-1\}$.
The figure in the right shows $\tilde{U}_{\tau(u)-1}^{\beta,u}(a)$, for $a=a_0, A_1,\ldots, A_{\tau-1}$. In this example, $(O_1,\ldots, O_{\tau(u)-1})=(+1,-1,-1,+1,+1,+1,+1,+1,-1)$ and $m=4$. The red line represents this maximum social pressure. By the definition of $\tau$, it follows that $a_0,\ldots, A_{\tau-1}$ are different actors. As a consequence of  \eqref{eq:spdif1} and \eqref{eq:spdif2}, the absolute value of the difference between the social pressures of the subsequent actors on the figure in the right is $1$. In this example, this implies that $\{0,1,\ldots,m\} \subset \{\tilde{U}_{\tau(u)-1}^{\beta,u}(a): a=a_0,\ldots, A_{\tau-1}\}$. In fact, $\{A_7, A_6, A_5, A_2, A_3\}=\{0,1,\ldots,4\}$. This is represented by the red dashed circles. Note that the figure in the right remains the same if we change $u$ but keep the same values of $(O_1,\ldots, O_{\tau(u)-1})$ and $\tau(u)$.
}
\label{fig:M3}
\end{figure}

By assumption, 
$$
A_{\tau(u)} \in \text{argmax}\{|\tilde{U}_{\tau(u)-1}^{\beta,u}(a)|: a \in \A\} \text{\ \  and \ \ } O_{\tau(u)}\tilde{U}_{\tau(u)-1}^{\beta,u}(A_{\tau(u)}) \geq 0.
$$
This together with the definition of $\tau(u)$ implies that
$$
|\tilde{U}_{\tau(u)-1}^{\beta,u}(A_{\tau(u)})|=m \geq  |\tilde{U}_{\tau(u)-1}^{\beta,u}(a)| \text{, for all } a \in \A,
$$
and therefore, either 
$$
\{\tilde{U}_{\tau(u)}^{\beta,u}(a_j):j=0,1,\ldots,m-1\}=\{1,\ldots, m\} \  \text{ and } \ \tilde{U}_{\tau(u)}^{\beta,u}(a) \geq -(m-1), \text{ for all } a \in \A,
$$
or
$$
\{\tilde{U}_{\tau(u)}^{\beta,u}(a_j):j=0,1,\ldots,m-1\}=\{-1,\ldots, -m\}  \  \text{ and } \ \tilde{U}_{\tau(u)}^{\beta,u}(a) \leq m-1, \text{ for all } a \in \A.
$$
This concludes the proof of Lemma \ref{lemma1m1m}.

\end{proof}

\begin{lemma} \label{lemma2m1m} 
For any $u \in \mathcal{S}^+,$ we have
$$
\P\left(\tilde{U}_{n(u)-1}^{\beta,u} \in \pcons\ \Big| \ \displaystyle\bigcap_{j=1}^{n(u)-1}M_j^u\right)=1.
$$

\end{lemma} 
\begin{proof}

Note that $ \{u\in \mathcal{S}^+: n(u)=1\} \subset \pcons$. This implies that the lemma holds when $n(u)=1$.

Assume that $u \in \mathcal{S}^+$ and $n(u) \geq 2$.
By the definition of $\mathcal{S}^+$, if $M_1^u$ occurs, then  
$$
\tilde{U}_0^{\beta,u}(A_1)= \max\{|u(a)|: a \in \A\} \text{ and } O_1=+1.
$$
Moreover, for any $j=0,\ldots,n(u)-1$ there exists an actor $a_j(u) \in \A$ such that $\tilde{U}_0^{\beta,u}(a_j(u))=u(a_j(u))
=j$. Therefore,
$$
\tilde{U}_1^{\beta,u}(a_j(u))=u(a_j(u))+1=j+1.
$$
As a consequence, 
$$
\{1,\ldots,n(u)\} \subset \{\tilde{U}_1^{\beta,u}(a): a \in \A\} \quad \text{ and } \quad 
\tilde{U}_1^{\beta,u}(a) \geq -(n(u)-2),
 \text{ for all } a \in \A.
$$

In general, for any $k=1,\ldots, n(u)-1$, if $\bigcap_{j=1}^{k}M_j^u$ occurs, then
there exists a sequence of actors $a_0(\tilde{U}^{\beta,u}_{k-1})$,  $\ldots$,  $a_{n(u)}(\tilde{U}^{\beta,u}_{k-1})$ such that for any $j=0,\ldots, n(u)-1$, we have that
$$
\tilde{U}_k^{\beta,u}(a_j(\tilde{U}^{\beta,u}_{k-1}))=j+1,
$$
and therefore,
$$
\{1,\ldots,n(u)\} \subset \{\tilde{U}_k^{\beta,u}(a): a \in \A\} \ \text{ and } \ 
\tilde{U}_k^{\beta,u}(a) \geq -(n(u)-(k+1)),
 \text{ for all } a \in \A.
$$

We conclude the proof of Lemma \ref{lemma2m1m} by taking $k=n(u)-1$. 
\end{proof}

\begin{proof}
To prove Part 2 of Proposition \ref{teo:m1m}, we first 
observe that for any $u \in \st$, $\tau(u) \leq N$ and for any $u\in \mathcal{S}^+ \cup \mathcal{S}^-$,  $n(u) \leq N-1$, by definition.
Therefore, the Markov property implies the following inequality
$$
\P\left(\tilde{U}^{\beta,u}_{3(N-1)} \in \ladder\ \Big| \bigcap_{j=1}^{3(N-1)}M_j^u \right)\geq \P\left(\tilde{U}^{\beta,u}_{
\tau(u)} \in \mathcal{S}^+ \cup \mathcal{S}^-\ \Big| \ \bigcap_{j=1}^{\tau(u)}M_j^u \right)\times
$$
$$
\P\left(\tilde{U}^{\beta,u}_{\tau(u)+n(u)-1}\in \pcons \cup \ncons \ \Big| \ \tilde{U}^{\beta,u}_{
\tau(u)} \in \mathcal{S}^+ \cup \mathcal{S}^-\ , \bigcap_{j=\tau(u)+1}^{\tau(u)+n(u)-1}M_j^u \right)\times 
$$
$$
\P\left(\tilde{U}^{\beta,u}_{3(N-1)}\in \ladder \ \Big| \ \tilde{U}^{\beta,u}_{\tau(u)+n(u)-1}\in \pcons \cup \ncons\ , \ %\tilde{U}^{\beta,u}_{%\tau
%\tau(u)} \in \mathcal{S}^+ \cup \mathcal{S}^-\ 
\bigcap_{j=\tau(u)+n(u)}^{3(N-1)}M_j^u \right).
$$
By Lemma \ref{lemma1m1m}, Lemma \ref{lemma2m1m} and the symmetric properties of the process, the two first terms of the right-hand side of the above equation are equal to $1$. 

Note that, by definition, for any $m\geq (N-1)$ and for any $v\in \pcons$,
$$
\P\left(\tilde{U}_m^{\beta,v} \in \pladder \ \Big| \ \bigcap_{j=1}^m M_j^v \right)=1.
$$
Since $\tau(u)+n(u)-1 \leq 2(N-1)$, this together with the symmetric properties of the process implies that
$$
\P\left(\tilde{U}^{\beta,u}_{3(N-1)}\in \ladder \ \Big| \ \tilde{U}^{\beta,v}_{\tau(u)+n(u)-1}\in \pcons \cup \ncons\ , \ %\tilde{U}^{\beta,u}_{%\tau
%\tau(u)} \in \mathcal{S}^+ \cup \mathcal{S}^-\ 
\bigcap_{j=\tau(u)+n(u)}^{3(N-1)}M_j^u \right)=1.
$$
This concludes the proof of Part 2 of Proposition \ref{teo:m1m}.
\end{proof}

\begin{corollary} \label{coro:return}
For any $u\not\in \ladder$ and for any $m\geq 1$,
$$
\P\left(\tilde{U}^{\beta,u}_{m} =u \ \Big| \ \bigcap_{j=1}^{m}M_j^u \right)=0.
$$
\end{corollary}
\begin{proof}
It follows directly by Part 2 of Proposition \ref{teo:m1m}, by contradiction.
\end{proof}

For any fixed $l \in \pladder$, let $c_{\beta,l}$ be the positive real number such that
\begin{equation}\label{cbeta}
\P(%T^{N,\beta,v}(\nladder)
R^{\beta,l}(\nladder)>c_{\beta,l})=e^{-1}.
\end{equation}
Due to the symmetric properties of the process, it is clear that $c_{\beta,l}=c_{\beta,l'}$, for any pair of lists $l$ and $l'$ belonging to $\pladder$. Therefore, in what follows we will omit to indicate $l$ in the notation of $c_{\beta}.$
The next proposition gives a lower bound to $c_{\beta}$.

\begin{proposition} \label{cbetabound}
There exists  $C_1>0$ such that for any $\beta \geq 0$,
$$
c_{\beta} \geq C_1e^{\beta}.
$$

\end{proposition}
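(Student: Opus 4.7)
The plan is to reduce the estimate to a lower bound on the exit time from $\pcons$. Since $l \in \pladder \subset \pcons$, $\nladder \subset \ncons$, and $\pcons \cap \ncons = \emptyset$, the process must leave $\pcons$ before reaching $\nladder$. Writing $\tau := R^{\beta, l}(\pcons^{c})$, we have $R^{\beta, l}(\nladder) \geq \tau$, so it is enough to prove that $\P(\tau > C_1 e^{\beta}) \geq e^{-1}$ for some $C_1 > 0$.

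I would bound $\P(\tau \leq t)$ using the compensator of the counting process of transitions from $\pcons$ into $\pcons^{c}$. A case analysis of the map $\pi^{b,\mathfrak{o}}$ shows that, for $u \in \pcons$, only the $-1$ opinions can take the process out of $\pcons$, and such a transition by actor $b$ does so precisely when some actor different from $b$ has zero pressure in $u$. The resulting exit rate is
\[
\lambda(u) \;=\; \sum_{b \in \A} e^{-\beta u(b)} \, \mathbf{1}\{\exists\, a \in \A \setminus \{b\} : u(a) = 0\}.
\]
When $u \in \pladder$ there is a unique zero-pressure actor and all others have pressure at least $1$, so $\lambda(u) \leq (N-1) e^{-\beta}$; for a general $u \in \pcons$ the trivial bound $\lambda(u) \leq N$ still holds. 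Applying Markov's inequality to the compensator yields
\[
\P(\tau \leq t) \;\leq\; \E\!\left[\int_0^{t \wedge \tau} \lambda(U_s^{\beta, l})\, ds\right] \;\leq\; (N-1) e^{-\beta}\, t \;+\; N \, \E\!\left[\int_0^t \mathbf{1}_{\pcons \setminus \pladder}(U_s^{\beta, l})\, ds\right].
\]

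The critical remaining step is the estimate $\E[\int_0^t \mathbf{1}_{\pcons \setminus \pladder}(U_s^{\beta, l}) \, ds] \leq C\, t\, e^{-\beta}$. I would establish it by an excursion decomposition about $\pladder$: from a ladder state the rate of transitions that leave $\pladder$ but stay in $\pcons$ is of order $e^{\beta(N-2)}$, while the total jump rate is of order $e^{\beta(N-1)}$, so each excursion has typical real-time duration of order $e^{-\beta(N-1)}$; iterating Proposition \ref{teo:m1m} via the strong Markov property at successive returns to $\ladder$ confirms that the embedded chain comes back to $\pladder$ within $O(1)$ jumps with high probability. Multiplying the rate of initiation of excursions by their short duration produces an $O(e^{-\beta})$ bound on the expected fraction of time spent in $\pcons \setminus \pladder$, uniformly in $t$.

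Putting everything together, $\P(\tau \leq t) \leq C'\, t\, e^{-\beta}$, and choosing $t = (1 - e^{-1})/C' \cdot e^{\beta}$ gives $\P(\tau > t) \geq e^{-1}$, hence $c_\beta \geq C_1 e^{\beta}$ with $C_1 = (1 - e^{-1})/C'$. The main obstacle is making the excursion argument rigorous: one must accommodate the non-stationary initial condition and the fact that excursions may reach states with two or more zero-pressure actors, where $\lambda$ is no longer exponentially small in $\beta$. The strong Markov property at successive return times to $\pladder$, combined with Proposition \ref{teo:m1m}, reduces the analysis to estimating a single excursion's contribution, which must then be accumulated up to time $t$ with constants independent of $\beta$.
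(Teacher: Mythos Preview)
Your reduction to the exit time $\tau=R^{\beta,l}(\pcons^{c})$ matches the paper's first move, but from there the arguments diverge. The paper never tries to control where the process sits inside $\pcons$; instead it introduces two explicit stopping times whose minimum lower-bounds $\tau$: $\tau_-^{(1)}$, the first time an actor with strictly positive pressure emits $-1$, and $\tau_-^{(2)}$, the first time two events of the form ``zero-pressure actor emits $-1$'' occur within two jump steps of one another. Before either event the process cannot leave $\pcons$. Then $\P(\tau_-^{(1)}>t)\ge e^{-(N-1)e^{-\beta}t}$ directly (each positive-pressure actor emits $-1$ at rate at most $e^{-\beta}$), and conditionally on $\{\tau_-^{(1)}>t\}$ the time $\tau_-^{(2)}$ dominates a geometric sum of i.i.d.\ mean-$1$ exponentials with success probability $2(N-1)/(e^{\beta}+N-1)$, hence an exponential of rate $O(e^{-\beta})$. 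Multiplying the two survival bounds gives $e^{-1}\le \P(\tau>c_\beta)$ for $c_\beta$ of order $e^{\beta}$, with no occupation-time estimate required at all.

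In your route the whole difficulty is concentrated in the claim $\E\int_0^t \mathbf{1}_{\pcons\setminus\pladder}(U_s^{\beta,l})\,ds \le C t e^{-\beta}$, which you leave as a sketch. The heuristic (excursions initiated at rate $\sim e^{\beta(N-2)}$, each of real-time length $\sim e^{-\beta(N-1)}$) is sound for the dominant excursion type, but a rigorous uniform-in-$t$ bound needs a renewal argument at successive returns to $\pladder$ with tail control on excursion lengths, not just their means, plus separate bookkeeping for the rate-$1$ excursions triggered by the zero-pressure actor emitting $-1$ (these last $\sim(N-1)e^{-\beta(N-2)}$ and visit states with two zeros, where your $\lambda$ is of order $1$ rather than $e^{-\beta}$). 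Proposition~\ref{teo:m1m} does give return to $\ladder$ in $O(N)$ embedded steps with high probability, so the program can be completed, but it is noticeably more work than the paper's two-stopping-time trick. One small slip: your formula for $\lambda(u)$ misses the transition to $\vec{0}$ (from a state with one zero and all other pressures equal to $1$, the zero-pressure actor emitting $-1$ sends the process to $\vec{0}\in\pcons^{c}$ without any other zero being present), though this only arises in $\pcons\setminus\pladder$ where your crude bound $\lambda\le N$ already absorbs it.
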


\begin{proof}
For any fixed $l \in \pladder$, let
$$
\tau_-^{(1)} \mydef \inf\{T_n: O_n=-1, U_{T_{n-1}}^{\beta, l}(A_{n}) > 0\}
$$ 
be the first time in which an actor with positive social pressure expresses an opinion  $-1$.
Consider also
$$
\tau_-^{(2)} \mydef \inf\{T_n :E_n^-\cap (E_{n-1}^- \cup E_{n-2}^-) \},
$$ 
where $E_n^-\mydef \{O_n=-1, U_{T_{n-1}}^{\beta, l}(A_{n})=0\}$, for $n\geq 1$.
$\tau_-^{(2)}$ is the first time in which two opinions $-1$ are expressed in the network by actors with null social pressure with at most one positive opinion expressed between them. We define for convention $E_0^-=\emptyset$.

Note that, starting from $l \in \pladder$, there are three ways to exit the set of positive consensus lists:
\begin{itemize}
    \item an actor with positive social pressure expresses an opinion $-1$;
    \item two opinions $-1$ are expressed in the network sequentially by actors with null social pressure;

    \item the alternation of opinions $+1$ and $-1$, which can lead the process to the null list.
\end{itemize}
Therefore,
\begin{equation} \label{eq:cbetalemma0}
R^{\beta,l}(\st \setminus \pcons)\geq\min\{\tau_-^{(1)},\tau_-^{(2)}\}.
\end{equation}

The rate in which the process has an actor with positive social pressure expressing the opinion $-1$ is bounded above by $(N-1)e^{-\beta}$. This implies that,
for any $t>0$,
$$
\P(\tau_-^{(1)} > t) \geq \P(\text{Exp}((N-1)e^{-\beta}) >t),
$$
%Now, note that $\tau_-^{(1)}$ is stochastically greater than
where $\text{Exp}((N-1)e^{-\beta})$ is a random variable exponentially distributed with mean $e^{\beta}/(N-1).$  
%This inequality follows from the fact that

Let $n_0^-\mydef 0$ and for $j\geq 1$,
$$
n_j^-\mydef\inf\{n> n_{j-1}^-: O_n=-1, U_{T_{n-1}}^{\beta,l}(A_n)=0\}.
$$
We have that
$$
\tau^{(2)}_-=\sum_{j=1}^{J}(T_{n_j^-}-T_{n_{j-1}^-}),
$$
where $J\mydef \inf\{j\geq 1: n_{j}^-=n_{j-1}^-+1 \text{ or } n_{j}^-=n_{j-1}^-+2\}.$

The rate in which the process has an actor with null social pressure expressing the opinion $-1$ is bounded above by $N$. This implies that, for any $j\geq 1$ and for any $t>0$,
$$
\P(T_{n_{j}^-}-T_{n_{j-1}^-}>t) \geq \P(E_j> t),
$$
where $(E_j)_{j\geq 1}$ is a sequence of i.i.d. random variables exponentially distributed with mean $N$. Moreover,
\begin{equation} \label{eq:cbetal1}
\P(\{O_{n+2}=-1\}\cup \{O_{n+1}=-1\} \
| \ \tilde{U}_{n}^{\beta,l}\in \pcons) \leq 2\times \frac{(N-1)}{(N-1)+e^{\beta}}.
\end{equation}
Note that if $(X_n)_{n\geq 1}$ is a sequence of random variables assuming values in $\{0,1\}$ with $\P(X_{1}=1) \leq p \in (0,1)$ and $\P(X_{n+1}=1|X_n= \ldots = X_1=0) \leq p$, for $n\geq 1$, then for any $k\geq 1$,
$$
\P(\inf\{n\geq 1: X_n=1\} \geq k) = \prod_{n=1}^{k-1}\P\left(X_n=0 \ \Big| \ \bigcap_{j=1}^{n-1}\{X_j=0\} \right) \geq (1-p)^{k-1}. %\P(\text{Geom}(p) \geq k),
$$
%where Geom$(p)$ is a Geometric random variables assuming values in $\{1,2,\ldots\}$ and with parameter $p$.
Putting this inequality together with \eqref{eq:cbetalemma0} and \eqref{eq:cbetal1}, we conclude that
$$
\P(J \geq k \ | \ \tau_-^{(1)} > T_{n_{k-1}^-})=\P\left(\bigcap_{j=1}^{k-1} \left\{ n_j^- \not \in \{n_{j-1}^-+1, n_{j-1}^-+2\} \right\} \ \Big| \ \tau_-^{(1)} > T_{n_k^-} \right)
$$
$$
%\prod_{k=1}^n \P\left( \ \Big| \ \tau_-^{(1)} > T_{n_k^-}, \bigcap_{j=1}^{k-1} \left\{ n_j^- \not \in \{n_{j-1}^-+1, n_{j-1}^-+2\} \right\} \right)
\geq 
(1-\lambda_{\beta})^{k-1},
$$
where
$$
\lambda_{\beta}\mydef 2\times \frac{(N-1)}{(N-1)+e^{\beta}}.
$$

%Note that if $\P(X_n=a)\geq c$ for any $n\geq 1$

Therefore, for any $t>0$,
$$
\P(\tau_-^{(2)}>t |\tau_-^{(1)}> t ) \geq \P\left(\sum_{j=1}^GE_j> t\right),
$$
where $G$ is random variable independent from $(E_j)_{j\geq 1}$ with Geometric distribution assuming values in $\{1,2,...\}$ with parameter $\lambda_{\beta}$.
%$$
%\lambda_{\beta}\mydef 2\times \frac{(N-1)}{(N-1)+e^{\beta}}.
%$$
%+e^{\beta}}\frac{(N-1)}{(N-1)+e^{\beta}}.
This implies that for any $t>0$,
$$
\P(\tau_-^{(2)}>t |\tau_-^{(1)}> t ) \geq \P\left(\text{Exp}(N\times \lambda_{\beta})> t\right),
$$
where $\text{Exp}(N\times \lambda_{\beta})$ is a random variable exponentially distributed with mean $1/(N\times \lambda_{\beta})$.

Therefore, for any $t>0$,
%$\tau_-$ is stochastically greater than an exponential distribution with rate
$$
\P(R^{\beta,l}(\st \setminus \pcons)>t) \geq %\P(\tau_->t) 
\P[ \text{Exp}((N-1)e^{-\beta})>t]\P[ \text{Exp}(N\times \lambda_{\beta})>t]. %\frac{(N-1)}{(N-1)+e^{\beta}}.
$$
%\frac{(N-1)}{(N-1)+e^{\beta}}\right) 
This implies that
\begin{align*}
e^{-1}=\P(R^{\beta,l}(\nladder)>c_{\beta}) \geq \P(R^{\beta,l}(\st \setminus \pcons)>c_{\beta}) \geq e^{-c_\beta\left((N-1)e^{-\beta} +
N\times \lambda_{\beta}\right)} ,
\end{align*}
and therefore
$$
c_{\beta}\geq \left((N-1)e^{-\beta} + N\times \lambda_{\beta}\right)^{-1}.% \geq C_1e^{\beta}.
$$
With this we concluded the proof of Proposition \ref{cbetabound}.

\end{proof}

\section{Proof of Theorem \ref{existence}} \label{sec:existence}

To prove Part $1$ of Theorem \ref{existence} we study the first time in which the process has an opinion being expressed by an actor with social pressure in absolute value smaller than $N$ (recall that $N$ is the number of actors in the system). We prove
that starting from any initial list $u \in \st$, this occurs in the first $N$ jump times of the process. This is the content of Lemma \ref{inftmenor}. With this Lemma we are able to prove Part 1 of Theorem \ref{existence}.

%need to prove the following Lemma.

%To prove Part $1$ of Theorem \ref{existence} we first need to prove the following Lemma.

\begin{lemma} \label{inftmenor} For any list $u \in \st$, 
$$
\inf\left\{n\geq 1: |U^{\beta,u}_{T_{n-1}}(A_{n})|< N\right\} \leq N.
$$
\end{lemma}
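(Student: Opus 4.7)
The plan is to argue by contradiction. Suppose that $|U^{\beta,u}_{T_{n-1}}(A_n)| \geq N$ holds for every $n = 1, \ldots, N$. The whole argument rests on the elementary update rule of the embedded chain: at each jump $T_j$ the pressure on every actor $a \neq A_j$ changes by exactly $O_j \in \{-1,+1\}$, while the pressure on $A_j$ is reset to $0$.

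First I would fix any $a_0 \in \A$ with $u(a_0)=0$, which exists because $u \in \st$, and show by induction on $n$ that $a_0 \notin \{A_1,\ldots,A_N\}$. The inductive claim is that if $a_0$ has not yet been selected through step $n-1 \leq N-1$, then the update rule yields
$$U^{\beta,u}_{T_{n-1}}(a_0) \;=\; u(a_0) + \sum_{j=1}^{n-1} O_j \;=\; \sum_{j=1}^{n-1} O_j,$$
so that $|U^{\beta,u}_{T_{n-1}}(a_0)| \leq n-1 \leq N-1 < N$. The contradiction hypothesis $|U^{\beta,u}_{T_{n-1}}(A_n)| \geq N$ then forces $A_n \neq a_0$, which closes the induction.

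Consequently the $N$ selections $A_1,\ldots,A_N$ all lie in $\A\setminus\{a_0\}$, a set of cardinality $N-1$, so by pigeonhole there exists $a^{*} \in \A\setminus\{a_0\}$ chosen at least twice; let $n_1 < n_2$ be its first two selection times, both in $\{1,\ldots,N\}$. Then $U^{\beta,u}_{T_{n_1}}(a^{*}) = 0$, and since $a^{*} \neq A_j$ for $n_1 < j < n_2$, the same update computation yields
$$U^{\beta,u}_{T_{n_2-1}}(a^{*}) \;=\; \sum_{j=n_1+1}^{n_2-1} O_j,$$
so that
$$|U^{\beta,u}_{T_{n_2-1}}(A_{n_2})| \;=\; |U^{\beta,u}_{T_{n_2-1}}(a^{*})| \;\leq\; n_2 - 1 - n_1 \;\leq\; N-2 \;<\; N,$$
contradicting the hypothesis. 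The argument is purely combinatorial, driven by this pigeonhole on $\A\setminus\{a_0\}$ and by two applications of the linear-growth formula for an unselected actor's pressure. I do not anticipate any real obstacle beyond carefully indexing these two steps and verifying the base case $n=1$, for which $U^{\beta,u}_{T_0}(a_0)=u(a_0)=0$ makes the inductive bound trivially hold.
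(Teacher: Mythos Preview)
Your proof is correct and uses the same two ingredients as the paper: the linear-growth bound $|U^{\beta,u}_{T_{m}}(a)| \le m$ for an actor whose pressure was $0$ at time $T_0$ (or has been reset), and a pigeonhole argument on the first $N$ selections. The paper packages this as a direct case split on whether $|\{a_0,A_1,\ldots,A_{N-1}\}|=N$ rather than as a contradiction argument, but the combinatorial content is identical.
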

\begin{proof}

The initial list of social pressures $u$ belongs to $\st$. Therefore, there exists $a_0 \in \A$ such that $u(a_0)=0$.
By definition, for any $m\geq 1$,
$$
|U^{\beta,u}_{T_{m}}(a_0)|\leq \left|\sum_{j=1}^mO_{j}\right| \leq \sum_{j=1}^m\left|O_{j}\right|=m.
$$
More generally,
if $A_n=a$ then for any $m\geq 1$,
$$
|U^{\beta,u}_{T_{n+m}}(a)|\leq \left|\sum_{j=1}^mO_{n+j}\right| \leq \sum_{j=1}^m\left|O_{n+j}\right|=m.
$$
Therefore, if
$$
|\{a_0,A_1,...,A_{N-1}\}|=N,
$$
then
$$
|U^{\beta,u}_{T_{N-1}}(A_N)| \leq \max \left\{|U^{\beta,u}_{T_{N-1}}(a_0)|, |U^{\beta,u}_{T_{N-1}}(A_1)|, \ldots, |U^{\beta,u}_{T_{N-1}}(A_{N-1})|\right\} \leq
$$
$$
\max\{N-1,N-2, \ldots, 0\}=N-1.
$$
In other words, if on the first $N-1$ steps of the process we have $N-1$ different actors expression opinions, with all these actors being different from $a_0$,
then the absolute value of the social pressure on the actor expressing an opinion at instant $T_N$ is smaller than $N$. This implies that
%This implies that
$$
\inf\left\{n\geq 1: |U^{\beta,u}_{T_{n-1}}(A_{n})|< N\right\} \leq N.
$$

Now, if
$$
|\{a_0,A_1,...,A_{N-1}\}|\leq N-1,
$$
there exists $m\in \{1,...,N-1\}$ such that 
$$
A_m \in \{a_0,A_1,..., A_{m-1}\}.
$$
This implies that
$$
|U^{\beta,u}_{T_{m-1}}(A_{m})|\leq m \leq N-1,
$$
and therefore,
$$
\inf\left\{n\geq 1: |U^{\beta,u}_{T_{n-1}}(A_{n})|< N\right\} \leq m < N.
$$

\end{proof}

Define $T_0^<=T_0^>=0$
and for any $k\geq 1$,
$$
T_k^< \mydef \inf\{T_n>T_{k-1}^<: |U^{\beta,u}_{T_{n-1}}(A_{n})|<N\},
$$
$$
T_k^> \mydef \inf\{T_n>T_{k-1}^>: |U^{\beta,u}_{T_{n-1}}(A_{n})|\geq N\}.
$$
Lemma \ref{inftmenor} implies that $T_k^<$ is well defined for any $k\geq 1$. Now we can prove Part 1 of Theorem \ref{existence}.
%Lemma \ref{supinfinity} implies that 
 
\begin{proof}%[Proof (Theorem \ref{existence} - Part 1)]

To prove Part 1 of Theorem \ref{existence}, we will construct the process $(U_t^{\beta,u})_{t \in [0,+\infty)}$ with jump times $\{T_n: n\geq 1\}$ as the superposition of two process with jump times $\{T_k^<: k\geq 1\}$ and $\{T_k^>: k\geq 1\}$ in the following way. For any $v \in \st$, the jump rates of these two processes are
$$
q^<(v)\mydef 
\sum_{a \in A} \mathbf{1}\{|v(a)|< N\}(e^{\beta v(a)}+e^{-\beta v(a)})
$$
and
$$
q^>(v)\mydef 
\sum_{a \in A} \mathbf{1}\{|v(a)|\geq  N\}(e^{\beta v(a)}+e^{-\beta v(a)}).
$$
For any list $v \in \st$, we have that
$$
2\leq q^<(v) \leq \lambda,
$$
with
$$
\lambda \mydef N(e^{\beta(N-1)}+e^{-\beta(N-1)}).
$$

For any $v \in \st$, we define the functions $\Phi^{<,-}_v, \Phi^{<,+}_v, \Phi^{>,-}_v, \Phi^{>,+}_v: \{0\}\cup \A \to [0,\infty)$ inductively as follows. First, we have that
$$
\Phi^{<,-}_v(0)\mydef 0, \quad  \Phi^{<,+}_v(0)\mydef 0, \quad \Phi^{>,-}_v(0) \mydef \lambda \quad \text{ and } \quad \Phi^{>,+}_v(0)\mydef \lambda.
$$
For any $a \in \A$, define 
$$
\Phi^{<,-}_v(a)\mydef\Phi^{<,+}_v(a-1)+\mathbf{1}\{|v(a)| < N\} e^{-\beta v(a)},
$$
$$
\Phi^{<,+}_v(a)\mydef\Phi^{<,-}_v(a)+\mathbf{1}\{|v(a)| < N\} e^{+\beta v(a)},
$$
$$
\Phi^{>,-}_v(a)\mydef\Phi^{>,+}_v(a-1)+\mathbf{1}\{|v(a)| \geq N\} e^{-\beta v(a)},
$$
$$
\Phi^{>,+}_v(a)\mydef\Phi^{>,-}_v(a)+\mathbf{1}\{|v(a)| \geq  N\} e^{+\beta v(a)}.
$$

Note that for any $v\in \st$, we have that
\begin{multline*}
0=\Phi^{<,-}_v(0)=\Phi^{<,+}_v(0)\leq \Phi^{<,-}_v(1)\leq \Phi^{<,+}_v(1) \leq \ldots 
\\
\leq \Phi^{<,-}_v(N-1) \leq \Phi^{<,+}_v(N-1)\leq \Phi^{<,-}_v(N) \leq \Phi^{<,+}_v(N)=q^<(v) \leq \lambda.
\end{multline*}
Moreover,
\begin{multline*}
\lambda=\Phi^{>,-}_v(0)=\Phi^{>,+}_v(0)\leq \Phi^{>,-}_v(1)\leq \Phi^{>,+}_v(1) \leq \ldots 
\\
\leq \Phi^{>,-}_v(N-1) \leq \Phi^{>,+}_v(N-1)\leq \Phi^{>,-}_v(N) \leq \Phi^{>,+}_v(N)=\lambda+q^>(v).
\end{multline*}
Therefore, for any $v\in \st$, $\{\Phi^{<,-}_v(b),\Phi^{<,+}_v(b): b\in \{0\}\cup \A\}$ is a partition of the interval $[0,q^<(v))$ in which for any
$a\in\A$ such that $v(a) < N$, the length of $[\Phi^{<,+}_v(a-1), \Phi^{<,-}_v(a))$ is $e^{-\beta v(a)}$ (exactly the rate in which the actor $a$ expresses an opinion $-1$ associated to the list $v$) and the length of $[\Phi^{<,-}_v(a), \Phi^{<,+}_v(a))$ is $e^{\beta v(a)}$ (exactly the rate in which the actor $a$ expresses an opinion $+1$ associated to the list $v$). Similarly, $\{\Phi^{>,-}_v(b),\Phi^{>,+}_v(b): b\in \{0\}\cup \A\}$ is a partition of the interval $[\lambda, \lambda + q^>(v))$ in which for any
$a\in\A$ such that $v(a) \geq N$, the length of $[\Phi^{>,+}_v(a-1), \Phi^{>,-}_v(a))$ is $e^{-\beta v(a)}$ and the length of $[\Phi^{>,-}_v(a), \Phi^{>,+}_v(a))$ is $e^{\beta v(a)}$. 

Using these partitions indexed by $\st$ (see Figure \ref{fig:M2}), we now construct the process $(U_t^{\beta,u})_{t \in [0,+\infty)}$ as follows. Consider a homogeneous rate $1$ Poisson point process in the plane $[0,+\infty)^2$. Call $\mathcal{N}$ the counting measure of this process. Given the initial list $u \in \st$, define
$$
T_1 = \inf\left\{t>0: \mathcal{N}\Big((0,t]\times \left\{ [0,q^<(u)) \cup [\lambda,\lambda+q^>(u))\right\}\Big)=1\right\}.
$$
Denoting $R_1$ as the second coordinate of the mark $(T_1,R_1)$ of $\mathcal{N}$, we have
$$
\text{$A_1=b$, $O_1=-1$, \quad if } \quad R_1 \in [\Phi^{<,+}_{u}(b-1),\Phi^{<,-}_{u}(b)) \cup [\Phi^{>,+}_{u}(b-1),\Phi^{>,-}_{u}(b)),
$$
$$
\text{ $A_1=b$, $O_1=+1$, \quad if} \quad R_1 \in [%\Phi^{<,-}_{v}(N)+
\Phi^{<,-}_{u}(b),%\Phi^{<,-}_{v}(N)+
\Phi^{<,+}_{u}(b)) \cup [%\Phi^{>,-}_{v}(N)+
\Phi^{>,-}_{u}(b),%\Phi^{>,-}_{v}(N)+
\Phi^{>,+}_{u}(b)).
$$
At time $T_1$, we have $U_{T_1}^{\beta,u}=\pi^{A_1,O_1}(u)$. 

\begin{figure}[h]
\centering
\begin{tikzpicture}
\fill[yellow!80!white] (0,2.8) rectangle (9,3.5);

\fill[orange!80!white] (0,0) rectangle (9,2.8);
\fill[blue!40!white] (0,3.5) rectangle (9,8);

\draw[thick,dashed] (0,3.5)  node[anchor=east] {$\lambda$} -- (9.5,3.5);

\draw[thick,dashed] (0,8)  node[anchor=east] {$\Phi_u^{>,+}(5)$} -- (9.5,8);

\draw[thick,dashed] (0,3.75)   -- (9.5,3.75) node[anchor=west] {$\Phi_u^{>,-}(2)$};

\draw[thick,dashed] (0,4.7)  node[anchor=east] {$\Phi_u^{>,+}(2)$} -- (9.5,4.7);

\draw[thick,dashed] (0,6.2)   -- (9.5,6.2) node[anchor=north west] {$\Phi_u^{>,-}(4)$};

\draw[thick,dashed] (0,6.3)  node[anchor= south east] {$\Phi_u^{>,+}(4)$} -- (9.5,6.3);

\draw[thick,dashed] (0,7.8)   -- (9.5,7.8) node[anchor=west] {$\Phi_u^{>,-}(5)$};
%\draw[thick,dashed] (0,6.2)  node[anchor=east] {$\Phi_u^{>,-}(N)$} -- (6.5,6.2);

\draw[thick,dashed] (0,0.4)  -- (9.5,0.4) node[anchor=west] {$\Phi_u^{<,-}(1)$};
\draw[thick,dashed] (0,0.8) node[anchor=east] {$\Phi_u^{<,+}(1)$} -- (9.5,0.8);
\draw[thick,dashed] (0,1.4)  -- (9.5,1.4) node[anchor=west] {$\Phi_u^{<,-}(3)$};

\draw[thick,dashed] (0,1.7) node[anchor=east] {$\Phi_u^{<,+}(3)$} -- (9.5,1.7);
\draw[thick,dashed] (0,2.5)  -- (9.5,2.5) node[anchor=west] {$\Phi_u^{<,-}(6)$};
\draw[thick,dashed] (0,2.8) node[anchor=east] {$\Phi_u^{<,+}(6)$} -- (9.5,2.8);

\draw[thick,->] (0,0) -- (9.5,0) node[anchor=north west] {};
\draw[thick,->] (0,0) -- (0,8.7) node[anchor=south east] {}; %8.5

\draw[thick,dashed] (6,0)  node[anchor=north] {$T_1$} -- (6,8.3);

\draw [fill] (1.8,8.25) circle [radius=0.05];

\draw [fill] (3.8,3.25) circle [radius=0.05];

\draw [fill] (6,4.25) circle [radius=0.05];

\draw [fill] (7.3,2.25) circle [radius=0.05];

\draw [fill] (8.7,6.85) circle [radius=0.05];

\end{tikzpicture}
\caption{This figure represents the regions of  $[0,\infty) \times [0,\lambda+q^>(u)]$ and the marks of the Poisson point process $\mathcal{N}$ in $[0,\infty)^2$ considered on the construction of $T_1$. 
In this example, $N=6$ and the list $u \in \st$, satisfies $u(a)<N$ for $a=1,3,6$ and $u(a)\geq N$ for $a=2,4,5$. 
The height of each orange sub-region of the figure is exactly the rate in which one of the actors with social pressure smaller than $N$ (actors $1,3$ and $6$) expresses one of the possible opinions ($+1$ and $-1$). 
The height of each blue sub-region of the figure is exactly the rate in which one of the actors with social pressure greater or equal than $N$ (actors $2,4$ and $5$) expresses one of the possible opinions.
%Each orange sub-region of the figure is associated to an actor with social pressure smaller than $N$ (actors $1,3$ and $6$) and to an opinion ($+1$ and $-1$). Each blue sub-region of the figure is associated to an actor with social pressure greater or equal than $N$ (actors $2,4$ or $5$) and to an opinion.
The points in the figure are the marks of $\mathcal{N}$ in this region.
$T_1$ is the time of the first mark of the Poisson point process inside the orange or blue area. Note that the marks of $\mathcal{N}$ higher than $\lambda+q^>(u)$ or in $[0,\infty]\times [q^<(u),\lambda)$ (the yellow region) are not considered to define $T_1$. In this example, the position of the mark at time $T_1$ (between $\Phi_u^{>,-}(2)$ and $\Phi_u^{>,+}(2)$) indicates that $A_1=2$ and $O_1=+1$.
} \label{fig:M2}
\end{figure}

More generally, for $n\geq 1$, we have
$$
T_n = \inf\left\{t>T_{n-1}: \mathcal{N}\Big((T_{n-1},t]\times \left\{ [0,q^<(U_{T_{n-1}}^{\beta,u})) \cup [\lambda,\lambda+q^>(U_{T_{n-1}}^{\beta,u}))\right\}\Big)=1\right\}.
$$
Denoting $R_n$ as the second coordinate of the mark $(T_n,R_n)$ of $\mathcal{N}$, we have 
$$
\text{$A_n=b$, $O_n=-1$, \quad if } \quad R_n \in \left[\Phi^{<,+}_{U_{T_{n-1}}^{\beta,u}}(b-1),\Phi^{<,-}_{U_{T_{n-1}}^{\beta,u}}(b)\right) \cup \left[\Phi^{>,+}_{U_{T_{n-1}}^{\beta,u}}(b-1),\Phi^{>,-}_{U_{T_{n-1}}^{\beta,u}}(b)\right),
$$ 
$$
\text{$A_n=b$, $O_n=+1$, \quad if} \quad R_n \in \left[%\Phi^{<,-}_{v}(N)+
\Phi^{<,-}_{U_{T_{n-1}}^{\beta,u}}(b),%\Phi^{<,-}_{v}(N)+
\Phi^{<,+}_{U_{T_{n-1}}^{\beta,u}}(b)\right) \cup \left[%\Phi^{>,-}_{v}(N)+
\Phi^{>,-}_{U_{T_{n-1}}^{\beta,u}}(b),%\Phi^{>,-}_{v}(N)+
\Phi^{>,+}_{U_{T_{n-1}}^{\beta,u}}(b)\right).
$$
At time $T_n$, we have $U_{T_n}^{\beta,u}=\pi^{A_n,O_n}(U_{T_{n-1}}^{\beta,u})$ 
(see Figure \ref{fig:M1}).

\begin{figure}[h]
\centering
\begin{tikzpicture}
\fill[yellow!80!white] (0,2) rectangle (6,3);
\fill[orange!80!white] (0,0) rectangle (6,2);
\fill[blue!40!white] (0,3) rectangle (6,4);

\draw[thick,dashed] (0,3)  node[anchor=east] {$\lambda$} -- (6.5,3);
\draw[thick,dashed] (0,4)  node[anchor=east] {$\lambda+q^>(u)$} -- (6.5,4);
\draw[thick,dashed] (0,2) node[anchor=east] {$q^<(u)$} -- (6.5,2);

\fill[yellow!80!white] (6,1.5) rectangle (9,3);
\fill[orange!80!white] (6,0) rectangle (9,1.5);
\fill[blue!40!white] (6,3) rectangle (9,5);

\draw[thick,dashed] (6,3)  node[anchor=east] {} -- (9,3);
\draw[thick,dashed] (6,5)  node[anchor=east] {$\lambda+q^>(U_{T_1}^{\beta,u})$} -- (9,5);
\draw[thick,dashed] (6,1.5) node[anchor=east] {$q^<(U_{T_1}^{\beta,u})$} -- (9,1.5);

\fill[yellow!80!white] (9,1.75) rectangle (10,3);
\fill[orange!80!white] (9,0) rectangle (10,1.75);
\fill[blue!40!white] (9,3) rectangle (10,4.2);

\draw[thick,dashed] (9,3)  node[anchor=east] {} -- (10,3);
\draw[thick,dashed] (9,4.2)  node[anchor=east] {$\lambda+q^>(U_{T_2}^{\beta,u})$} -- (10,4.2);
\draw[thick,dashed] (9,1.75) node[anchor=east] {$q^<(U_{T_2}^{\beta,u})$} -- (10,1.75);

\draw[thick,->] (0,0) -- (11,0) node[anchor=north west] {};
\draw[thick,->] (0,0) -- (0,6) node[anchor=south east] {};

\draw[thick,dashed] (6,0)  node[anchor=north] {$T_1$} -- (6,5.5);
\draw[thick,dashed] (9,0)  node[anchor=north] {$T_2$} -- (9,5.5);

\draw (10,-0.11) node[anchor=north] {$\ldots$};
\end{tikzpicture}
\caption{The regions of the plane %intervals
$(T_n,T_{n+1})\times [0,q^{<}(U_{T_n}^{\beta,u})]$ and $(T_n,T_{n+1})\times[\lambda,\lambda+q^{>}(U_{T_n}^{\beta,u})]$, for $n=0,1,2,\ldots$.} \label{fig:M1}
\end{figure}

For any $n\geq 1$, we have that
$$
T_n^<= \inf\left\{T_m>T_{n-1}^<: R_m \in \big[0, q^<(U_{T_{m-1}}^{\beta,u})\big)\right\}, %\geq \inf\{S_n: R_n \in [0, \lambda]\}.
$$
$$
T_n^>= \inf\left\{T_m>T_{n-1}^>: R_m \in \big[\lambda, \lambda+q^>(U_{T_{m-1}}^{\beta,u})\big)\right\}. %\geq \inf\{S_n: R_n \in [0, \lambda]\}.
$$
Define $T_0^{\lambda} \mydef 0$ and for any $n\geq 1$,
$$
T_n^{\lambda} \mydef \inf\left\{t>T_{n-1}^{\lambda}: \mathcal{N}\Big((T_{n-1}^{\lambda},t]\times  [0,\lambda)\Big)=1\right\}.
$$
By construction, $\{T_n^{<}: n\geq 1\} \subset \{T_n^{\lambda}: n\geq 1\}$. Also, $\{T_n^{\lambda}: n\geq 1\}$ are the marks of a homogeneous Poisson point process with rate $\lambda$. Since $\P(\sup\{T_n^{\lambda}: n\geq 1\}=+\infty)=1$, we have that $\P(\sup\{T_n^{<}: n\geq 1\}=+\infty)=1$. By Lemma \ref{inftmenor}, the event $\sup\{T_n^{<}: n\geq 1\}=+\infty$ implies that $\sum_{n=1}^{+\infty} \mathbf{1}\{T_n^{>}\leq t\}<\infty$, for any $t>0$. 
%$\P(\sum_{n=1}^{+\infty} \mathbf{1}\{T_n^{>}\leq t\}<\infty)=1$, for any $t>0$. 
%$\sup\{T_n^{>}: n\geq 1\}=+\infty$. 
$\{T_n: n\geq 1\}$ is the superposition of $\{T_n^<: n\geq 1\}$ and $\{T_n^>: n\geq 1\}$, and then, we conclude that $\P(\sup\{T_n: n\geq 1\}=+\infty)=1$.
\end{proof}

%To conclude the proof of Theorem \ref{existence}, we will  
To prove Part 2 of Theorem \ref{existence}, we first show that starting from any initial list, the process has a positive and bounded probability to reach a fixed ladder list after $2N$ jumps of the process. With this, we are able to prove that $(U_t^{\beta,u})_{t \in [0,+\infty]}$ is an ergodic Markov process.

\begin{proof}
To prove Part $2$ of Theorem \ref{existence}, let $l \in \pladder$ satisfies $l(a)=a-1$, for all $a \in \A$.
For any $u \in \st$, we have that
$$
l=\pi^{1,+1} \circ \pi^{2,+1} \circ \ldots \circ \pi^{N,+1}(u),
$$
and then, if the event $\bigcap_{j=1}^{N}\{A_{j}=N-j+1, O_j=+1\}$ occurs, then
$
\tilde{U}^{\beta,u}_N=l.
$
This implies that for any $u \in \st$,
\begin{equation} \label{eq:l}
\P\left(\tilde{U}_{N}^{\beta,u}=l\right) >0.
\end{equation}

For any $u' \in \mathcal{S}$,
\begin{align*}
    &\P\Big(\tilde{U}^{\beta,u}_{n+2N}=l\ \Big|\ \tilde{U}^{\beta,u}_n=u'\Big) \geq \\ 
    &\P\left(\bigcap_{a \in \mathcal{A}}\{|\tilde{U}_{n+N}^{\beta,u}(a)|< N\} \Big| \ \tilde{U}_n^{\beta,u}=u'\right)\P\left(\tilde{U}_{n+2N}^{\beta,u}=l \ \Big|\ \bigcap_{a \in \mathcal{A}}\{|\tilde{U}_{n+N}^{\beta,u}(a)|< N\}\right).
\end{align*}
For any $u \in \st$, note that
$$
\P\left(\bigcap_{a \in \A}\{ |\tilde{U}_{N}^{\beta,u}(a)|< N  \} \  \Big| \ \bigcap_{j=1}^NM_j^u\right) =1.
$$
This together with Proposition \ref{teo:m1m} implies that
$$
 \P\left(\bigcap_{a \in \A}\{ |\tilde{U}_{N}^{\beta,u}(a)|< N  \}\right) \geq \P\left(\bigcap_{j=1}^NM_j^u\right) \geq \zeta_{\beta}^N.
$$
Also, %since the social pressure of all actors are bounded at instant $n+N$, 
there exists $\epsilon^*>0$ such that
$$
\P\left(\tilde{U}_{n+2N}^{\beta,u}=l \ \Big|\ \bigcap_{a \in \mathcal{A}}\{|\tilde{U}_{n+N}^{\beta,u}(a)|< N\}\right) \geq %\epsilon^*.
$$
$$
\min\left\{\P\left(\tilde{U}_{n+2N}^{\beta,u}=l \ \Big|\ \tilde{U}_{n+N}^{\beta,u}=v\right) : v \in \st, \bigcap_{a \in \mathcal{A}}\{|v(a)|< N\} \right\} = \epsilon^*.
$$
%The last equation with Proposition \ref{teo:m1m} implies that %there exists $p^*>0$ such that
Note that $\epsilon^*>0$ since 
it is the minimum of a finite set of positive numbers, by \eqref{eq:l}.
Therefore, for any $u' \in \st$,
$$
\P\Big(\tilde{U}^{\beta,u}_{n+2N}=l\ \Big|\ \tilde{U}^{\beta,u}_n=u'\Big) \geq \zeta_{\beta}^N\epsilon^*.
$$

Recall that $\tilde{R}^{\beta,l}(l)=\inf\{n\geq 1:\tilde{U}^{\beta,l}_n=l\}.$
The last inequality implies that for any $t>0$,
$$
\P(\tilde{R}^{\beta,l}(l)>t) \leq \P(2N \times \text{Geom}(\zeta_{\beta}^N\epsilon^*)>t),
$$
where $\text{Geom}(r)$ denotes a random variable with geometric distribution assuming values in $\{1,2,\ldots\}$ and with parameter $r \in (0,1)$. This implies that $\mathbb{E}(\tilde{R}^{\beta,l}(l))< +\infty$ and then,
$(\tilde{U}^{\beta,u}_n)_{n\geq 0}$ is a positive-recurrent Markov chain.

The jump rate of the process $(U_t^{\beta,u})_{t\in [0,+\infty)}$ satisfies
$$
%q_{\beta}(u)= 
\sum_{a \in \A}(e^{\beta v(a)}+e^{-\beta v(a)}) \geq 2N,
$$
for any $v \in \st$. Putting all this together we conclude that $(U_t^{\beta,u})_{t\in [0,+\infty)}$ is ergodic.

\end{proof}

%As a consequence, $(U_t^{\beta})_{t\in [0,+\infty)}$ has a unique invariant measure, that we will denote $\mu^{\beta}$.

%\section{Mass concentration on positive or negative lists}

\section{Proof of Theorem \ref{fastconsensus}} \label{sec: fastconsensus}

%all Recall that in Theorem \ref{fastconsensus} we assume that $\mathcal{V}_{\cdot \to a}=\A\setminus \{a\}$, for any $a\in \A$.

To prove Part $1$ of Theorem \ref{fastconsensus}, we first study the invariant probability measure of the embedded process $(\tilde{U}_n^{\beta,u})_{n\geq 0}$. We prove that the invariant measure of a list $u \notin \ladder$ such that $\max\{|u(a)|: a \in \A\} < N$ is small, decreasing when $\beta$ increases. We use the fact that, starting from $u$, the embedded process quickly reaches the set of ladder lists, taking a long time to return to the initial list $u$. This is the content of Proposition \ref{decay}. With this we are able to prove Part $1$ of Theorem \ref{fastconsensus}, using the relation between the invariant probability measure of $(\tilde{U}_n^{\beta,u})_{n\geq 0}$ and $(U_t^{\beta,u})_{t\in [0,+\infty)}$. 

%To prove Part $1$ of Theorem \ref{fastconsensus} we will need the  following Proposition.

\begin{proposition} \label{decay}
For any $\beta \geq 0$ and $u \notin \ladder$ such that $\max\{|u(a)|: a \in \A\} < N$ we have that
    $$
    \tilde{\mu}^{\beta}(u) \leq C'e^{-\beta(N-1)},
    $$
    where $C'=C'(N)>0$.
\end{proposition}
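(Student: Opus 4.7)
The plan is to start from the stationary equation for the embedded chain,
$$\tilde{\mu}^{\beta}(u)=\sum_v \tilde{\mu}^{\beta}(v)\,p(v,u),$$
and bound each summand by analyzing the structure of the predecessors of $u$. For each pair $(a,\mathfrak{o})\in\A\times\opn$ with $u(a)=0$, a predecessor of $u$ is a state $v$ with $v(b)=u(b)-\mathfrak{o}$ for $b\neq a$ and $v(a)=k\in\Z$ (subject to $v\in\st$), with transition rate $r(v,u)=e^{\beta\mathfrak{o} k}$ and $p(v,u)=e^{\beta\mathfrak{o} k}/q(v)$, where $q(v)=\sum_{b,\mathfrak{o}'}e^{\beta\mathfrak{o}' v(b)}$.

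I would split the predecessors into three groups: (i) ladders $v\in\ladder$; (ii) non-ladder predecessors with $\max_b|v(b)|<N$; and (iii) predecessors with $\max_b|v(b)|\geq N$, which in the parametrization above amounts to $|k|\geq N$. For group (i), the key observation is that every one-step transition from a ladder $l$ into a state $u'\notin\ladder$ with $\max|u'|<N$ must use the opinion opposite to the ladder's orientation, and hence has rate at most $1$; since $q(l)\geq e^{\beta(N-1)}$, we get $p(l,u)\leq e^{-\beta(N-1)}$. As there are only finitely many ladder predecessors of a given $u$, this group contributes at most $C_1\, e^{-\beta(N-1)}$ to $\tilde{\mu}^{\beta}(u)$.

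For groups (ii) and (iii) the transition probability $p(v,u)$ can be of order $1$, so the smallness must come from $\tilde{\mu}^{\beta}(v)$ itself. For (ii), the $v$'s are finitely many and each satisfies the same bound being proved, so I would handle them together with $u$ via a simultaneous estimate. For (iii), the bound $q(v)\geq e^{\beta|k|}$ (from the actor at coordinate $k$) yields $p(v,u)\leq e^{-\beta(|k|-\mathfrak{o}k)}$, and the mean-reverting nature of each coordinate (the actor at an extreme coordinate jumps back at rate $\geq e^{\beta|k|}$) is expected to imply a tail decay $\tilde{\mu}^{\beta}(v)\leq C\,e^{-\beta|k|}$, so the resulting sum over $|k|\geq N$ is of order $e^{-\beta(N-1)}$.

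The hard part will be controlling groups (ii) and (iii) simultaneously, which cannot be achieved by the one-step bound of Proposition~\ref{teo:m1m} alone. The cleanest route I see is a Foster--Lyapunov argument: define $V:\st\to\R_+$ with $V\equiv 1$ on $\ladder$, $V\equiv e^{\beta(N-1)}$ on the set of non-ladder bounded states, and $V(v)=e^{\beta\max_b|v(b)|}$ on $\{v:\max_b|v(b)|\geq N\}$, then verify a drift inequality
$$\E\bigl[V(\tilde{U}^{\beta,v}_{1})\bigr]\leq \lambda\,V(v)+b\,\mathbf{1}_{\ladder}(v), \qquad \lambda<1.$$
By stationarity this forces $\E_{\tilde{\mu}^{\beta}}[V]\leq b/(1-\lambda)$, and since $V(u)=e^{\beta(N-1)}$ for $u$ as in the proposition, one concludes $\tilde{\mu}^{\beta}(u)\leq V(u)^{-1}\cdot b/(1-\lambda)=C'\,e^{-\beta(N-1)}$.
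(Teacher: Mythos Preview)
Your approach is genuinely different from the paper's---the paper uses Kac's lemma $\tilde{\mu}^{\beta}(u)=1/\E[\tilde{R}^{\beta,u}(u)]$ and lower-bounds the expected return time by showing that the chain reaches $\ladder$ within $3(N-1)$ steps (Proposition~\ref{teo:m1m}) and then follows a ``good'' trajectory $\{O_j\tilde{U}_{j-1}^{\beta,l}(A_j)>0\}$ for a geometric time with failure probability $1-\eta\asymp e^{-\beta(N-1)}$, never revisiting $u$. Your stationary-equation/Lyapunov route is in principle a reasonable alternative, but the specific Lyapunov function you propose does not give the bound.

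The concrete failure is twofold. First, from a ladder $l\in\pladder$ with pressures $\{0,1,\ldots,N-1\}$, any actor with pressure $j<N-1$ expressing opinion $+1$ sends the chain to a state whose maximum is exactly $N$; this happens with probability $\sum_{j=0}^{N-2}e^{\beta j}/q_\beta(l)\asymp e^{-\beta}$, and since your $V$ assigns $e^{\beta N}$ to such a state, the drift from the small set satisfies $\E[V(\tilde{U}^{\beta,l}_1)]\gtrsim e^{-\beta}\cdot e^{\beta N}=e^{\beta(N-1)}$, forcing $b\gtrsim e^{\beta(N-1)}$. Plugging this into $\tilde{\mu}^{\beta}(u)\leq V(u)^{-1}b/(1-\lambda)$ with $V(u)=e^{\beta(N-1)}$ yields only $\tilde{\mu}^{\beta}(u)\leq C/(1-\lambda)$, not the claimed $C'e^{-\beta(N-1)}$. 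Second, the one-step contraction $\lambda<1$ fails outright for $N\geq 4$: take $v=(0,1,1,\ldots,1)$, which is non-ladder with $\max|v|<N$, so $V(v)=e^{\beta(N-1)}$. Every one-step successor of $v$ (under any $(a,\mathfrak{o})$) is again a non-ladder state with maximum strictly below $N$, hence $\E[V(\tilde{U}^{\beta,v}_1)]=e^{\beta(N-1)}=V(v)$ and no $\lambda<1$ works. A one-step Lyapunov bound cannot succeed here because the chain genuinely needs several steps to return to $\ladder$ from a generic bounded non-ladder state; this is exactly why the paper's argument routes through the multi-step Proposition~\ref{teo:m1m} rather than a one-step drift.
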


\begin{proof}
First, we have that
\begin{equation} \label{eq:decay1}
\mathbb{E}(\tilde{R}^{\beta,u}(u)) =\sum_{m=1}^{+\infty}\P(\tilde{R}^{\beta,u}(u)\geq m) \geq \sum_{m=3(N-1)}^{+\infty}\P(\tilde{R}^{\beta,u}(u)\geq m).
\end{equation}

For any $u \notin \ladder$ such that $\max\{|u(a)|: a \in \A\} < N$, by Proposition \ref{teo:m1m} and Corollary \ref{coro:return},
\begin{equation} \label{eq:prop11_1}
 \P\left(\{\tilde{U}_{n}^{\beta,u} \neq u, \text{ for } n=1,2,...,3(N-1)\}\cap \{\tilde{U}_{3(N-1)}^{\beta,u} \in\ladder\}\right) \geq \P\left(\bigcap_{j=1}^{3(N-1)} M_j^u\right).
\end{equation}

Note that if 
$$
v\in \{u \in \st: 0\leq u(a_1) < u(a_2) < \ldots < u(a_N), \text{ for } \{a_1,\ldots, a_N\} = \A\}
$$
and $O_1=+1$, then
\begin{equation} \label{eq:prop11eq}
\tilde{U}_1^{\beta,v}\in \{u \in \st: 0\leq u(a_1) < u(a_2) < \ldots < u(a_N), \text{ for } \{a_1,\ldots, a_N\} = \A\}.
\end{equation}
Moreover, note that 
$$
\pladder \subset \{u \in \st: 0\leq u(a_1) < u(a_2) < \ldots < u(a_N), \text{ for } \{a_1,\ldots, a_N\} = \A\}
$$
and note that if 
$$
v \in \{u \in \st: 0\leq u(a_1) < u(a_2) < \ldots < u(a_N), \text{ for } \{a_1,\ldots, a_N\} = \A\} \setminus \pladder,
$$
then $\max\{|v(a)|: a \in \A\}\geq N$. By the symmetric properties of the process, this implies that for any $l \in \ladder$ and for any $m\geq 1$, if
$$
\bigcap_{j=1}^m 
\{O_j \tilde{U}_{j-1}^{\beta,l}(A_{j})>0\}
$$
occurs, then 
$$
\tilde{U}_{j}^{\beta,l} \in \ladder \cup \{u \in \st: \max\{|u(a)|: a \in \A\}\geq N\}
$$
for all $j=1,\ldots, m$. 

In other words, starting from a ladder list, if the process has a sequence of opinions expressed by actors with non-null social pressure and in the same direction of their social pressure, then either the process is in a ladder list or there is at least one actor with social pressure greater than or equal to $N$.
Since $u \notin \ladder$ and  $\max\{|u(a)|: a \in \A\} < N$, for any $l \in \ladder$ and for any $m\geq 1$, we have that
\begin{equation} \label{eq:prop11_2}
\P\left(\{\tilde{U}_{n}^{\beta,l} \neq u, \text{ for } n=1,2,...,m\}\right)\geq \P\left(\bigcap_{j=1}^m 
\{O_j \tilde{U}_{j-1}^{\beta,l}(A_{j})>0\}\right).
\end{equation}
Therefore, for any $m\geq 1$, putting together \eqref{eq:prop11_1}, \eqref{eq:prop11_2} and the Markov property, we have that
\begin{equation} \label{eq:decay2}
\P(\tilde{R}^{\beta,u}(u)\geq 3(N-1)+m)\geq \P\left(\bigcap_{j=1}^{3(N-1)}M_j^u\right)
\P\left(\  \bigcap_{j=1}^{m} \{O_j \tilde{U}_{j-1}^{\beta,l}(A_{j})>0\}\right).%\P\left(\  \bigcap_{j=3(N-1)+1}^m \{O_j \tilde{U}_{j-1}^{\beta,u}(A_{j})>0\}\right).
\end{equation}

Note that for any
$$
v\in \{u \in \st: 0\leq u(a_1) < u(a_2) < \ldots < u(a_N), \text{ for } \{a_1,\ldots, a_N\} = \A\},
$$
we have that
$$
\P\left(
\{O_1 \tilde{U}_{0}^{\beta,v}(A_{1})>0\}\right) \geq \eta,
$$
where
$$
\eta \mydef \frac{\displaystyle\sum_{j=1}^{N-1}e^{\beta j}}{\displaystyle\sum_{j=0}^{N-1}(e^{\beta j}+e^{-\beta j})}.
$$
Therefore, for any $l \in \ladder$, putting together \eqref{eq:prop11eq}, the Markov property and the symmetric properties of the process, we have that
\begin{equation} \label{eq:decay3}
\P\left(\bigcap_{j=1}^m 
\{O_j \tilde{U}_{j-1}^{\beta,l}(A_{j})>0\}\right) \geq \eta^m.
\end{equation}
%where
%$$
%\eta \mydef \frac{\displaystyle\sum_{j=1}^{N-1}e^{\beta j}}{\displaystyle\sum_{j=0}^{N-1}(e^{\beta j}+e^{-\beta j})}.
%$$
By Proposition \ref{teo:m1m} and Equations \eqref{eq:decay1}, \eqref{eq:decay2} and  \eqref{eq:decay3}, it follows that
$$
\mathbb{E}(\tilde{R}^{\beta,u}(u))\geq %\sum_{m=3(N-1)}^{\infty}\P(\tilde{R}^{\beta,u}(u)\geq m) \geq 
\P\left(\bigcap_{j=1}^{3(N-1)}M_j^u\right) \left( 1+\sum_{m=1}^{\infty}\eta^m \right) \geq \frac{(2N)^{-3(N-1)}}{1-\eta}.
$$
From the  classical Kac's Lemma (see \cite{kac}), 
we conclude that
$$
\tilde{\mu}^{\beta}(u) \leq C'e^{-\beta(N-1)},
$$
where  $C'=(N+1)(2N)^{3(N-1)}$.
\end{proof}

\begin{corollary} \label{zerodecay}
$\tilde{\mu}^{\beta}(\vec{0})<C'_1e^{-N\beta}$ for all %$\beta\geq 1$ 
$\beta\geq 0$, where $C'_1=C'_1(N)>0$. %$C'_1=(N+1)(2N)^{3(N-1)+1}$.
\end{corollary}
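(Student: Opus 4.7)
The plan is to extract one extra factor of $e^{-\beta}$ beyond what Proposition \ref{decay} gives, by exploiting the fact that the null list $\vec{0}$ can only be entered from a very restricted set of predecessors, each of which has a small transition rate to $\vec{0}$. The starting point is the invariance identity for the embedded chain
\[
\tilde{\mu}^{\beta}(\vec{0})=\sum_{v \in \st}\tilde{\mu}^{\beta}(v)\,\tilde{p}(v,\vec{0}),
\]
where $\tilde{p}(v,\vec{0})$ denotes the one-step transition probability of $(\tilde{U}_n^{\beta,u})_{n\ge 0}$.

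First, I would classify the possible predecessors of $\vec{0}$. By the definition of $\pi^{a,\mathfrak{o}}$, one has $\pi^{a,\mathfrak{o}}(v)=\vec{0}$ if and only if $v(a)=0$ and $v(b)=-\mathfrak{o}$ for every $b\neq a$. Hence, for each pair $(a,\mathfrak{o})\in\A\times\opn$ there is exactly one predecessor $v_{a}^{\mathfrak{o}}$ with $v_{a}^{\mathfrak{o}}(a)=0$ and $v_{a}^{\mathfrak{o}}(b)=-\mathfrak{o}$ for $b\neq a$, giving at most $2N$ states that can reach $\vec{0}$ in one step.

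Next, I would compute the jump probability. From $v_{a}^{\mathfrak{o}}$ the total rate is
\[
q(v_{a}^{\mathfrak{o}})=2+(N-1)\bigl(e^{\beta}+e^{-\beta}\bigr),
\]
while the rate of the transition $v_{a}^{\mathfrak{o}}\to\vec{0}$ (actor $a$ expressing opinion $\mathfrak{o}$) equals $1$. Therefore
\[
\tilde{p}(v_{a}^{\mathfrak{o}},\vec{0})=\frac{1}{q(v_{a}^{\mathfrak{o}})}\;\le\;\frac{1}{(N-1)\,e^{\beta}}.
\]

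Finally, I would apply Proposition \ref{decay} to bound $\tilde{\mu}^{\beta}(v_{a}^{\mathfrak{o}})$. Since $N\ge 3$, each $v_{a}^{\mathfrak{o}}$ has entries in $\{-1,0,+1\}$ and is therefore not a ladder list, and $\max\{|v_{a}^{\mathfrak{o}}(b)|:b\in\A\}=1<N$. Proposition \ref{decay} then yields $\tilde{\mu}^{\beta}(v_{a}^{\mathfrak{o}})\le C'e^{-\beta(N-1)}$. Plugging these bounds into the invariance identity gives
\[
\tilde{\mu}^{\beta}(\vec{0})\;\le\;2N\cdot C'e^{-\beta(N-1)}\cdot\frac{1}{(N-1)\,e^{\beta}}\;=\;C'_1\,e^{-N\beta},
\]
with $C'_1=\frac{2NC'}{N-1}$, which is the claimed estimate. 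There is no serious obstacle here: the argument is essentially a one-step first-step decomposition, and the only thing to verify carefully is that the predecessors $v_{a}^{\mathfrak{o}}$ indeed fall in the range of applicability of Proposition \ref{decay}, which is immediate for $N\ge 3$.
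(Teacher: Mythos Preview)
Your proposal is correct and follows essentially the same approach as the paper: both use the invariance identity for the embedded chain, identify the $2N$ predecessor lists $v_a^{\mathfrak{o}}$ of $\vec{0}$, bound their stationary mass via Proposition~\ref{decay}, and bound the one-step transition probability to $\vec{0}$ by $O(e^{-\beta})$. The only cosmetic difference is that the paper bounds $\tilde{p}(v_a^{\mathfrak{o}},\vec{0})=\frac{1}{2+(N-1)(e^{\beta}+e^{-\beta})}$ by $e^{-\beta}$ rather than by $\frac{1}{(N-1)e^{\beta}}$, yielding a slightly different constant $C_1'$.
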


\begin{proof}
Let $v \in \st$ be the list in which $v(1)=0$ and $v(a)=1, \text{ for } a \in \A \setminus\{1\}$. We have that
$$
\{u \in \st:%p(\vec{0}|u)>0
\P(\tilde{U}^{\beta,u}_1=\vec{0})>0\}=\{\sigma(v),-\sigma(v) \in \st \text{ such that } \sigma: \A \to \A \text{ is a bijective map} \}.
$$
Therefore, by the symmetric properties of the process, we have that
$$
\tilde{\mu}^{\beta}(\vec{0})=\sum_{u \in \st}\P(\tilde{U}^{\beta,u}_1=\vec{0})\tilde{\mu}^{\beta}(u)=2N\P(\tilde{U}^{\beta,v}_1=\vec{0})\tilde{\mu}^{\beta}(v).
$$
Since
$$
\P(\tilde{U}^{\beta,v}_1=\vec{0})=\frac{1}{2+(N-1)(e^{+\beta}+e^{-\beta})} \leq e^{-\beta},
$$
from Proposition \ref{decay} we conclude that
$$
\tilde{\mu}^{\beta}(\vec{0})\leq (N+1)(2N)^{3(N-1)}e^{-\beta(N-1)}(2N)e^{-\beta}=C'_1e^{-\beta N}.
$$
\end{proof}

Now we can prove the Part 1 of Theorem \ref{fastconsensus}. 

\begin{proof}
%The second part of the proposition follows directly from the first part and Lemma \ref{nulimit}. Let $q^{\beta}(u)$ be the exit rate of the list $u \in \st$.
To prove  Part 1 of Theorem \ref{fastconsensus}, first note that for any $u \in \st$, the invariant measure $\mu^{\beta}$ satisfies
$$
\mu^{\beta}(u)=\frac{\tilde{\mu}^{\beta}(u)}{q_{\beta}(u)}\left(\sum_{u' \in \st}\frac{\tilde{\mu}^{\beta}(u')}{q_{\beta}(u')}\right)^{-1},
$$
where for any $v \in \st$,
$$
q_{\beta}(v) \mydef \sum_{a \in \A}(e^{\beta v(a)}+e^{-\beta v(a)})
$$
is the jump rate of $(U_t^{\beta,u})_{t\in[0,+\infty)}$ at list $v$. 
Note that for any $l\in \ladder,$ 
$$
q_{\beta}(l)=\sum_{j=0}^{N-1}(e^{\beta j}+e^{-\beta j}).
$$
Therefore,
$$
\mu^{\beta}(\ladder)=\sum_{u \in \ladder}\frac{\tilde{\mu}^{\beta}(u)}{q_{\beta}(u)}\left(\sum_{u' \in \st}\frac{\tilde{\mu}^{\beta}(u')}{q_{\beta}(u')}\right)^{-1}=
\frac{1}{1+ \displaystyle\frac{%\displaystyle
\sum_{j=0}^{N-1}(e^{\beta j}+e^{-\beta j})}{\tilde{\mu}^{\beta}(\ladder)}\sum_{u' \not\in \ladder}\frac{\tilde{\mu}^{\beta}(u')}{q_{\beta}(u')}}.
$$
By Proposition \ref{teo:m1m}, we have that
$$
\tilde{\mu}^{\beta}(\ladder)=\sum_{u \in \st} \tilde{\mu}^{\beta}(u)\P(\tilde{U}_{3(N-1)}^{\beta,u} \in \ladder) \geq (2N)^{-3(N-1)}.
$$
We also have that
$$
\sum_{j=0}^{N-1}(e^{\beta j}+e^{-\beta j}) \leq 2Ne^{\beta(N-1)},
$$
and then,
$$
\mu^{\beta}(\ladder) \geq  \frac{1}{1+ \displaystyle
(2N)^{3(N-1)+1}e^{\beta(N-1)}\sum_{u' \not\in \ladder}\frac{\tilde{\mu}^{\beta}(u')}{q_{\beta}(u')}}.
$$

For any $u \in \st$ such that $\max\{|u(a)|: a \in \A\} \geq N$, we have that $q_{\beta}(u) \geq e^{\beta N}$, and then
$$
\frac{\tilde{\mu}^{\beta}(u)}{q_{\beta}(u)} \leq \tilde{\mu}^{\beta}(u)e^{-\beta N}.
$$
Observe that for any $u \neq \vec{0}$, $q_{\beta}(u) \geq e^{\beta}$. Therefore,
by Proposition \ref{decay} and Corollary \ref{zerodecay}, it follows that for any $u \not\in \ladder$ such that $\max\{|u(a)|: a \in \A\} < N$, we have that
$$
\frac{\tilde{\mu}^{\beta}(u)}{q_{\beta}(u)} \leq C'_1e^{-\beta N}.
$$
Moreover, $|\{u \not\in \ladder : \max\{|u(a)|: a \in \A\} < N\}|\leq N(2N-1)^{N-1}$.
Putting all this together, we have that
$$
\sum_{u' \not\in \ladder}\frac{\tilde{\mu}^{\beta}(u')}{q_{\beta}(u')} \leq e^{-\beta N} +  N(2N-1)^{N-1}C'_1e^{-\beta N}.
$$
We conclude that
$$
\mu^{\beta}(\ladder) \geq \frac{1}{1+Ce^{-\beta}} 
\geq 1-Ce^{-\beta},
$$
where
$$
C=(2N)^{3(N-1)+1}[1+C'_1N(2N-1)^{N-1}].
$$

\end{proof}

To prove Part 2 of Theorem \ref{fastconsensus} we use the fact that the embedded process quickly reaches the set of ladder lists and the exit rate of any non-null list is bigger than $e^{\beta}$.

\begin{proof} To prove Part 2 of Theorem \ref{fastconsensus}, we first note that for any $u \in \st \setminus \{\vec{0}\}$, we have that
\begin{equation} \label{eqteo2p1}
\P\Big(R^{\beta,u}(\ladder)>t\Big)\leq
\P\left(R^{\beta,u}(\ladder)>t\ , \ \bigcap_{j=1}^{3(N-1)}M_j^u\right)+\P\left(\bigcup_{j=1}^{3(N-1)}(M_j^u)^c\right).
\end{equation}
Part $2$ of Proposition \ref{teo:m1m} implies that the right-hand side of Equation  \eqref{eqteo2p1} is bounded above by
$$
\P\left(T_{3(N-1)}>t\ , \ \bigcap_{j=1}^{3(N-1)}M_j^u\right)+\P\left(\bigcup_{j=1}^{3(N-1)}(M_j^u)^c\right).
$$
By Part $1$ of Proposition \ref{teo:m1m}, we have that
\begin{equation}\label{eqteo21}
\P\left(\bigcup_{j=1}^{3(N-1)}(M_j^u)^c\right) \leq 1-\left(\frac{e^{\beta}}{e^{\beta}+e^{-\beta}+2(N-1)}\right)^{3(N-1)}.
\end{equation}
Note that the exit rate of any list different from the null list is always bigger than $e^{\beta}$.  Note also that for any $u \neq \vec{0}$, the event $M_1^{u}$ implies that $U_{T_1}^{\beta,u} \neq \vec{0}$.
Therefore,
\begin{equation}\label{eqteo22}
\P\left(T_{3(N-1)}>t\ , \ \bigcap_{j=1}^{3(N-1)}M_j^u\right)\leq \P\left(\sum_{n=1}^{3(N-1)}E_n> t\right),
\end{equation}
where $(E_n)_{n\geq 1}$ is an i.i.d. sequence of exponentially distributed random variables with mean $1/e^{\beta}$. It follows that
$$
\P\left(\sum_{n=1}^{3(N-1)}E_n> t\right) \leq \P\left(\bigcup_{n=1}^{3(N-1)}\left\{E_n> \frac{t}{3(N-1)}\right\}\right) \leq 3(N-1)e^{-e^{\beta}t/(3(N-1))}.
$$
We conclude the proof by taking $t=e^{-\beta(1-\delta)}$ %, taking $\beta \to +\infty$ , 
and noting that the bounds in \eqref{eqteo21} and \eqref{eqteo22} do not depend on $u$.
\end{proof}

The next corollary presents an
equivalent result of Theorem \ref{fastconsensus} when the initial list is the null list.

\begin{corollary} \label{coro: fastconsensus}
For any fixed  $\delta>0$,
$$
\P\Big(R^{\beta,\vec{0}}(\ladder)>\tau+e^{-\beta(1-\delta)}\Big) \to 0 \text{ as } \beta \to \infty,
$$
where $\tau$ exponentially distributed random time with mean $1/2N$ independent from $(U_t^{\beta,\vec{0}})_{t \in [0,+\infty)}$.
\end{corollary}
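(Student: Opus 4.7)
The plan is to use the strong Markov property at the first jump time $T_1$ of the process started at $\vec{0}$, and then apply Part 2 of Theorem \ref{fastconsensus} to the post-$T_1$ trajectory.

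First I would identify $\tau$ as $T_1$. Since $U_0^{\beta,\vec{0}} = \vec{0}$, the total jump rate at the null list is
$$
q_\beta(\vec{0}) = \sum_{a \in \A}(e^{\beta \cdot 0} + e^{-\beta \cdot 0}) = 2N,
$$
so $T_1$ is exponentially distributed with mean $1/(2N)$, independent of the post-jump evolution, and can be taken as $\tau$. Moreover, $U_{T_1}^{\beta,\vec{0}}$ is almost surely a list belonging to $\st \setminus \{\vec{0}\}$ (specifically a permutation of $(\pm 1, 0, \ldots, 0)$).

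Next I would write the decomposition
$$
R^{\beta,\vec{0}}(\ladder) \leq T_1 + R',
$$
where $R' := \inf\{t > 0 : U_{T_1+t}^{\beta,\vec{0}} \in \ladder\}$. By the strong Markov property at $T_1$, conditionally on $U_{T_1}^{\beta,\vec{0}} = v$, the random variable $R'$ has the same distribution as $R^{\beta,v}(\ladder)$. Hence
\begin{align*}
\P\bigl(R^{\beta,\vec{0}}(\ladder) > \tau + e^{-\beta(1-\delta)}\bigr)
&\leq \P\bigl(R' > e^{-\beta(1-\delta)}\bigr) \\
&= \sum_{v \in \st \setminus \{\vec{0}\}} \P(U_{T_1}^{\beta,\vec{0}} = v)\, \P\bigl(R^{\beta,v}(\ladder) > e^{-\beta(1-\delta)}\bigr) \\
&\leq \sup_{v \in \st \setminus \{\vec{0}\}} \P\bigl(R^{\beta,v}(\ladder) > e^{-\beta(1-\delta)}\bigr).
\end{align*}

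Finally, I would invoke Part 2 of Theorem \ref{fastconsensus}, which states precisely that the supremum on the right-hand side tends to $0$ as $\beta \to +\infty$. This gives the desired convergence. There is no real obstacle here: the argument is essentially a restatement of Part 2 of Theorem \ref{fastconsensus} after conditioning on the first jump from the null list, and the only subtlety is checking that the first jump time from $\vec{0}$ has the claimed $\mathrm{Exp}(2N)$ distribution and is independent of the subsequent evolution, both of which follow from the definition of the Markov jump process.
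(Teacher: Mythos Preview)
Your proposal is correct and follows essentially the same approach as the paper: the paper's one-line proof simply says the corollary ``follows directly from Part 2 of Theorem \ref{fastconsensus} and the fact that given the initial list $\vec{0}$, the first jump time $T_1$ is an exponentially distributed random time with mean $1/2N$.'' You have spelled out the details of that sentence---computing $q_\beta(\vec{0})=2N$, applying the strong Markov property at $T_1$, and bounding by the supremum over $v\in\st\setminus\{\vec{0}\}$---exactly as intended.
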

\begin{proof}
Corollary \ref{coro: fastconsensus} follows directly from Part $2$ of Theorem \ref{fastconsensus} and the fact that given the initial list $\vec{0}$, the first jump time $T_1$ is an exponentially distributed random time with mean $1/2N$.
\end{proof}

\begin{remark} \label{remarkteo2}
Following the same steps of the proof of Part 2 of Theorem \ref{fastconsensus}, it follows that for any fixed $\delta>0$ %and $u \in \st \setminus\{\vec{0}\}$,
$$
\inf_{u \in \pcons}\P\Big(R^{\beta,u}(\pladder) < \min\{e^{-\beta(1-\delta)}, R^{\beta,u}(\ncons)\}\Big) \to 1 \text{, as } \beta \to +\infty.
$$
\end{remark}

\section{Proof of Theorem \ref{metastable}}\label{cap:metastability}

Recall that in Section \ref{sec:auxiliary}, for any $l \in \pladder$ and for any $\beta \geq 0$ we considered $c_{\beta}$ as the positive real number such that
\begin{equation*}%\label{cbeta2}
\P(%T^{N,\beta,v}(\nladder)
R^{\beta,l}(\nladder)>c_{\beta})=e^{-1}.
\end{equation*}

To prove Theorem \ref{metastable}, we prove in Proposition \ref{explim2} that for $l\in \pladder$,
%which is interesting by itself.
$$
\frac{R^{\beta,l}(\nladder)}{c_{\beta}}\to \text{Exp}(1), \text{ as } \beta \to +\infty,
$$
where $\text{Exp}(1)$ is a random variable exponentially distributed with mean $1$. We will prove that the limiting distribution satisfies the memoryless property, which characterizes the exponential distribution. For this, we use the fact that $c_{\beta} \to +\infty$ as $\beta \to +\infty$, which is the content of Proposition \ref{cbetabound}. Lemmas \ref{lemmasup} and \ref{prop7} give the necessary conditions to replace $c_{\beta}$ by $\mathbb{E}[R^{\beta,l}(\nladder)]$ in Proposition \ref{explim2}. Using the fact that the process starting in $\pcons$ will quickly reaches $\pladder$, as $\beta \to +\infty$, we finish the proof of Theorem \ref{metastable}.

\begin{proposition}\label{explim2} For any $l \in \pladder$
$$
\frac{R^{\beta,l}(\nladder)}{c_{\beta}}\to \text{Exp}(1), \text{ in distribution as } \beta \to +\infty,
$$
where $\text{Exp}(1)$ is a random variable exponentially distributed with mean $1$.
\end{proposition}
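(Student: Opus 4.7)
The plan is to show that $F_\beta(t) := \P(R^{\beta,l}(\nladder) > t c_\beta)$ satisfies an approximate multiplicative identity $F_\beta(t+s) \approx F_\beta(t) F_\beta(s)$, which together with $F_\beta(1) = e^{-1}$ (the definition of $c_\beta$) forces $F_\beta(t) \to e^{-t}$. This is the pathwise metastability argument, and it is driven entirely by the separation of timescales already established: excursions to $\ladder$ take time at most $\eta_\beta := e^{-\beta(1-\delta)}$ (Theorem \ref{fastconsensus}, Part 2), while $c_\beta \gtrsim e^\beta$ (Proposition \ref{cbetabound}), so $\eta_\beta/c_\beta \to 0$.

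First I would apply the strong Markov property at time $t c_\beta$ and, to regularize the post-$tc_\beta$ state, introduce the stopping time
$$
\sigma \;=\; t c_\beta + R^{\beta,U_{tc_\beta}}(\ladder),
$$
the first visit to $\ladder$ after $t c_\beta$. By Theorem \ref{fastconsensus} Part 2 we have $\P(\sigma - t c_\beta > \eta_\beta) \to 0$ uniformly in the (non-null) starting state; the contribution of $\{U_{tc_\beta} = \vec{0}\}$ is handled separately using that from $\vec 0$ the first jump lands in a non-null state on a timescale $1/(2N)$, plus Corollary \ref{zerodecay}. On the event $\{R^{\beta,l}(\nladder) > (t+s)c_\beta\}$ with $\sigma \leq t c_\beta + \eta_\beta$, one must have $U_\sigma \in \pladder$, since $U_\sigma \in \nladder$ would force $R^{\beta,l}(\nladder) = \sigma \leq t c_\beta + \eta_\beta < (t+s)c_\beta$ for large $\beta$. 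The symmetry of the dynamics under relabeling of actors gives $R^{\beta,l'}(\nladder) \stackrel{d}{=} R^{\beta,l}(\nladder)$ for every $l' \in \pladder$, so applying the strong Markov property at $\sigma$ and comparing the offset $\sigma - t c_\beta \in [0, \eta_\beta]$ yields
$$
F_\beta(t) F_\beta(s) - o(1) \;\le\; F_\beta(t+s) \;\le\; F_\beta\!\left(s - \tfrac{\eta_\beta}{c_\beta}\right) F_\beta(t) + o(1).
$$

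Next, by Helly's selection theorem, every subsequence of $(F_\beta)$ admits a further subsequence converging pointwise to a non-increasing limit $F:[0,\infty)\to[0,1]$ with $F(0)=1$ and $F(1) = e^{-1}$. Passing to the limit in the two-sided bounds above, together with monotonicity of $F$ and the fact that $\eta_\beta/c_\beta \to 0$, produces the Cauchy functional equation $F(t+s) = F(t) F(s)$ at every continuity point of $F$; monotonicity extends this identity everywhere, and combined with $F(1) = e^{-1}$ it forces $F(t) = e^{-t}$. Since every subsequential limit equals $e^{-t}$, the full family converges, proving the proposition.

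The main obstacle is the step showing that $U_\sigma \in \pladder$ with high probability on the event $\{R^{\beta,l}(\nladder) > (t+s)c_\beta\}$, i.e., that conditioning on not yet reaching $\nladder$ rules out excursions that would dive directly into $\nladder$. The clean way is to avoid computing this probability directly and instead to absorb it into the bound by the argument above: the ``bad'' scenario (hitting $\nladder$ before time $t c_\beta + \eta_\beta$) is incompatible with the event $\{R^{\beta,l}(\nladder) > (t+s)c_\beta\}$ itself, so it contributes $0$ to $F_\beta(t+s)$. The remaining uniform smallness of $\P(\sigma - tc_\beta > \eta_\beta)$ is exactly the content of Theorem \ref{fastconsensus} Part 2 supplemented by an \textit{ad hoc} bound at $\vec 0$.
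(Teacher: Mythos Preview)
Your approach is correct in spirit and follows the same overall pathwise-metastability template as the paper (approximate multiplicativity of the survival function plus $F_\beta(1)=e^{-1}$), but the implementation differs in two places worth noting.

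First, the paper does \emph{not} introduce your stopping time $\sigma$. It applies the simple Markov property at the deterministic time $sc_\beta$ and observes that, by the permutation symmetry of the dynamics, the term $|\P(R^{\beta,u}/c_\beta>t)-\P(R^{\beta,l}/c_\beta>t)|$ vanishes for every $u\in\pladder$; what remains is bounded by $\P(U^{\beta,l}_{sc_\beta}\notin\ladder)$. This last probability is then controlled, uniformly in $s$, by a short stationarity-and-symmetry argument (Lemma~\ref{propcoupling}): starting the process under $\mu^\beta$, $\P(U^{\beta,l}_s\notin\ladder)\le(1-\mu^\beta(\ladder))/\mu^\beta(\ladder)$, which tends to $0$ by Part~1 of Theorem~\ref{fastconsensus}. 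This sidesteps entirely the separation-of-timescales bookkeeping you set up with $\sigma$ and $\eta_\beta$, and in particular it requires no special handling of the state $\vec 0$. In your route the $\vec 0$ case is the one delicate point: with $\eta_\beta=e^{-\beta(1-\delta)}$ one has $\P(R^{\beta,\vec 0}(\ladder)>\eta_\beta)\to 1$, not $0$, so you must either let $\eta_\beta\to\infty$ slowly (any choice with $\eta_\beta/c_\beta\to 0$ works, e.g.\ $\eta_\beta=\beta$) and invoke Corollary~\ref{coro: fastconsensus}, or bound $\P(U_{tc_\beta}=\vec 0)$ directly---and the cleanest way to do the latter is exactly Lemma~\ref{propcoupling}. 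Corollary~\ref{zerodecay} alone, which concerns the embedded-chain invariant measure, does not give you $\P(U_{tc_\beta}=\vec 0)\to 0$ without further argument.

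Second, instead of Helly plus the Cauchy functional equation, the paper argues more elementarily: once $|F_\beta(s+t)-F_\beta(s)F_\beta(t)|\to 0$ is established, it iterates with $s=t=2^{-n}$ to get $F_\beta(2^{-n})\to e^{-2^{-n}}$, then builds all dyadic rationals, and finally extends to every $t>0$ by monotonicity of $t\mapsto F_\beta(t)$. Your Helly route is valid but requires the extra care you mention about continuity points of the subsequential limit; the paper's dyadic argument avoids this entirely.
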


To prove Proposition \ref{explim2}, we will first prove the following lemma.

\begin{lemma} \label{propcoupling}
For any $\beta\geq 0$, for any $l \in \ladder$ and for any $s>0$,
$$
\P(U_s^{\beta,l} \in \st \setminus\ladder) \leq\frac{1-\mu^{\beta}(\ladder)}{\mu^{\beta}(\ladder)}.
$$
\end{lemma}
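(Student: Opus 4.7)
The plan is to exploit stationarity of $\mu^{\beta}$ together with the symmetries of the dynamics. The basic idea is that if we start the process from $\mu^{\beta}$, its distribution at time $s$ is still $\mu^{\beta}$, so the probability of being outside $\ladder$ at time $s$ equals $1-\mu^{\beta}(\ladder)$. If we further show that, for every $l' \in \ladder$, $\P(U_s^{\beta,l'} \in \st \setminus \ladder)$ takes the same value as $\P(U_s^{\beta,l} \in \st \setminus \ladder)$, then restricting the average over the initial distribution to ladder states gives the bound immediately.

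Concretely, first I would decompose
\begin{equation*}
1-\mu^{\beta}(\ladder)=\mu^{\beta}(\st\setminus\ladder)=\sum_{u \in \st}\mu^{\beta}(u)\,\P(U_s^{\beta,u}\in\st\setminus\ladder)\geq \sum_{l' \in \ladder}\mu^{\beta}(l')\,\P(U_s^{\beta,l'}\in\st\setminus\ladder).
\end{equation*}
Then I would argue the symmetry identity
\begin{equation*}
\P(U_s^{\beta,l'}\in\st\setminus\ladder)=\P(U_s^{\beta,l}\in\st\setminus\ladder) \quad\text{for every } l'\in\ladder.
\end{equation*}
For this one notes that the generator \eqref{generator} is invariant under relabelling actors $u\mapsto\sigma(u)$ and under the sign-flip $u\mapsto -u$, and that the set $\ladder=\pladder\cup\nladder$ is invariant under both operations. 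Since any two ladder lists can be obtained from one another by composing a permutation and possibly a sign-flip, the law of $U_s^{\beta,l'}$ is the pushforward of the law of $U_s^{\beta,l}$ under such a map, and ladder status is preserved.

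Combining the two displays yields
\begin{equation*}
1-\mu^{\beta}(\ladder)\geq \mu^{\beta}(\ladder)\,\P(U_s^{\beta,l}\in\st\setminus\ladder),
\end{equation*}
which is the claimed inequality after dividing by $\mu^{\beta}(\ladder)$, positive thanks to Part 1 of Theorem \ref{fastconsensus}. There is no real obstacle here: the only thing one has to be slightly careful about is spelling out the symmetry argument, in particular writing a bijection between $\pladder$ and $\nladder$ obtained by composing a permutation with the sign-flip, and observing that the rates in the generator depend on $u$ only through the multiset $\{\mathfrak{o}u(a): a\in\A,\mathfrak{o}\in\opn\}$, so they are preserved under these symmetries.
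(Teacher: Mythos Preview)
Your proof is correct and follows essentially the same approach as the paper: the paper also uses stationarity of $\mu^{\beta}$ to write $\mu^{\beta}(\ladder)=\sum_{u}\mu^{\beta}(u)\P(U_s^{\beta,u}\in\ladder)$, invokes the symmetry $\P(U_s^{\beta,l}\in\ladder)=\P(U_s^{\beta,l'}\in\ladder)$ for $l,l'\in\ladder$, bounds the sum over $u\notin\ladder$ by $1-\mu^{\beta}(\ladder)$, and rearranges. The only cosmetic difference is that you decompose $\mu^{\beta}(\st\setminus\ladder)$ rather than $\mu^{\beta}(\ladder)$; and a small remark: for the positivity of $\mu^{\beta}(\ladder)$ at \emph{all} $\beta\geq 0$ you should appeal to the ergodicity established in Theorem~\ref{existence} rather than to Theorem~\ref{fastconsensus}, whose bound $1-Ce^{-\beta}$ is only informative for large $\beta$.
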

\begin{proof} For any $s>0$, by definition,
$$
\mu^{\beta}(\ladder)=\sum_{u \in \ladder}\mu^{\beta}(u)\P(U^{\beta,u}_s \in \ladder)+\sum_{u \in \st \setminus \ladder}\mu^{\beta}(u)\P(U^{\beta,u}_s \in \ladder).
$$
By the symmetric properties of the process, it follows that for any $l,l' \in \ladder$,
$$
\P(U^{\beta,l}_s \in \ladder)=\P(U^{\beta,l'}_s \in \ladder).
$$
Moreover,
$$
\sum_{u \in \st \setminus \ladder}\mu^{\beta}(u)\P(U^{\beta,u}_s \in \ladder) \leq 1-\mu^{\beta}(\ladder).
$$
Putting all this together we have that
$$
\mu^{\beta}(\ladder) \leq \mu^{\beta}(\ladder)\P(U^{\beta,l}_s \in \ladder)+(1-\mu^{\beta}(\ladder)).
$$
We conclude the proof of Lemma \ref{propcoupling} by rearranging the terms of this last inequality.

\end{proof}

\begin{proof} We will now prove Proposition \ref{explim2}.
First of all, we will prove that for any $l \in \pladder$ and for any pair of positive real numbers $s,t\geq 0$, the following holds
\begin{equation}  \label{explim1}
 \lim_{\beta \to +\infty}\left|\P\left(\frac{R^{\beta,l}(\nladder)}{c_{\beta}}>s+t \right)-\P\left(\frac{R^{\beta,l}(\nladder)}{c_{\beta}}>s \right)\P\left(\frac{R^{\beta,l}(\nladder)}{c_{\beta}}>t \right) \right|=0.   
\end{equation}

To simplify the presentation of the proof,  we will use the shorthand notation $R^{\beta,u}$ instead of $R^{\beta,u}(\nladder)$, for any $u \notin \nladder$.

Indeed, for a fixed $l \in \pladder$, by the Markov property and the triangle inequality, 
$$
\left|\P\left(\frac{R^{\beta,l}}{c_{\beta}}>s+t \right)-\P\left(\frac{R^{\beta,l}}{c_{\beta}}>s \right)\P\left(\frac{R^{\beta,l}}{c_{\beta}}>t \right) \right|=
$$
$$
\left|\sum_{u \in \st\setminus\nladder}\P\left(U_{c_{\beta}s }^{\beta,l}=u,\frac{R^{\beta,l}}{c_{\beta}}>s+t \right)-\P\left(U_{c_{\beta}s }^{\beta,l}=u,\frac{R^{\beta,l}}{c_{\beta}}>s \right)\P\left(\frac{R^{\beta,l}}{c_{\beta}}>t \right) \right|\leq
$$
\begin{equation} \label{eqmeta1}
\sum_{u \in \st\setminus\nladder}\P\left(U_{c_{\beta}s }^{\beta,l}=u,\frac{R^{\beta,l}}{c_{\beta}}>s\right)\left|\P\left(\frac{R^{\beta,u}}{c_{\beta}}>t \right)-\P\left(\frac{R^{\beta,l}}{c_{\beta}}>t \right)\right|. 
\end{equation}

By the symmetric properties of the process, for any $u \in \pladder$,
$$
\P\left(\frac{R^{\beta,u}}{c_{\beta}}>t \right)=\P\left(\frac{R^{\beta,l}}{c_{\beta}}>t \right).
$$
Therefore, 
the right-hand  side of Equation \eqref{eqmeta1} is bounded above by
\begin{equation} \label{eqpropexp2}
\sum_{u \in \st\setminus\ladder}\P\left(U_{c_{\beta}s }^{\beta,l}=u,\frac{R^{\beta,l}}{c_{\beta}}>s\right)\left|\P\left(\frac{R^{\beta,u}}{c_{\beta}}>t \right)-\P\left(\frac{R^{\beta,l}}{c_{\beta}}>t \right)\right|\leq \P\left(U_{c_{\beta}s }^{\beta,l}\in \st\setminus\ladder\right). %\leq 2\mu^{\beta}(\st\setminus\ladder)
\end{equation}
% \leq 2\mu^{\beta}(\st\setminus\ladder).Limit \eqref{explim1} follows
By Lemma \ref{propcoupling}, Equation \eqref{eqpropexp2} and Theorem \ref{fastconsensus} implies \eqref{explim1}.
%where  $(\tilde{U}^{N,\beta}_t)_{t \geq 0}$ is the process  $(U^{N,\beta}_t)_{t \geq 0}$  restricted to $\st \setminus \nzone$. In the restricted process, the rate of any jump from a list in $\pzone\cup \rzone$ to a list in $\nzone$ is replaced by the value $0$.

%Let $\tilde{\mu}_{\beta}$ be the invariant probability measure of the process $(\tilde{U}^{N,\beta}_t)_{t \geq 0}$ and $(\tilde{U}^{N,\beta,\tilde{\mu}_{\beta}}_t)_{t \geq 0}$ be the process $(\tilde{U}^{N,\beta}_t)_{t \geq 0}$ with $U^{N,\beta,\mu^{\beta}}_0 \sim \mu^{\beta}$. 

By definition,
$$
\P\left(\frac{R^{\beta,l}(\nladder)}{c_{\beta}} > 1\right)=e^{-1}.
$$
Iterating \eqref{explim1} with $t=s=2^{-n}$, for $n=1,2,\ldots$, we have that
\begin{equation*} %\label{iteration}
\P\left(\frac{R^{\beta,l}(\nladder)}{c_{\beta}} > 2^{-n}\right) \to e^{-2^{-n}}, \text{ as } \beta \to +\infty.
\end{equation*}
More generally,  this iteration implies that for any
$$
t \in \left\{\sum_{n=1}^{m}b(n)2^{-n}: b(n) \in \{0,1\}, n=1,...,m, m \geq 1\right\}
$$
is valid that
\begin{equation}
\P\left(\frac{R^{\beta,l}(\nladder)}{c_{\beta}} > t\right) \to e^{-t}, \text{ as } \beta \to +\infty.
\label{convergence}
\end{equation}
Any real number $r \in (0,1)$ has a binary representation
$$
r= \sum_{n=1}^{+\infty}b(n)2^{-n},
$$
where for any $n\geq 1$, $b(n) \in \{0,1\}$. Therefore, 
the monotonicity of 
$$
t \to \P\left(\frac{R^{\beta,l}(\nladder)}{c_{\beta}} > t\right)
$$
implies that the convergence in \eqref{convergence} is valid for any $t \in (0,1)$. Moreover, for any positive integer $n\geq 1$,
Equation \eqref{explim1} implies that
\begin{equation*}
    \P\left(\frac{R^{\beta,l}(\nladder)}{c_{\beta}} > n\right) \to e^{-n}, \text{ as } \beta \to +\infty.
\end{equation*}
Putting all this together, we conclude that \eqref{convergence} is valid for any $t >0$.
\end{proof}

\begin{remark} \label{remark1}For any $l \in \pladder$ and for any $\beta \geq 0$, the function $f_{\beta}:[0,+\infty) \to [0,1]$ given by
$$
f_{\beta}(t)=\P\left(\frac{R^{\beta,l}(\nladder)}{c_{\beta}} > t \right)
$$is monotonic. Also, by Proposition \ref{explim2}, it converges pointwise as $\beta \to +\infty$ to a continuous function. Therefore, given $\epsilon_{\beta}>0$ such that $\displaystyle\lim_{\beta \to +\infty}\epsilon_{\beta}=0$, for any $t>0$ we have that
$$
\lim_{\beta \to +\infty}\P\left(\frac{R^{\beta,l}(\nladder)}{c_{\beta}} >t+\epsilon_{\beta} \right)=\lim_{\beta \to +\infty}\P\left(\frac{R^{\beta,l}(\nladder)}{c_{\beta}} > t-\epsilon_{\beta}\right)=e^{-t}.
$$

\end{remark}

To prove Theorem \ref{metastable}, we will first prove the two following lemmas.

\begin{lemma} \label{lemmasup}
For any $u \in \st$, let
$$
\rho_u \mydef \P\left(R^{\beta,u}(\pladder)<R^{\beta,u}(\nladder) \Big| \bigcap_{j=1}^{3(N-1)}M_j^u\right).
$$
Then, for any $t>0$,
$$
\lim_{\beta \to +\infty}\sup_{u \in \st \setminus\nladder}\left|\P\left(\frac{R^{\beta,u}(\nladder)}{c_{\beta}} > t\right)-e^{-t}\rho_u\right|=0.
$$
\end{lemma}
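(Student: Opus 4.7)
The plan is to condition on the event $G := \bigcap_{j=1}^{3(N-1)}M_j$, on which by Part~2 of Proposition~\ref{teo:m1m} the chain has entered $\ladder = \pladder \cup \nladder$ by step $3(N-1)$; then to split according to which side of the ladder is hit first, apply the strong Markov property at that hitting time, and invoke Proposition~\ref{explim2} together with Remark~\ref{remark1}.

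Writing $R^+ = R^{\beta,u}(\pladder)$ and $R^- = R^{\beta,u}(\nladder)$, I would decompose
\begin{equation*}
\P\!\Big(\tfrac{R^-}{c_\beta} > t\Big) = I^+(u) + I^-(u) + \P\!\Big(\tfrac{R^-}{c_\beta} > t,\, G^c\Big), \qquad I^\pm(u) := \P\!\Big(\tfrac{R^-}{c_\beta} > t,\, R^\pm < R^\mp,\, G\Big).
\end{equation*}
By Part~1 of Proposition~\ref{teo:m1m}, the last term is bounded by $1 - \zeta_\beta^{3(N-1)}$, which tends to $0$ independently of $u$.

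For $I^-(u)$: on $\{R^- < R^+\} \cap G$, Part~2 of Proposition~\ref{teo:m1m} forces $R^- \leq T_{3(N-1)}$. Since $q_\beta(v) \geq 2N$ for every $v \in \st$, $T_{3(N-1)}$ is stochastically dominated, uniformly in $u$, by a sum of $3(N-1)$ independent $\text{Exp}(2N)$ random variables. Combined with $c_\beta \geq C_1 e^\beta \to +\infty$ from Proposition~\ref{cbetabound}, this gives $I^-(u) \leq \P(T_{3(N-1)} > c_\beta t) \to 0$ uniformly in $u$.

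For $I^+(u)$: applying the strong Markov property at $R^+$, conditionally on $U^{\beta,u}_{R^+} = l \in \pladder$ the residual time $R^- - R^+$ has the law of $R^{\beta,l}(\nladder)$. By the symmetry of the dynamics (already used to define $c_\beta$) the survival function $f_\beta(s) := \P(R^{\beta,l}(\nladder)/c_\beta > s)$ does not depend on the particular $l \in \pladder$. Consequently
\begin{equation*}
I^+(u) = \mathbb{E}\!\Big[\mathbf{1}_{\{R^+ < R^-\}\cap G}\, f_\beta\!\big(t - \tfrac{R^+}{c_\beta}\big)\Big].
\end{equation*}
Setting $\varepsilon_\beta := T_{3(N-1)}/c_\beta$ (so that $\P(\varepsilon_\beta > \delta) \to 0$ uniformly in $u$ for each $\delta > 0$, by the previous paragraph), on $\{R^+ < R^-\} \cap G$ we have $0 \leq R^+/c_\beta \leq \varepsilon_\beta$, and the monotonicity of $f_\beta$ yields
\begin{equation*}
f_\beta(t+\varepsilon_\beta) \leq f_\beta\!\big(t - \tfrac{R^+}{c_\beta}\big) \leq f_\beta(t-\varepsilon_\beta) \quad \text{on } \{R^+ < R^-\}\cap G.
\end{equation*}
By Remark~\ref{remark1} both bounds converge in probability to $e^{-t}$; therefore
\begin{equation*}
I^+(u) = e^{-t}\,\P(\{R^+ < R^-\} \cap G) + o(1) = e^{-t}\,\rho_u\, \P(G) + o(1) = e^{-t}\,\rho_u + o(1),
\end{equation*}
uniformly in $u$.

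The only delicate point is to make every $o(1)$ uniform in $u \in \st \setminus \nladder$. This works because each lower bound or stochastic domination involved ($\zeta_\beta^{3(N-1)}$, $q_\beta(v) \geq 2N$, $c_\beta \geq C_1 e^\beta$, monotonicity of $f_\beta$) is independent of $u$; only the weight $\rho_u$ depends on $u$, and it enters the claimed limit linearly.
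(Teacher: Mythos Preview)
Your decomposition and overall strategy coincide with the paper's: condition on $G=\bigcap_{j=1}^{3(N-1)}M_j$, split according to which component of $\ladder$ is reached first, show the $\nladder$-first and $G^c$ contributions vanish uniformly, and for the $\pladder$-first piece use the strong Markov property together with Proposition~\ref{explim2}/Remark~\ref{remark1}. The only difference of substance is cosmetic: the paper packages the ``reach $\ladder$ quickly'' statement via the deterministic threshold $e^{\beta/2}$ (through the event $E_{\beta,u}=\{R^{\beta,u}(\ladder)<e^{\beta/2}\}\cap G$ and Theorem~\ref{fastconsensus}), while you bound $T_{3(N-1)}$ directly via $q_\beta\geq 2N$. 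Both routes give a uniform $o(1)$.

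There is, however, one genuine gap in your write-up. You assert
\[
I^+(u)=\mathbb{E}\big[\mathbf{1}_{\{R^+<R^-\}\cap G}\,f_\beta\big(t-\tfrac{R^+}{c_\beta}\big)\big]
\]
by strong Markov at $R^+$, but $G$ is \emph{not} $\mathcal{F}_{R^+}$-measurable when $R^+<T_{3(N-1)}$, so the indicator cannot be pulled outside the conditional expectation. The fix is short: observe that on $G$, once the chain enters $\pladder$ it stays there for all remaining steps up to $3(N-1)$ (a one-line check from the definition of $M_j$), so on $G\cap\{R^+<R^-\}$ one has $U_{T_{3(N-1)}}\in\pladder$ and $R^->T_{3(N-1)}$; in fact $G\cap\{R^+<R^-\}=G\cap\{R^->T_{3(N-1)}\}$, which \emph{is} $\mathcal{F}_{T_{3(N-1)}}$-measurable. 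Applying the strong Markov property at $T_{3(N-1)}$ instead of $R^+$ then yields the correct identity
\[
I^+(u)=\mathbb{E}\big[\mathbf{1}_{\{R^+<R^-\}\cap G}\,f_\beta\big(t-\tfrac{T_{3(N-1)}}{c_\beta}\big)\big],
\]
and your bounds on $\varepsilon_\beta=T_{3(N-1)}/c_\beta$ carry the rest through unchanged. (The paper's own proof is equally informal at this step; the same correction applies there.)
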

\begin{proof}
Denoting $E_{\beta,u}=\{R^{\beta,u}(\ladder)<e^{\beta/2},\bigcap_{j=1}^{3(N-1)}M_j^u\}$, we have that for any $u \not\in \ladder$,
\begin{equation}\label{eqsup}
\P\left(\frac{R^{\beta,u}(\nladder)}{c_{\beta}} > t\right) = \P\left(\frac{R^{\beta,u}(\nladder)}{c_{\beta}} > t,E_{\beta,u}, R^{\beta,u}(\pladder)<R^{\beta,u}(\nladder)\right)+
\end{equation}
$$
\P\left(\frac{R^{\beta,u}(\nladder)}{c_{\beta}} > t,E_{\beta,u}, R^{\beta,u}(\nladder)<R^{\beta,u}(\pladder)\right)+\P\left(\frac{R^{\beta,u}(\nladder)}{c_{\beta}} > t,E_{\beta,u}^c\right).
$$
By Proposition \ref{cbetabound}, there exists $\beta_t>0$ such that for any $\beta > \beta_t$, $c_{\beta}t> e^{\beta/2}$. By the definition of $E_{\beta,u}$, this implies that, for any $\beta > \beta_t$,
$$
\P\left(\frac{R^{\beta,u}(\nladder)}{c_{\beta}} > t,E_{\beta,u}, R^{\beta,u}(\nladder)< R^{\beta,u}(\pladder)\right)\leq \P(c_{\beta}t <R^{\beta,u}(\nladder)<e^{\beta/2}) = 0.
$$

Note that the first term on the right-hand side of Equation \eqref{eqsup} is equal
$$
\P\left(\frac{R^{\beta,u}(\nladder)}{c_{\beta}} > t\ | \ E_{\beta,u}, R^{\beta,u}(\pladder)<R^{\beta,u}(\nladder)\right) \times \rho_u \times \P(E_{\beta,u}).
$$
Considering $l \in \pladder$, by the Markov property it follows that the term above is bounded below by
\begin{equation}\label{eqlb}
\P\left(\frac{R^{\beta,l}(\nladder)}{c_{\beta}} > t \right)\rho_u[1-\P(E_{\beta,u}^c)] % \to e^{-t}, \text{ as } \beta \to +\infty,
\end{equation}
and bounded above by
$$
\P\left(\frac{R^{\beta,l}(\nladder)}{c_{\beta}} > t-\frac{e^{\beta/2}}{c_{\beta}} \right)\rho_u.
$$

Therefore, considering $l \in \pladder$, for any $u \in \st \setminus\nladder$ and for all $\beta>\beta_t$, the left-hand side of Equation \eqref{eqsup} is bounded below by \eqref{eqlb}
and bounded above by
\begin{equation}\label{equb}
\P\left(\frac{R^{\beta,l}(\nladder)}{c_{\beta}} > t-\frac{e^{\beta/2}}{c_{\beta}} \right)\rho_u+\P(E_{\beta,u}^c). %  \to e^{-t}, \text{ as } \beta \to +\infty,
\end{equation}
By Theorem \ref{fastconsensus} and Corollary \ref{coro: fastconsensus},
$$
\lim_{\beta \to +\infty}\sup_{u \in \st \setminus \nladder}\P\left(E_{\beta,u}^c\right)=0.
$$
By Proposition $\ref{cbetabound}$, it follows that $\displaystyle\lim_{\beta \to +\infty}e^{\beta/2}/c_{\beta}=0$. Therefore, by Remark \ref{remark1} we have that
$$
\lim_{\beta \to +\infty}\P\left(\frac{R^{\beta,l}(\nladder)}{c_{\beta}} > t \right)=%\lim_{\beta \to +\infty}\P\left(\frac{R^{\beta,l}(\nladder)}{c_{\beta}} > t-\frac{e^{\beta/2}}{c_{\beta}} \right)=
\lim_{\beta \to +\infty}\P\left(\frac{R^{\beta,l}(\nladder)}{c_{\beta}} > t-\frac{e^{\beta/2}}{c_{\beta}} \right)%\P\left(\frac{R^{\beta,l}(\nladder)}{c_{\beta}} > \frac{e^{\beta/2}}{c_{\beta}} \right)
=e^{-t}.
$$
We conclude the proof by noting that the limits in the last equation do not depend on $u$.

\end{proof}

\begin{lemma} \label{prop7}
There exists $\gamma\in (0,1)$ and $\beta_{\gamma}>0$ such that for any $\beta > \beta_{\gamma}$ and any $l \in \ladder^+$, the following upperbound holds
$$
\P\left(\frac{R^{\beta,l}(\nladder)}{c_{\beta}} > n\right) \leq \gamma^n,
$$
 for any positive integer $n\geq 1$.
\end{lemma}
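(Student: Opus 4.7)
The plan is to bootstrap the one-step estimate provided by Lemma \ref{lemmasup} into a geometric decay via the Markov property. The key observation is that Lemma \ref{lemmasup} (taken at $t=1$) gives
$$
\lim_{\beta\to+\infty}\sup_{u\in\st\setminus\nladder}\left|\P\!\left(\frac{R^{\beta,u}(\nladder)}{c_{\beta}}>1\right)-e^{-1}\rho_u\right|=0,
$$
and since $\rho_u\in[0,1]$ for every $u$, this supplies an asymptotic \emph{uniform} upper bound
$$
\limsup_{\beta\to+\infty}\ \sup_{u\in\st\setminus\nladder}\P\!\left(\frac{R^{\beta,u}(\nladder)}{c_{\beta}}>1\right)\leq e^{-1}.
$$

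I would first fix any $\alpha\in(e^{-1},1)$ and use the display above to pick $\beta_\alpha>0$ so that, whenever $\beta>\beta_\alpha$,
$$
\sup_{u\in\st\setminus\nladder}\P\!\left(R^{\beta,u}(\nladder)>c_{\beta}\right)\leq \alpha.
$$
This handles the base case $n=1$. For the inductive step, I would apply the Markov property at time $c_{\beta}$. For any $l\in\pladder$,
$$
\P\!\left(R^{\beta,l}(\nladder)>nc_{\beta}\right)=\mathbb{E}\!\left[\mathbf{1}\{R^{\beta,l}(\nladder)>c_{\beta}\}\,\P\!\left(R^{\beta,U^{\beta,l}_{c_{\beta}}}(\nladder)>(n-1)c_{\beta}\right)\right].
$$
On the event $\{R^{\beta,l}(\nladder)>c_{\beta}\}$ the list $U^{\beta,l}_{c_{\beta}}$ belongs to $\st\setminus\nladder$, so I can bound the inner conditional probability by $\sup_{u\in\st\setminus\nladder}\P(R^{\beta,u}(\nladder)>(n-1)c_{\beta})$. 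Iterating this identity $n$ times (equivalently, a strong induction on $n$ in which the inductive hypothesis is the statement uniformly over all initial lists in $\st\setminus\nladder$) yields
$$
\sup_{u\in\st\setminus\nladder}\P\!\left(R^{\beta,u}(\nladder)>nc_{\beta}\right)\leq \alpha^{n},
$$
which in particular proves the desired bound for $l\in\pladder$.

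I expect the main (and essentially only) obstacle to be the symmetry/uniformity step: to iterate the one-step inequality along arbitrary trajectories one needs the $n=1$ bound to hold \emph{uniformly} in the starting list, not just for $l\in\pladder$. This is precisely what Lemma \ref{lemmasup} supplies, because $\rho_u\leq 1$ lets us drop the dependence on $u$ in the limit. All other steps are routine Markov-property manipulations. Note that the lower bound $c_{\beta}\geq C_1 e^{\beta}$ from Proposition \ref{cbetabound} is not needed here; it is already absorbed into the convergence statement of Lemma \ref{lemmasup} that is being invoked.
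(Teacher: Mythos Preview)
Your proposal is correct and follows essentially the same approach as the paper: use Lemma \ref{lemmasup} at $t=1$ together with $\rho_u\le 1$ to get a uniform one-step bound $\sup_{u\in\st\setminus\nladder}\P(R^{\beta,u}(\nladder)>c_\beta)\le\alpha$ for some $\alpha\in(e^{-1},1)$ and all large $\beta$, then iterate via the Markov property. The only cosmetic difference is that the paper conditions at time $(n-1)c_\beta$ (peeling off the last unit, so the recursion stays with the fixed starting point $l$), whereas you condition at time $c_\beta$ (peeling off the first unit, which forces you to carry the uniform-in-$u$ statement through the induction); both variants are equivalent and equally valid.
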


\begin{proof}
Taking $t=1$ on Lemma \ref{lemmasup}, we have that for any fixed $\gamma \in (e^{-1},1)$,  there exists $\beta_{\gamma}$ such that for all $\beta>\beta_{\gamma}$ and for any $u \notin \nladder$, 
\begin{equation} \label{p7eq1}
\P\left(\frac{R^{\beta,u}(\nladder)}{c_{\beta}}>1 \right)\leq \gamma <1.
\end{equation}
 For any $l \in \pladder$ and for any $n \in \{2,3,\ldots\}$, by Markov property,
$$
\P\left(\frac{R^{\beta,l}(\nladder)}{c_{\beta}}>n \right)=\sum_{u \in \st \setminus \nladder} \P\left(\frac{R^{\beta,l}(\nladder)}{c_{\beta}}>n-1,U_{c_{\beta}(n-1)}^{\beta,l}=u \right)\P\left(\frac{R^{\beta,u}(\nladder)}{c_{\beta}}>1\right).
$$
Equation \eqref{p7eq1} implies that for any $\beta >\beta_{\gamma}$,
\begin{equation}\label{indu}
\P\left(\frac{R^{\beta,l}(\nladder)}{c_{\beta}}>n\right)\leq %\gamma^{-1}\sum_{u \in \st \setminus \nladder} \P\left(\frac{R^{\beta,l}(\nladder)}{c_{\beta}}>n-1,U_{c_{\beta}(n-1)}^{\beta,l}=u \right)\leq 
\gamma\P\left(\frac{R^{\beta,l}(\nladder)}{c_{\beta}}>n-1\right).
\end{equation}
We finish the proof by iterating \eqref{indu}.
\end{proof}

\begin{proof} We will now prove Theorem \ref{metastable}.

First of all, we will prove that for any $l \in \pladder$, the following holds
\begin{equation} \label{p1t3}
\frac{R^{\beta,l}(\nladder)}{\mathbb{E}[R^{\beta,l}(\nladder)]}\to \text{Exp}(1) \text{ in distribution, as } \beta \to +\infty.
\end{equation}
Considering Proposition  \ref{explim2}, we only need to show that
$$
\lim_{\beta \to +\infty}\frac{\mathbb{E}[R^{\beta,l}(\nladder)]}{c_{\beta}}=1.
$$
Actually, 
$$
\lim_{\beta \to +\infty}\frac{\mathbb{E}[R^{\beta,l}(\nladder)]}{c_{\beta}}=\lim_{\beta \to +\infty}\int_0^{+\infty}\P(R^{\beta,l}(\nladder)>c_{\beta}s)ds.
$$
By Lemma \ref{prop7}, for any $\beta > \beta_{\gamma}$,
$$
\int_0^{+\infty}\P(R^{\beta,l}(\nladder)>c_{\beta}s)ds \leq \sum_{n=0}^{+\infty}\P(R^{\beta,l}(\nladder)>c_{\beta}n) \leq \sum_{n=0}^{+\infty}\gamma^n <+\infty.
$$
Therefore, the Dominated Convergence Theorem allow us to put the limit inside the integral as follows
$$
\lim_{\beta \to +\infty}\int_0^{+\infty}\P(R^{\beta,l}(\nladder)>c_{\beta}s)ds=\int_0^{+\infty}\lim_{\beta \to +\infty}\P(R^{\beta,l}(\nladder)>c_{\beta}s)ds = \int_0^{+\infty}e^{-s}ds=1.
$$
This and Proposition \ref{explim2} imply \eqref{p1t3}.

%To prove Part $2$ of Theorem \ref{metastable}, we first observe that f
For any $\beta\geq 0$, for any $u \in \pcons$ and for any $s>0$,
$$
\P(R^{\beta, u}(\ncons)\geq c_{\beta}s)=
\P(R^{\beta, u}(\ncons)\geq c_{\beta}s , E_{\beta,u})+\P(R^{\beta, u}(\ncons)\geq c_{\beta}s, E_{\beta,u}^c),
$$
where
$$
E_{\beta,u}=%\bigcup_{u' \in \ncons}
\{ R^{\beta, u}(\pladder)<\min\{1, R^{\beta, u}(\ncons)\},%U_{R^{\beta,u}(\ncons)}^{\beta,u}=u',
R^{\beta, u}(\nladder)<R^{\beta, u}(\ncons)+1 \}.
$$
%To conclude the proof, observe that for 
Note that the event $E_{\beta,u}$ implies that
$$
|R^{\beta, u}(\ncons)-(\inf\{t>R^{\beta, u}(\pladder): U_t^{\beta,u}\in\nladder\} - R^{\beta, u}(\pladder))|\leq 1.
$$
Therefore, by Markov property and by the symmetric properties of the process, for any $l \in \pladder$, we have that 
\begin{equation} \label{eqt3}
\P\left(\frac{R^{\beta, l}(\nladder)}{c_{\beta}}\geq s+\frac{1}{c_{\beta}}\right)\P\left(E_{\beta,u}\right)\leq \P(R^{\beta, u}(\ncons)\geq c_{\beta}s , E_{\beta,u}) \leq \P\left(\frac{R^{\beta, l}(\nladder)}{c_{\beta}}\geq s-\frac{1}{c_{\beta}}\right)\P\left(E_{\beta,u}\right).
\end{equation}
By Theorem \ref{fastconsensus} and Remark \ref{remarkteo2}, for any $u \in \pcons$,
$$
\lim_{\beta \to +\infty}\P(E_{\beta,u})=1,
$$
and then,
%$$
%\lim_{\beta \to +\infty}\P\left(\frac{R^{\beta, l}(\nladder)}{c_{\beta}}\geq s+\frac{1}{c_{\beta}},E_{\beta,u}\right)=\lim_{\beta \to +\infty}\P\left(\frac{R^{\beta, l}(\nladder)}{c_{\beta}}\geq s+\frac{1}{c_{\beta}}\right),
%$$
$$
\lim_{\beta \to +\infty}\P(R^{\beta, u}(\ncons)\geq c_{\beta}s , E_{\beta,u})=\lim_{\beta \to +\infty}\P(R^{\beta, u}(\ncons)\geq c_{\beta}s).
$$
%$$
%\lim_{\beta \to +\infty}\P\left(\frac{R^{\beta, l}(\nladder)}{c_{\beta}}\geq s-\frac{1}{c_{\beta}},E_{\beta,u}\right)=\lim_{\beta \to +\infty}\P\left(\frac{R^{\beta, l}(\nladder)}{c_{\beta}}\geq s-\frac{1}{c_{\beta}}\right).
%$$

%Observe that for any $u \in \pcons \setminus \pladder$, any $l \in \pladder$ and any $u' \in \ncons \setminus \nladder$, the distribution function of the random times $R^{\beta, u}(\pladder)$, $R^{\beta, l}(\ncons)$ and $R^{\beta, u'}(\pladder)$ are continuous, since they all are convolutions of a finite number of exponential random times randomly chosen on countable sets of possible paths.

By Proposition \ref{cbetabound},
$
\displaystyle\lim_{\beta \to +\infty}c_{\beta}^{-1}=0.
$
Therefore, Remark \ref{remark1} implies that %the continuity of the distribution function of the random times implies that
$$
\lim_{\beta \to +\infty} \P\left(\frac{R^{\beta, l}(\nladder)}{c_{\beta}}\geq s+\frac{1}{c_{\beta}}\right)=\lim_{\beta \to +\infty}\P\left(\frac{R^{\beta, l}(\nladder)}{c_{\beta}}\geq s-\frac{1}{c_{\beta}}\right)=e^{-s}.
$$
Putting this together with \eqref{eqt3}, we conclude that for any $u \in \pcons$,
$$
\frac{R^{\beta,u}(\ncons)}{c_{\beta}}\to \text{Exp}(1) \text{ in distribution, as } \beta \to +\infty.
$$
To finish the proof, just note that the Dominated Convergence Theorem allow us to replace $c_{\beta}$ by $\mathbb{E}[R^{\beta, u}(\ncons)]$ as we did to prove that Equation \eqref{p1t3} holds.
%as in the conclusion of the proof of the Part 1 of Theorem \ref{metastable}.

\end{proof}

\begin{remark} \label{remark}
Since for any $u \in \pcons$
$$
\lim_{\beta \to +\infty}\frac{\mathbb{E}[R^{\beta, u}(\ncons)]}{c_{\beta}}=1,
$$
Proposition \ref{cbetabound} implies that 
$$
\lim_{\beta \to +\infty}\frac{\mathbb{E}[R^{\beta, u}(\ncons)]}{C_1 e^{\beta}}\geq 1.
$$
In particular, this implies that $\displaystyle \lim_{\beta \to \infty}\mathbb{E}[R^{\beta, u}(\ncons)]=+\infty$.

%There exists  $C_1>0$ such that for any $\beta %\geq 0$,
%$$
%c_{\beta} \geq C_1e^{\beta}.
%$$
\end{remark}

\section{Proof of Theorem \ref{fastconsensus2}}
\label{sec:robot}

%We start this Section with the following Remark.

%To prove Theorem \ref{fastconsensus2}, we need to extend the notation introduced before.

%\begin{itemize}
%\item 

The proof of Theorem \ref{fastconsensus2} uses the same strategy already used to prove Theorem \ref{fastconsensus}.  
Let $(\hat{T}_n: n\geq 1)$ be the jumping times of the process $(\hat{U}_t^{\beta,u})_{t\in [0,+\infty)}$
and let $((\hat{A}_n,\hat{O}_n):n\geq 1)$ be the sequence of pairs associated to them. As for the case without robot, $\hat{O}_n$ is the opinion expressed at time $\hat{T}_n$, but $\hat{A}_n$ can be either an actor belonging to $\A$ or the robot, i.e., $\hat{A}_n \in \A \cup \{*\}$. 

For any initial list $u \in \str$ and for any $n \geq 1$, we define the event
$$
\hat{M}_n^u \mydef
\begin{cases}
H_n\left(\hat{U}_{\hat{T}_{n-1}}^{\beta,u}\right) &\text{, if }  \max\{|\hat{U}_{\hat{T}_{n-1}}^{\beta,u}(a)|: a \in \A\} > \strength, \\
H_n\left(\hat{U}_{\hat{T}_{n-1}}^{\beta,u}\right)\cup \{\hat{A}_n=*, \hat{O}_n=+1\} &\text{, if }  \max\{|\hat{U}_{\hat{T}_{n-1}}^{\beta,u}(a)|: a \in \A\} = \strength, \\
\{\hat{A}_n=*, \hat{O}_n=+1\} &\text{, if }  \max\{|\hat{U}_{\hat{T}_{n-1}}^{\beta,u}(a)|: a \in \A\} < \strength, 
\end{cases}
$$
where
$$
H_n\left(\hat{U}_{\hat{T}_{n-1}}^{\beta,u}\right) \mydef \left\{\hat{A}_n \in \text{argmax}\{|\hat{U}_{\hat{T}_{n-1}}^{\beta,u}(a)|: a \in \A\} \text{\ and \ } \hat{O}_n\hat{U}_{\hat{T}_{n-1}}^{\beta,u}(\hat{A}_n) \geq 0\right\}. 
$$
$\hat{M}_n$ is the event in which the most likely choice of the pair $(\hat{A_n}, \hat{O_n})$ occurs.

For any $\beta \geq 0$ and for any $\strength >0$, the invariant probability measure of the embedded process $(\hat{U}^{\beta,u}_{\hat{T}_n})_{n \geq 0}$ will be denoted $\tilde{\nu}^{\beta, \strength}$.
%The set of extended positive consensus list is defined as 
%$$
%\pconsr \mydef \{u \in \str: u \neq \vec{0}, u(a) \geq 0 \text{, for all } a \in \A\}.
%$$
For any initial list $u \in \str$, let
$$
\hat{\tau}(u)\mydef \inf\{n\geq1: \hat{A}_n \in \{\hat{A}_1,..., \hat{A}_{n-1}\} \cup\{a \in \A: u(a)=0\} \cup \{*\}\}.
$$

Proposition \ref{teo:m1m2} is a version of Proposition \ref{teo:m1m} for the model with a robot.

\begin{proposition} \label{teo:m1m2}
\begin{enumerate}
\item[]
    \item For any $\beta \geq 0$, for any $\strength >0$, for any initial list $u \in \str$ and for any $m \geq 1$,
$$
\P\left(\bigcap_{j=1}^m \hat{M}_j^u\right) \geq (\hat{\zeta}_{\beta})^m,
$$
where
$$
\hat{\zeta}_{\beta} \mydef \frac{e^{\beta (1-\gamma)}}{e^{\beta (1-\gamma)}+e^{-\beta (1-\gamma)}+2N},
$$
with $\gamma=\max\{\lceil \strength\rceil - \strength, \strength-\lfloor \strength\rfloor\}$.
\
%\item %all Assume that $\mathcal{V}_{\cdot \to a}=\A\setminus \{a\}$, for any $a\in \A$. Then, f
%If $\strength < N-1$,  %$\strength \leq N-2$, 
%for any $u \in \str$,
%$$
%\P\left(\hat{U}^{\beta,u}_{\hat{T}_{4N-2}} \in \ladder\ \Big| \bigcap_{j=1}^{4N-2}\hat{M}_j^u \right)=1.
%$$

%\item If $\strength = N-1$, %$\strength \geq N$, 
%for any $u \in \str$,
%$$
%\P\left(\hat{U}^{\beta,u}_{\hat{T}_{5N}} \in \pladderr \cup \nladder\ \Big| \bigcap_{j=1}^{5N}\hat{M}_j^u \right)=1.
%$$

\item %all Assume that $\mathcal{V}_{\cdot \to a}=\A\setminus \{a\}$, for any $a\in \A$. Then, f
If $\strength > N-1$, %$\strength \geq N$, 
for any $u \in \str$,
$$
\P\left(\hat{U}^{\beta,u}_{\hat{T}_{2\lfloor \strength \rfloor+3N+2}} \in \pladderr\ \Big| \bigcap_{j=1}^{2\lfloor \strength \rfloor+3N+2}\hat{M}_j^u \right)=1.
$$

\end{enumerate}
\end{proposition}

\begin{proof}
The proof of  Part 1 of Proposition \ref{teo:m1m2} can be done exactly as we did to prove Part 1 of Proposition \ref{teo:m1m} in Section \ref{sec:auxiliary}.

To prove Part 2 of Proposition \ref{teo:m1m2} we first 
observe that $\hat{\tau}(u) \leq N+1$, by definition.
Therefore, the Markov property implies the following inequality
$$
\P\left(\hat{U}^{\beta,u}_{\hat{T}_{2\lfloor \strength \rfloor+3N+2}} \in \pladderr\ \Big| \bigcap_{j=1}^{2\lfloor \strength \rfloor+3N+2}\hat{M}_j^u \right)
\geq
%$$
%$$
\P\left(\bigcap_{a \in \A}\{\hat{U}_{\hat{T}_{\hat{\tau}(u)}}^{\beta,u}(a) > -\strength  \} \ \Big| \ \displaystyle\bigcap_{j=1}^{\hat{\tau}(u)}\hat{M}_j^u \right) \times
$$
$$
\P\left(\hat{U}_{\hat{T}_{3\lfloor \strength \rfloor+2N+2}}^{\beta,u} \in \pladderr \ \Big| \ \displaystyle\bigcap_{j=\hat{\tau}(u)+1}^{3\lfloor \strength \rfloor+2N+2}\hat{M}_j^u \cap \bigcap_{a \in \A}\{\hat{U}_{\hat{T}_{\hat{\tau}(u)}}^{\beta,u}(a) > -\strength  \}\right).
$$
We will show that the two terms on the right-hand side of the equation above are equal to $1$.

Note that
%$$
%\hat{M}_1^u \cap \{A_1\neq *\} \subset \{|u(A_1)| \geq \strength\}.
%$$
%Therefore, 
$$
\displaystyle\bigcap_{j=1}^{\hat{\tau}(u)}\hat{M}_j^u \cap \{\hat{A}_{\hat{\tau}(u)} \neq * \} \subset \{|\hat{U}^{\beta,u}_{T_{\hat{\tau}(u)-1}}(\hat{A}_{\hat{\tau}(u)})| \geq \strength\}.
$$
At the same time, following the arguments of Lemma \ref{lemma1m1m}, we conclude that
\begin{equation} \label{eqproprobot}
\displaystyle\bigcap_{j=1}^{\hat{\tau}(u)}\hat{M}_j^u \cap \{\hat{A}_{\hat{\tau}(u)} \neq * \} \subset \{|\hat{U}^{\beta,u}_{T_{\hat{\tau}(u)-1}}(\hat{A}_{\hat{\tau}(u)})| \leq N-1\}.
\end{equation}
%$|\hat{U}^{\beta,u}_{T_{\hat{\tau}(u)-1}}(A_n)| \leq N-1$. 
Therefore, if $\strength > N-1$, then 
$$
\displaystyle\bigcap_{j=1}^{\hat{\tau}(u)}\hat{M}_j^u \cap \{\hat{A}_{\hat{\tau}(u)} \neq * \}\subset\{|\hat{U}^{\beta,u}_{T_{\hat{\tau}(u)-1}}(\hat{A}_{\hat{\tau}(u)})| \leq N-1\}\cap \{|\hat{U}^{\beta,u}_{T_{\hat{\tau}(u)-1}}(\hat{A}_{\hat{\tau}(u)})| \geq \strength\}=\emptyset.
$$ As a consequence, 
$$\displaystyle\bigcap_{j=1}^{\hat{\tau}(u)}\hat{M}_j^u = \displaystyle\bigcap_{j=1}^{\hat{\tau}(u)}\hat{M}_j^u \cap \{\hat{A}_{\hat{\tau}(u)} = * \}.$$

%To prove Part 2 of Proposition \ref{teo:m1m2}, we first note that the event 
%$\displaystyle\bigcap_{j=1}^{\hat{\tau}(u)}\hat{M}_j^u \cap \{\hat{A}_{\hat{\tau}(u)} \neq * \}$ is impossible to occur in the case  $\strength > N-1$.

%As a consequence, 
%$$
%\displaystyle\bigcap_{j=1}^{\hat{\tau}(u)}\hat{M}_j^u \cap \{\hat{A}_{\hat{\tau}(u)} = * \}
%$$

%For any initial list $u \in \str$, 

Therefore,
$$
\P\left(\bigcap_{a \in \A}\{\hat{U}_{\hat{T}_{\hat{\tau}(u)}}^{\beta,u}(a) > -\strength  \} \ \Big| \ \displaystyle\bigcap_{j=1}^{\hat{\tau}(u)}\hat{M}_j^u \right)=
$$
$$
\P\left(\bigcap_{a \in \A}\{\hat{U}_{\hat{T}_{\hat{\tau}(u)}}^{\beta,u}(a) > -\strength  \} \ \Big| \ \displaystyle\bigcap_{j=1}^{\hat{\tau}(u)}\hat{M}_j^u \cap \{\hat{A}_{\hat{\tau}(u)} = * \} \right).
$$
%If the event $\displaystyle\bigcap_{j=1}^{\hat{\tau}(u)}\hat{M}_j^u \cap \{\hat{A}_{\hat{\tau}(u)} = * \}$ occurs, then
%\begin{equation*} %\label{eqprop2}
%\hat{U}_{\hat{T}_{\hat{\tau}(u)}}^{\beta,u}(a) > -\strength \text{, for all } a \in \A. 
%\end{equation*}
By definition, 
$$
\bigcap_{j=1}^{\hat{\tau}(u)}\hat{M}_j^u \cap \{\hat{A}_{\hat{\tau}(u)} = * \} \subset \bigcap_{a \in \A}\{-\strength \leq \hat{U}_{\hat{T}_{\hat{\tau}(u)-1}}^{\beta,u}(a) \leq \strength\}.
$$
Putting it together with the fact that 
$\hat{A}_{\hat{\tau}(u)} = * $ implies that $\hat{O}_{\hat{\tau}(u)} = +1$, we conclude that 
$$
\P\left(\bigcap_{a \in \A}\{\hat{U}_{\hat{T}_{\hat{\tau}(u)}}^{\beta,u}(a) > -\strength  \} \ \Big| \ \displaystyle\bigcap_{j=1}^{\hat{\tau}(u)}\hat{M}_j^u \cap \{\hat{A}_{\hat{\tau}(u)} = * \} \right)=1.
$$

%Observe that for any $m\geq 0$,
%$$
%\bigcap_{a \in \A}\{\hat{U}_{\hat{T}_{m}}^{\beta,u}(a) > -\strength \} \cap M_m^u \subset \bigcap_{a \in \A}\{\hat{U}_{\hat{T}_{m+1}}^{\beta,u}(a) > -\strength \}.
%$$
%Therefore, we can replace $\hat{\tau}(u)$ by $N+1$ in the equality above. 
%Since we have $N$ actors, it follows that 
%\begin{equation} \label{tauhat}
%\hat{\tau}(u) \leq N+1. 
%\end{equation}

With arguments similar to those used to prove Proposition \ref{teo:m1m}, we prove that
%We can prove similarly as we did to prove Proposition \ref{teo:m1m} that %for any $u \in \str$ such that $\min\{u(a): a\in \A\} > -\strength$,
$$
\P\left(\hat{U}_{\hat{T}_{\hat{\tau}(u)+2\lfloor \strength \rfloor+2N+1}}^{\beta,u} \in \pladderr \ \Big| \ \displaystyle\bigcap_{j=1}^{2\lfloor \strength \rfloor+2N+1}\hat{M}_j^u \cap \bigcap_{a \in \A}\{\hat{U}_{\hat{T}_{\hat{\tau}(u)}}^{\beta,u}(a) > -\strength  \}\right)=1.
$$
%if $\displaystyle\bigcap_{j=1}^{2\lfloor \strength \rfloor+2N+1}\hat{M}_j^u$ 
%occurs,
%then $\hat{U}_{\hat{T}_{2\lfloor \strength \rfloor+2N+1}}^{\beta,u} \in \pladderr$.
%Putting all this together %with \eqref{tauhat}, 
%we 
This concludes the proof of Part 2 of Proposition \ref{teo:m1m2}.

\end{proof}

Now, we prove Theorem \ref{fastconsensus2}.

\begin{proof}

Using the same strategy used to prove Theorem \ref{fastconsensus}, we first observe that
%note that for any $\strength\geq N-1$,
$$
\nu^{\beta, \strength}(\pladderr)
%=\sum_{u \in \pladderr}\frac{\tilde{\nu}^{\beta, \strength}(u)}{\hat{q}_{\beta}(u)}\left(\sum_{u' \in \str}\frac{\tilde{\nu}^{\beta, \strength}(u')}{\hat{q}_{\beta}(u')}\right)^{-1}
\geq
\frac{1}{1+ \displaystyle\frac{%\displaystyle
(2N+1)e^{\beta (\lfloor \strength \rfloor +1)}}{\tilde{\nu}^{\beta, \strength}(\pladderr)}\sum_{u' \not\in \pladderr}\frac{\tilde{\nu}^{\beta, \strength}(u')}{\hat{q}_{\beta}(u')}},
$$
where for any $u \in \str$,
$$
\hat{q}_{\beta}(u) \mydef \sum_{a \in \A}(e^{\beta u(a)}+e^{-\beta u(a)})+ e^{\beta \strength}.
$$
If $\max\{|u(a)|: a \in \A\} \geq \lfloor \strength \rfloor +2$, then $\hat{q}_{\beta}(u) \geq e^{\beta (\lfloor \strength \rfloor +2)}$, and therefore,
$$
\frac{\tilde{\nu}^{\beta, \strength}(u)}{\hat{q}_{\beta}(u)} \leq \tilde{\nu}^{\beta, \strength}(u)e^{-\beta (\lfloor \strength \rfloor +2)}.
$$

In the case $u \not\in \pladderr$ and $\max\{|u(a)|: a \in \A\} <\lfloor \strength \rfloor +2$, using Proposition \ref{teo:m1m2} we can prove that there exists $\hat{C}'=\hat{C}'(N)>0$ such that $\tilde{\nu}^{\beta, \strength}(u) \leq \hat{C}' e^{-\beta \strength}$ exactly as we did to prove Proposition \ref{decay} in Section \ref{sec: fastconsensus}.
In this case, it follows that
%Therefore, it follows that for any $u \not\in \pladderr$ such that $\max\{|u(a)|: a \in \A\} <\lfloor \strength \rfloor +2$, we have that 
$$
\frac{\tilde{\nu}^{\beta, \strength}(u)}{\hat{q}_{\beta}(u)} \leq \hat{C}' e^{-\beta (\lfloor \strength \rfloor +2)}.
$$
From this point on, the  proof of Part 2.1 of Theorem \ref{fastconsensus2} can be done exactly as we did to prove Part 1 of Theorem \ref{fastconsensus}.

Considering Proposition \ref{teo:m1m2}, the proof of Part 2.2 of Theorem \ref{fastconsensus2} can be done similarly as we did
to prove Part 2 of 
Theorem \ref{fastconsensus}.

\end{proof}

\section{Discussion}

Our model describes the time evolution of a ``social bubble''. By this we mean a group of highly connected social actors and disconnected from the actors outside the group. Theorems \ref{fastconsensus}, \ref{metastable} and \ref{fastconsensus2} rigorously analyze the behavior of such a system of interacting social actors in a highly polarized situation.

One of the main features of our model is the disposition of each actor to always take into account the opinion of the other actors in the network. We conjecture that this is a typical feature of a social network in which all the actors have similar psychographic profiles. In a highly polarized situation, this feature makes our model a  kind of ``consensus building machine''.

The high polarization in our model of social network is compatible with the ``filter bubble'' hypothesis introduced by \cite{pariser}. In a nutshell, this hypothesis links network polarization with algorithmic filtering of ideas and opinions. This creates ``echo chambers'', amplifying opinions within the network \cite{bubble}.

The Brazilian 2018 and 2022 elections served as an inspiration for our model and suggested the following question: is social media campaigning enough to change in a quite short period of time the voting intention of a significant portion of voters?

The results described by Theorems \ref{fastconsensus}, \ref{metastable} and \ref{fastconsensus2} seem to reproduce qualitatively the fast modification of the voting intentions observed before the first rounds of the 2018 and 2022 presidential elections in Brazil. 
However, a more detailed statistical analysis must be done to support the conjecture that our model offers an adequate explanation for the recent Brazilian electoral dynamics.

\section*{Acknowledgments}
This work is part of USP project {\em Mathematics, computation, language and the brain} and FAPESP project {\em Research, Innovation and Dissemination Center for Neuromathematics} (grant 2013/07699-0).  AG was partially supported by CNPq fellowship (grant 314836/2021-7). KL was successively supported  by Capes fellowship (grant 88887.340896/2019-00), CNPq  fellowship (grant 164080/2021-0) and FAPESP fellowship (grants 2022/07386-0 and 2023/12335-9).

We thank M. Cassandro, M. Galves, J. A. Peschanski, R. Prôa and F. Penafiel for many interesting discussions. Finally, we thank two anonymous reviewers for their comments and suggestions.

\bibliographystyle{apalike}
\bibliography{bibliografia}

\begin{thebibliography}{}

\bibitem[Aldous, 2013]{aldous}
Aldous, D. (2013).
\newblock Interacting particle systems as stochastic social dynamics.
\newblock {\em Bernoulli}, 19(4):1122--1149.

\bibitem[André, 2019]{mandre1}
André, M. (2019).
\newblock A result of metastability for an infinite system of spiking neurons.
\newblock {\em Journal of Statistical Physics}, 177(5):984–1008.

\bibitem[André, 2023]{mandre3}
André, M. (2023).
\newblock Convergence of the temporal averages of a metastable system of spiking neurons.
\newblock {\em Stochastic Processes and their Applications}, 157:42--68.

\bibitem[André and Planche, 2021]{mandre2}
André, M. and Planche, L. (2021).
\newblock The effect of graph connectivity on metastability in a stochastic system of spiking neurons.
\newblock {\em Stochastic Processes and their Applications}, 131:292--310.

\bibitem[Baccelli et~al., 2022]{baccelli3}
Baccelli, F., Davydov, M., and Taillefumier, T. (2022).
\newblock Replica-mean-field limits of fragmentation-interaction-aggregation processes.
\newblock {\em Journal of Applied Probability}, page 1–22.

\bibitem[Baccelli and Taillefumier, 2019]{baccelli1}
Baccelli, F. and Taillefumier, T. (2019).
\newblock Replica-mean-field limits for intensity-based neural networks.
\newblock {\em SIAM Journal on Applied Dynamical Systems}, 18(4):1756--1797.

\bibitem[Baccelli and Taillefumier, 2021]{baccelli2}
Baccelli, F. and Taillefumier, T. (2021).
\newblock The pair-replica-mean-field limit for intensity-based neural networks.
\newblock {\em SIAM Journal on Applied Dynamical Systems}, 20(1):165--207.

\bibitem[Belli, 2018]{conversation}
Belli, L. (2018).
\newblock {WhatsApp skewed Brazilian election, showing social media’s danger to democracy }.
\newblock {\em The Conversation}.

\bibitem[Branco, 2018]{exame}
Branco, L. (2018).
\newblock As grandes surpresas das eleições 2018.
\newblock {\em Exame}.

\bibitem[Brochini et~al., 2016]{lud}
Brochini, L., Costa, A.~A., Abadi, M., Roque, A.~C., Stolfi, J., and Kinouchi, O. (2016).
\newblock Phase transitions and self-organized criticality in networks of stochastic spiking neurons.
\newblock {\em Scientific Reports}, 6:35831.

\bibitem[Cassandro et~al., 1984]{cassandro}
Cassandro, M., Galves, A., Olivieri, E., and Vares, M.~E. (1984).
\newblock Metastable behavior of stochastic dynamics: A pathwise approach.
\newblock {\em Journal of Statistical Physics}, 35:603--634.

\bibitem[Castellano et~al., 2009]{socialdynamics}
Castellano, C., Fortunato, S., and Loreto, V. (2009).
\newblock Statistical physics of social dynamics.
\newblock {\em Rev. Mod. Phys.}, 81:591--646.

\bibitem[Chariker and Lebowitz, 2022]{lebow}
Chariker, L. and Lebowitz, J.~L. (2022).
\newblock Time evolution of a mean-field generalized contact process.
\newblock {\em Journal of Statistical Mechanics: Theory and Experiment}, 2022(2):023502.

\bibitem[Chitra and Musco, 2020]{bubble}
Chitra, U. and Musco, C. (2020).
\newblock Analyzing the impact of filter bubbles on social network polarization.
\newblock In {\em Proceedings of the 13th International Conference on Web Search and Data Mining}, WSDM '20, page 115–123, New York, NY, USA. Association for Computing Machinery.

\bibitem[De~Masi et~al., 2014]{gl4}
De~Masi, A., Galves, A., Löcherbach, E., and Presutti, E. (2014).
\newblock Hydrodynamic limit for interacting neurons.
\newblock {\em Journal of Statistical Physics}, 158(4):866–902.

\bibitem[{De Santis} et~al., 2022]{desantis}
{De Santis}, E., Galves, A., Nappo, G., and Piccioni, M. (2022).
\newblock Estimating the interaction graph of stochastic neuronal dynamics by observing only pairs of neurons.
\newblock {\em Stochastic Processes and their Applications}, 149:224--247.

\bibitem[Duarte et~al., 2019]{ostduarte3}
Duarte, A., Galves, A., Löcherbach, E., and Ost, G. (2019).
\newblock Estimating the interaction graph of stochastic neural dynamics.
\newblock {\em Bernoulli}, 25(1):771--792.

\bibitem[Duarte and Ost, 2016]{ostduarte1}
Duarte, A. and Ost, G. (2016).
\newblock A model for neural activity in the absence of external stimuli.
\newblock {\em Markov Processes and Related Fields}, 22:37--52.

\bibitem[Duarte et~al., 2015]{ostduarte2}
Duarte, A., Ost, G., and Rodríguez, A. (2015).
\newblock Hydrodynamic limit for spatially structured interacting neurons.
\newblock {\em Journal of Statistical Physics}, 161:1163--1202.

\bibitem[Fernández et~al., 2015]{metaest1}
Fernández, R., Manzo, F., Nardi, F., and Scoppola, E. (2015).
\newblock Asymptotically exponential hitting times and metastability: a pathwise approach without reversibility.
\newblock {\em Electronic Journal of Probability}, 20:1--37.

\bibitem[Ferrari et~al., 2018]{gl2}
Ferrari, P.~A., Galves, A., Grigorescu, I., and Löcherbach, E. (2018).
\newblock Phase transition for infinite systems of spiking neurons.
\newblock {\em Journal of Statistical Physics}, 172(6):1564–1575.

\bibitem[Fournier and Löcherbach, 2016]{evafour}
Fournier, N. and Löcherbach, E. (2016).
\newblock On a toy model of interacting neurons.
\newblock {\em Annales de l'Institut Henri Poincaré, Probabilités et Statistiques}, 52(4):1844--1876.

\bibitem[Franco, 2018]{bbc}
Franco, L. (2018).
\newblock {Eleições 2018: Os candidatos que estavam atrás nas pesquisas e foram levados por `onda Bolsonaro' ao 2º turno}.
\newblock {\em BBC News Brasil}.

\bibitem[Galves et~al., 2023]{gb}
Galves, A., L\"ocherbach, E., and Pouzat, C. (2023).
\newblock {\em {Probabilistic Spiking Neuronal Nets - Data, models and theorems}}.
\newblock Springer, Heidelberg, Germany.

\bibitem[Galves and Löcherbach, 2013]{glmodel}
Galves, A. and Löcherbach, E. (2013).
\newblock Infinite systems of interacting chains with memory of variable length.
\newblock {\em Journal of Statistical Physics}, 151(6):896--921.

\bibitem[Galves et~al., 2019]{gl3}
Galves, A., Löcherbach, E., Pouzat, C., and Presutti, E. (2019).
\newblock A system of interacting neurons with short term synaptic facilitation.
\newblock {\em Journal of Statistical Physics}, 178(4):869–892.

\bibitem[Hollander, 2009]{metaest3}
Hollander, F.~d. (2009).
\newblock {\em Three Lectures on Metastability Under Stochastic Dynamics}, pages 223--246.
\newblock Springer.

\bibitem[Holley and Liggett, 1975]{votermodel}
Holley, R.~A. and Liggett, T.~M. (1975).
\newblock Ergodic theorems for weakly interacting infinite systems and the voter model.
\newblock {\em The Annals of Probability}, 3(4):643--663.

\bibitem[Kac, 1947]{kac}
Kac, M. (1947).
\newblock On the notion of recurrence in discrete stochastic processes.
\newblock {\em Bulletin of the American Mathematical Society}, 53(10):1002 -- 1010.

\bibitem[Laxa, 2022]{laxa}
Laxa, K.~S. (2022).
\newblock Metastability in a stochastic system of spiking neurons with leakage.
\newblock {\em Journal of Statistical Physics}, 190(29).

\bibitem[Löcherbach and Monmarché, 2022]{evamonm}
Löcherbach, E. and Monmarché, P. (2022).
\newblock Metastability for systems of interacting neurons.
\newblock {\em Annales de l'Institut Henri Poincaré, Probabilités et Statistiques}, 58(1):343--378.

\bibitem[Mello, 2018]{mello1}
Mello, P.~C. (2018).
\newblock Empresários bancam campanha contra o {PT} pelo {WhatsApp}.
\newblock {\em Folha de São Paulo}.

\bibitem[Mello, 2019]{mello2}
Mello, P.~C. (2019).
\newblock {WhatsApp admite envio maciço ilegal de mensagens nas eleições de 2018 }.
\newblock {\em Folha de São Paulo}.

\bibitem[Nascimento, 2022]{amarcos}
Nascimento, A. M.~B. (2022).
\newblock Multiple phase transitions for an infinite system of spiking neurons.
\newblock {\em Journal of Statistical Physics}, 188(4).

\bibitem[Nicas, 2022]{nyt}
Nicas, J. (2022).
\newblock {Bolsonaro Outperforms Polls and Forces Runoff Against Lula in Brazil’s Presidential Election}.
\newblock {\em The New York Times}.

\bibitem[Olivieri and Vares, 2005]{metaest2}
Olivieri, E. and Vares, M.~E. (2005).
\newblock {\em Large Deviations and Metastability}.
\newblock Encyclopedia of Mathematics and its Applications. Cambridge University Press.

\bibitem[Pariser, 2011]{pariser}
Pariser, E. (2011).
\newblock {\em The filter bubble : what the Internet is hiding from you}.
\newblock Penguin Press, New York.

\bibitem[Phillips, 2022]{guardian}
Phillips, T. (2022).
\newblock {Brazilians shocked as Bolsonaro’s strong election showing defies expectations}.
\newblock {\em The Guardian}.

\bibitem[Wasserman and Faust, 1994]{wasserman1}
Wasserman, S. and Faust, K. (1994).
\newblock {\em Social Network Analysis: Methods and Applications}.
\newblock Structural Analysis in the Social Sciences. Cambridge University Press.

\bibitem[Yaginuma, 2016]{karina}
Yaginuma, K. (2016).
\newblock A stochastic system with infinite interacting components to model the time evolution of the membrane potentials of a population of neurons.
\newblock {\em Journal of Statistical Physics}, 163:642--658.

\bibitem[Yu and Taillefumier, 2022]{taille}
Yu, L. and Taillefumier, T.~O. (2022).
\newblock Metastable spiking networks in the replica-mean-field limit.
\newblock {\em PLOS Computational Biology}, 18(6).

\end{thebibliography}

\vspace{1cm}

\noindent
Antonio Galves \\ Deceased September 5, 2023 \\ Instituto de Matemática e Estatística \\ Universidade de São Paulo \\ Rua do Matão, 1010 \\ São Paulo-SP, 05508-090 \\ Brazil

\vspace{0.5cm}

\noindent
Kádmo de Souza Laxa \\ Faculdade de Filosofia, Ciências e Letras de Ribeirão Preto  \\ Universidade de São Paulo \\
Av. Bandeirantes, 3900\\
Ribeirão Preto-SP, 14040-901 \\
Brazil \\
e-mail address: \texttt{kadmo.laxa@usp.br}

\end{document}